\newtheorem{thm}{Theorem}[section]
\newtheorem{cor}[thm]{Corollary}
\newtheorem{lemma}[thm]{Lemma}
\newtheorem{prop}[thm]{Proposition}
\newtheorem{mainthm}[thm]{Theorem}
\theoremstyle{definition}
\newtheorem{defn}[thm]{Definition}
\newtheorem{rem}[thm]{Remark}
\newtheorem{ex}[thm]{Example}
\numberwithin{equation}{section}
\def\R{{\mathbb{R}}}
\def\N{{\mathbb{N}}}
\def\C{{\mathbb{C}}}
\def\Z{{\mathbb{Z}}}
\def\I{{\mathbb{I}}}
\def\D{{\mathbb{D}}}
\def\S{{\mathbb{S}}}
\newcommand{\T}{\mathbb{T}}
\newcommand{\surj}{\text{surj}}
\newcommand{\inj}{\text{inj}}
\begin{document}
\title[Homotopical Stable Ranks]{Homotopical Stable Ranks for Certain C*-Algebras}
\author[P. Vaidyanathan]{Prahlad Vaidyanathan}
\address{Department of Mathematics\\ Indian Institute of Science Education and Research Bhopal\\ Bhopal ByPass Road, Bhauri, Bhopal 462066\\ Madhya Pradesh. India.}
\email{prahlad@iiserb.ac.in}
\date{}

\subjclass[2010]{Primary 46L85; Secondary 46L80}
\keywords{Stable rank, Nonstable K-theory, C*-algebras}
\maketitle


\begin{abstract}
We study the general and connected stable ranks for C*-algebras. We estimate these ranks for pullbacks of C*-algebras, and for tensor products by commutative C*-algebras. Finally, we apply these results to determine these ranks for certain commutative C*-algebras and non-commutative CW-complexes.
\end{abstract}

\maketitle


Stable ranks for C*-algebras were first introduced by Rieffel \cite{rieffel} in his study of the nonstable K-theory of noncommutative tori. A stable rank of a C*-algebra $A$ is a number associated to the C*-algebra, and is meant to generalize the notion of covering dimension for topological spaces. The first such notion introduced by Rieffel, called \emph{topological stable rank}, has played an important role ever since. In particular, the structure of C*-algebras having topological stable rank one is particularly well understood. \\

Since the foundational work of Rieffel, many other ranks have been introduced for C*-algebras, including real rank, decomposition rank, nuclear dimension, etc. In this paper, we return to the original work of Rieffel, and consider two other stable ranks introduced by him: the \emph{connected stable rank} and \emph{general stable rank}. The general stable rank determines the stage at which stably free projective modules are forced to be free. The connected stable rank is a related notion, but its definition is less transparent. What links these two ranks, and differentiates them from the topological stable rank, is that they are homotopy invariant. \\

This was highlighted in a paper by Nica \cite{nica2}, where he emphasized the relationship between these two ranks, and how they differ from topological stable rank. Furthermore, in order to compute these ranks for various examples, he showed how they behave with respect to some basic constructions (matrix algebras, quotients, inductive limits, and extensions). \\

The goal of this paper is to extend these results by examining how the connected and general stable rank (together referred to as \emph{homotopical} stable ranks) behave with respect to iterated pullbacks and tensor products with commutative C*-algebras, and thereby calculate these ranks for some familiar C*-algebras. We mention here that the homotopical stable ranks play an important role in K-theory (\cite{rieffel2}, \cite{sheu}, \cite{xue}). Although we do not dwell on that much, we believe that further investigations into these ranks will yield a much better understanding of nonstable phenomena in K-theory.\\

We now describe our results. Henceforth, we write $tsr, gsr$, and $csr$ to denote the topological, general and connected stable ranks respectively. To begin with, consider a pullback diagram of unital C*-algebras
\[
\xymatrix{
A\ar[r]\ar[d] & B\ar[d]^{\delta} \\
C\ar[r]^{\gamma} & D
}
\]
where either $\gamma$ or $\delta$ is surjective. Note that $tsr(A)$ can be estimated by a theorem of Brown and Pedersen \cite[Theorem 4.1]{brown}
\[
tsr(A)\leq \max\{tsr(B),tsr(C)\}
\]
However, simple examples (See Example \ref{ex:sr_pullback}) show that the analogous estimate for homotopical stable ranks cannot hold. Instead, we show that
\begin{mainthm}\label{thm:main_pullback}
Given a pullback diagram as above,
\begin{equation*}
\begin{split}
gsr(A) &\leq \max\{csr(B),csr(C),gsr(C(\T)\otimes D))\}, \text{ and } \\
csr(A) &\leq \max\{csr(B),csr(C), csr(C(\T)\otimes D))\}
\end{split}
\end{equation*}
Furthermore, if $K_1(D) = 0$, the first estimate may be improved to
\[
gsr(A)\leq \max\{gsr(B),gsr(C),gsr(C(\T)\otimes D))\}
\]
\end{mainthm}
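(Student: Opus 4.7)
My plan is to prove all three estimates via a common strategy: given a unimodular tuple $(\vec b,\vec c)\in Lg_m(A)$ for $m$ at least the asserted bound, produce partial data over $B$ and $C$ separately, record the discrepancy between their two $D$-images as an element of $Lg_m(C(\T)\otimes D)$, use the hypothesis on $C(\T)\otimes D$ to trivialize this discrepancy, and finally pull the resulting correction back through the surjection (say $\delta$, by symmetry). The unifying mechanism is that the pullback identifies an element of $Lg_m(A)$ with a pair of unimodular tuples whose $D$-images coincide, and loops in $D$ are precisely elements of $C(\T)\otimes D$.

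For the $csr$ bound I would apply $csr(B), csr(C)\le m$ to produce paths $\vec b(t)\in Lg_m(B)$ and $\vec c(t)\in Lg_m(C)$ joining the standard tuple $(1,0,\dots,0)$ to $\vec b$ and $\vec c$ respectively. The images $\delta(\vec b(t))$ and $\gamma(\vec c(t))$ are two paths in $Lg_m(D)$ sharing both endpoints, and their concatenation is a loop in $Lg_m(D)$, i.e.\ an element of $Lg_m(C(\T)\otimes D)$. The assumption $csr(C(\T)\otimes D)\le m$ connects this loop to a constant loop in $Lg_m(C(\T)\otimes D)$; a standard argument converts this free null-homotopy into a based one, so the two $D$-level paths are homotopic rel endpoints. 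Surjectivity of $\delta$ then lets us lift this homotopy, inside the stabilizer of $e_1$, to a modification of $\vec b(t)$ satisfying $\delta(\vec b(t))=\gamma(\vec c(t))$ for every $t$; the reconciled pair is the desired path in $Lg_m(A)$.

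For the first $gsr$ bound the same construction is carried out at the invertible level: I use $csr(B), csr(C)\le m$ to extend $\vec b,\vec c$ to $U\in GL_m(B)_0$ and $V\in GL_m(C)_0$ together with explicit paths to the identity. The associated first-column concatenated loop in $Lg_m(D)$ is lifted, using $gsr(C(\T)\otimes D)\le m$, to an invertible loop in $GL_m(C(\T)\otimes D)$, which encodes the discrepancy $\delta(U)^{-1}\gamma(V)\in GL_m(D)_0$ as a product $\delta(X)\gamma(Y)^{-1}$ with $X\in GL_m(B),\,Y\in GL_m(C)$ each stabilizing $e_1$; replacing $U\mapsto UX,\,V\mapsto VY$ produces an invertible in $GL_m(A)$ with first column $(\vec b,\vec c)$. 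Under the extra hypothesis $K_1(D)=0$, we have $GL_m(D)=GL_m(D)_0$ in the stable range, so the discrepancy $\delta(U)^{-1}\gamma(V)$ lies automatically in $GL_m(D)_0$ regardless of whether $U,V$ are in the identity components; this is what allows us to weaken $csr(B), csr(C)$ to $gsr(B), gsr(C)$.

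The main obstacle is the lifting step---translating a homotopy (for the $csr$ statement) or an invertible loop (for the $gsr$ statement) over $D$ into a genuine modification of the data over $B$. This is not automatic, because $GL_m(B)\to GL_m(D)$ need not be a Serre fibration; instead the argument must rely on a concrete lifting-of-invertibles result for surjections of $C^*$-algebras in the stable range, together with careful tracking of the stabilizer of $e_1$ and the behaviour of connected components under $\delta$. I expect to isolate this lifting step as a preparatory lemma before the main proof, and to spend most of the technical work verifying that the chosen paths and invertibles can be arranged to lie in the correct stabilizers throughout the deformation.
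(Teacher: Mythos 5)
Your overall strategy — trivialize over $B$ and $C$ separately, record the discrepancy over $D$ as loop data (an element of $C(\T)\otimes D$), kill it using the hypothesis on $C(\T)\otimes D$, and push the correction back through the surjection — is indeed the skeleton of the paper's second proof (its Theorems on $gsr$ and $csr$ of pullbacks, which the paper then combines with the bounds $\inj_0(D)\leq gsr(\T D)$ and $\max\{\inj_0(D),\surj_1(D)\}\leq csr(\T D)$). But the two lifting steps you defer are not incidental technicalities; they are the actual content, and one of them is false in the form you state it. For the $csr$ bound you propose to lift a homotopy (rel endpoints) of paths of unimodular vectors through $\delta$, "inside the stabilizer of $e_1$", citing surjectivity of $\delta$. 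Surjectivity of $\delta$ gives no lifting at the level of $Lg_m$: the induced map $Lg_m(B)\to Lg_m(D)$ is in general neither surjective nor a fibration (already for $m=1$, $C(\D^2)\to C(\S^1)$, the function $z$ does not lift to a nonvanishing function on the disk), so neither paths nor homotopies of unimodular vectors can be lifted. The paper avoids this entirely by working with invertibles: it first produces $T\in GL_m(A)$ with $Tv=e_m$, and then shows $T$ can be chosen in $GL_m^0(A)$ by lifting a based loop $g$ in $GL_m(D)$ to a path $h$ in $GL_m(C)$ with $h(1)=I$ and $h(0)\in\theta(GL_{m-1}(C))$. That loop-lifting lemma is not a consequence of surjectivity alone: it needs the surjectivity of $\pi_1(GL_{m-1}(D))\to\pi_1(GL_m(D))$ (i.e.\ $\surj_1(D)\leq m$, extracted from $csr(\T D)\leq m$ via the Corach--Larotonda fibration $TL_m\to GL_m\to Lc_m$) together with a surjectivity argument for an auxiliary mapping-cylinder-type algebra $X_m(C)\to X_m(D)$ using Pedersen's pullback theorem. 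None of this ingredient appears in your outline, and without it the $csr$ estimate does not follow.

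For the $gsr$ bound the gap is smaller but still real: the step where transitivity of $GL_m(C(\T)\otimes D)$ on $Lg_m(C(\T)\otimes D)$ "encodes the discrepancy $\delta(U)^{-1}\gamma(V)$ as a product $\delta(X)\gamma(Y)^{-1}$ with $X,Y$ stabilizing $e_1$" is precisely what has to be proved. The correct route (the paper's) is: the discrepancy $S$ fixes the distinguished vector, hence has a corner $S'\in GL_{m-1}(D)$; transitivity of $GL_m(\T D)$ on $Lg_m(\T D)$ yields surjectivity of $\pi_1(GL_m(D))\to\pi_1(Lc_m(D))$, hence, by the fibration exact sequence, injectivity of $\pi_0(GL_{m-1}(D))\to\pi_0(GL_m(D))$ (this is the inequality $\inj_0(D)\leq gsr(\T D)$); this gives $S'\sim_h I$ in $GL_{m-1}(D)$, and one then lifts that path of invertibles through the surjection $\gamma$ (path-lifting of invertibles with prescribed endpoint, itself proved via a cone-algebra argument) to produce $Y$ with $\gamma(Y)=S$ and $Ye_1=e_1$, taking $X=I$. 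You would also need (or should sidestep, as the paper does via the matrix-pullback and $\ker\alpha\cap\ker\beta=\{0\}$) the claim that a compatible pair of unimodular vectors over $B$ and $C$ is unimodular over $A$, which is true when one leg is surjective but requires an argument. Your treatment of the $K_1(D)=0$ refinement is the right idea — with $\pi_0$-injectivity in the relevant range, $K_1(D)=0$ forces $GL_{m-1}(D)$ to be connected, so only $gsr(B),gsr(C)$ are needed — but it rests on the same unproven corner/lifting mechanism. In short: right architecture, but the proposal as written leaves the decisive lifting lemmas unproved, and the specific lifting you invoke for the $csr$ case does not exist.
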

We should mention here that the precise estimates are slightly finer than those mentioned above (See Proposition \ref{prop:inj_surj_bounds}), and they depend on specific information about the homotopy groups of the groups $GL_n(D)$ of invertible matrices over $D$. \\

We turn to tensor products by commutative C*-algebras. If $Y$ is a compact Hausdorff space, a projective module over $C(Y)$ corresponds to a vector bundle over $Y$. Furthermore, if $Y = \Sigma X$, the reduced suspension of $X$, then any vector bundle over $Y$ of rank $n$ corresponds to the homotopy class of a map from $X$ to $GL_n(\C)$. Building on work of Rieffel \cite{rieffel3}, we describe all projective modules over C*-algebras of the form $C(\Sigma X)\otimes A$ in an analogous fashion. We conclude that
\begin{mainthm}\label{thm:main_gsr_csxa}
For a compact Hausdorff space $X$ and a unital C*-algebra $A$,
\[
gsr(C(\Sigma X)\otimes A) = \max\{gsr(A), \inj_X(A)\}
\]
where $\inj_X(A)$ denotes the least $n\geq 1$ such that the map $[X,GL_{m-1}(A)]_{\ast}\to [X,GL_m(A)]_{\ast}$ is injective for all $m\geq n$.
\end{mainthm}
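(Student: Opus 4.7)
The plan is to prove both inequalities by combining the classification of projective modules over $B := C(\Sigma X) \otimes A$ (developed earlier in the paper, extending Rieffel's \cite{rieffel3}) with Nica's behavior of $gsr$ under quotients \cite{nica2}. Set $N := \max\{gsr(A), \inj_X(A)\}$.

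For the upper bound $gsr(B) \leq N$, I would fix $m \geq N$ and take a projective $B$-module $P$ with $P \oplus B \cong B^m$. Under the classification, $P$ is described by a pair $(P_0, [\phi])$, where $P_0$ is a projective $A$-module with $P_0 \oplus A \cong A^m$ and $[\phi]$ is the based homotopy class of a clutching function, so that $P$ is $B$-free exactly when $P_0$ is $A$-free and $[\phi]$ is the trivial class. Since $m \geq gsr(A)$, $P_0 \cong A^{m-1}$ is free, and so $[\phi] \in [X, GL_{m-1}(A)]_*$. Reading the identity $P \oplus B \cong B^m$ on the clutching side shows that the image of $[\phi]$ under the stabilization $[X, GL_{m-1}(A)]_* \to [X, GL_m(A)]_*$ is the trivial class. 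Since this stabilization is a homomorphism of groups, the hypothesis $m \geq \inj_X(A)$ forces $[\phi] = 0$, and hence $P \cong B^{m-1}$.

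For the lower bound $N \leq gsr(B)$, observe first that the evaluation $B \to A$ at the basepoint of $\Sigma X$ is surjective, whence \cite{nica2} gives $gsr(A) \leq gsr(B)$. For the remaining estimate set $n := \inj_X(A)$; the case $n = 1$ is trivial, and since $[X, GL_0(A)]_*$ is a point the case $n = 2$ cannot occur, so I may assume $n \geq 3$. By minimality of $n$, the stabilization $[X, GL_{n-2}(A)]_* \to [X, GL_{n-1}(A)]_*$ fails to be injective. Picking a nontrivial class $[g]$ in its kernel, the classification (run in reverse) produces a projective $B$-module $P_g$ with $P_g \oplus B \cong B^{n-1}$ but $P_g \not\cong B^{n-2}$, whence $gsr(B) \geq n$, as required.

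The main obstacle is the precise compatibility, under the classification, between the module-theoretic notions of \emph{free} and \emph{stably free} on the $B$-side and the homotopy-theoretic notions of \emph{trivial} and \emph{stably trivial clutching} on the $A$-side, with matching rank accounting (a $B$-module summand of $B^m$ of module rank $m-1$ corresponds to a clutching class in $[X, GL_{m-1}(A)]_*$). Once this dictionary is in place, the remaining argument is essentially bookkeeping, resting on the observation that each $[X, GL_k(A)]_*$ is a group so that basepoint-injectivity agrees with the full injectivity appearing in the definition of $\inj_X(A)$.
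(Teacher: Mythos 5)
Your proposal is correct and follows essentially the same route as the paper's own proof (Theorem \ref{thm:csxa}): both directions run through the clutching-function classification of projective modules over $C(\Sigma X)\otimes A$ (the lemmas preceding that theorem, which supply exactly the ``dictionary'' you flag as the main obstacle), with the upper bound using $gsr(A)$ to trivialize the fiber module and $m\geq \inj_X(A)$ to kill the clutching class, and the lower bound using the split evaluation onto $A$ plus a stably trivial but nontrivial clutching class to produce a stably free nonfree module. The only cosmetic difference is that you argue the $\inj_X(A)\leq gsr(C(\Sigma X)\otimes A)$ half contrapositively via a minimal non-injective stage (noting the group structure on $[X,GL_k(A)]_{\ast}$), whereas the paper argues it directly; the substance is identical.
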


We also obtain various estimates for $csr(C(X)\otimes A)$ as well which, once again, depend on the homotopy groups of $GL_n(A)$. These estimates are particularly sharp in the case where the natural map $GL_{n-1}(A)\to GL_n(A)$ is a weak homotopy equivalence. In that case, we explicitly determine the homotopical stable ranks of $C(X)\otimes A$ in terms of those of $A$ (Theorem \ref{thm:gsr_csr_cxa}). \\

Finally, we apply these results to a variety of examples. In particular, using Theorem \ref{thm:main_gsr_csxa}, we determine $gsr(C(\T^d))$ (Example \ref{ex:gsr_td}), thus answering a question of Nica \cite[Problem 5.8]{nica2}. We also estimate the homotopical stable ranks for noncommutative CW-complexes (Theorem \ref{thm:nccw_complex}), which naturally fall into the ambit of this paper.


\section{Preliminaries}
\subsection{Stable Ranks}\label{subsec:stable_ranks}
Let $A$ be a unital C*-algebra, and $n\in \N$. A vector $\underline{a} = (a_1,a_2,\ldots, a_n) \in A^n$ is said to be \emph{left unimodular} if $Aa_1 + Aa_2 + \ldots + Aa_n = A$. Equivalently, $\underline{a}$ is left unimodular if $\exists \underline{a'} = (a_1',a_2',\ldots, a_n') \in A^n$ such that $\sum_{i=1}^n a_i'a_i = 1_A$. We write $Lg_n(A)$ for the set of all left unimodular vectors in $A^n$. There is an analogous notion of a right unimodular vector, but the continuous involution on $A$ induces a homeomorphism between the two sets. For this reason, we only focus on the left unimodular vectors. \\

Unimodular vectors are related to projective modules by the following observation (See \cite[Lemma 10.4]{rieffel}): If $\underline{a} \in A^n$, then $\underline{a} \in Lg_n(A)$ if and only if $\underline{a}A$ is a direct summand of $A^n$.
\begin{equation}\label{eqn:stably_free}
A^n \cong P\oplus \underline{a}A
\end{equation}
Conversely, if $P$ is a projective right $A$-module such that $P\oplus A\cong A^n$, then there is a left unimodular vector $\underline{a}\in Lg_n(A)$ such that Equation \ref{eqn:stably_free} holds. The most interesting fact in all this is that this vector $\underline{a}$ also determines when $P$ is itself a free module.\\

Let $GL_n(A)$ denote the set of invertible elements in $M_n(A)$. Note that $GL_n(A)$ acts on $Lg_n(A)$ by left multiplication: $(T,\underline{a}) \mapsto T(\underline{a})$. Let $e_n \in Lg_n(A)$ denote the vector $(0,0,\ldots, 0,1)$. Now, if $P$ is a projective right $A$-module such that Equation \ref{eqn:stably_free} holds, then $P\cong A^{n-1}$ if and only if $\exists T\in GL_n(A)$ such that $T(\underline{a}) = e_n$ (See, for instance, \cite[Proposition 4.14]{magurn}). This leads to the following definition

\begin{defn}\label{defn:gsr}
Let $A$ be a unital C*-algebra, then the general stable rank (gsr) of $A$ is the least $n\geq 1$ such that either (and hence both) of the following hold:
\begin{itemize}
\item $GL_m(A)$ acts transitively on $Lg_m(A)$ for all $m\geq n$
\item For all $m\geq n$, if $P$ is a projective module over $A$ such that $P\oplus A\cong A^m$, then $P\cong A^{m-1}$
\end{itemize}
\end{defn}
If no such $n$ exists, we simply write $gsr(A) = +\infty$. To avoid repetition, we will adopt the same convention in the definitions of connected and topological stable rank below. \\

Recall that $A$ has the \emph{invariant basis number} (IBN) property if $A^m \cong A^n$ implies that $m=n$. This is equivalent to requiring that $[A]$ has infinite order in $K_0(A)$. Now a swindle argument (See \cite[Corollary 4.22]{magurn}) shows that if $A$ does not have the IBN property, then $gsr(A) = +\infty$. Thus, in this paper, we will be concerned only with C*-algebras having this property. \\

Given a C*-algebra $A$ that has the IBN property, any projective module $P$ satisfying the condition $P\oplus A^m \cong A^n$ (ie. \emph{stably free} projective modules) may be assigned a rank $(n-m)$, which is independent of the isomorphism. Hence, the general stable rank of $A$ simply determines the least rank at which stably free projective modules are forced to be free. \\

Now, the first condition of Definition \ref{defn:gsr} leads to another observation: Any subgroup of $GL_n(A)$ also acts on $Lg_n(A)$. Let $GL_n^0(A)$ denote the connected component of the identity in $GL_n(A)$, and let $El_n(A)$ denote the subgroup of $GL_n(A)$ generated by elementary matrices, ie. matrices which differ from the identity matrix by atmost one off-diagonal entry. Note that $El_n(A)$ is a subgroup of $GL_n^0(A)$. It was proved by Rieffel \cite[Corollary 8.10]{rieffel} that, for $n\geq 2$, $GL_n^0(A)$ acts transitively on $Lg_n(A)$ if and only if $El_n(A)$ acts transitively on $Lg_n(A)$. Furthermore, he observed that the least $n$ at which this occurs also has a topological interpretation \cite[Corollary 8.5]{rieffel}, given as the second condition below
\begin{defn}\label{defn:csr}
Let $A$ be a unital C*-algebra, then the connected stable rank (csr) of $A$ is the least $n\geq 1$ such that either (and hence both) of the following hold:
\begin{itemize}
\item $GL_m^0(A)$ acts transitively on $Lg_m(A)$ for all $m\geq n$

\item $Lg_m(A)$ is connected for all $m\geq n$.
\end{itemize}
For $n\geq 2$ this is equivalent to the condition
\begin{itemize}
\item $El_m(A)$ acts transitively on $Lg_m(A)$ for all $m\geq n$
\end{itemize}
\end{defn}
We now turn to the notion of stable rank that has proved to be most useful in applications.

\begin{defn}
Let $A$ be a unital C*-algebra. Then the topological stable rank (tsr) of $A$ is the least $n\geq 1$ such that $Lg_n(A)$ is dense in $A^n$. 
\end{defn}

We mention here that if $Lg_n(A)$ is dense in $A^n$, then $Lg_m(A)$ is dense in $A^m$ for all $m\geq n$. However, the analogous statements are not true with respect to Definition \ref{defn:gsr} and \ref{defn:csr}. Indeed, it is possible that $GL_n(A)$ acts transitively on $Lg_n(A)$, but $GL_{n+1}(A)$ does not act transitively on $Lg_{n+1}(A)$. For instance, if $A$ is a finite C*-algebra, then $GL_1(A) = Lg_1(A)$, so $GL_1(A)$ clearly acts transitively on $Lg_1(A)$, but it is not true that $gsr(A) = 1$ when $A$ is finite. 

\begin{rem}\label{rem:properties}
If $A$ is a non-unital C*-algebra, then the stable rank of $A$ is simply defined as the stable rank of $A^+$, the C*-algebra obtained by adjoining a unit to $A$.
\end{rem}

We now enumerate some basic properties of these ranks that are known or are easily observed from the definitions. While the original proofs are scattered through the literature, \cite{nica2} is an immediate reference for all these facts.

\begin{enumerate}
\item $gsr(A\oplus B) = \max\{gsr(A),gsr(B)\}$. Analogous statements hold for $csr$ and $tsr$.
\item $gsr(A) \leq csr(A) \leq tsr(A) + 1$.
\item[] Strict inequalities are possible in both cases. In fact, it is possible that $tsr(A) = +\infty$, while $csr(A) < \infty$.
\item For any $n\in \N$,
\begin{gather*}
csr(M_n(A))\leq \left\lceil \frac{csr(A)-1}{n}\right\rceil + 1, \text{ and } gsr(M_n(A))\leq \left\lceil \frac{gsr(A)-1}{n}\right\rceil + 1
\end{gather*}
Here, $\lceil x\rceil$ refers to the least integer greater than or equal to $x$.
\item If $\pi : A\to B$ is surjective, then
\begin{gather*}
csr(B)\leq \max\{csr(A),tsr(A)\}, \text{ and } gsr(B)\leq \max\{gsr(A),tsr(A)\}
\end{gather*}
\item Furthermore, if $\pi :A\to B$ is a split epimorphism (ie. there is a morphism $s:B\to A$ such that $\pi\circ s = \text{id}_B$), then
\begin{gather*}
csr(B)\leq csr(A), \text{ and } gsr(B)\leq gsr(A)
\end{gather*}
\item If $0\to J\to A\to B$ is an exact sequence of C*-algebras, then
\begin{gather*}
csr(A) \leq \max\{csr(J),csr(B)\}, \text{ and } gsr(A) \leq \max\{gsr(J),csr(B)\}
\end{gather*}
It is worth mentioning here that when $J$ is an ideal of $A$, then there is, a priori, no relation between the homotopical stable ranks of $A$ and those of $J$.
\item If $\{A_i : i\in J\}$ be an inductive system of C*-algebras with $A := \lim A_i$, then
\begin{gather*}
csr(A) \leq \liminf_i csr(A_i), \text{ and } gsr(A) \leq \liminf_i gsr(A_i)
\end{gather*}
\item If $gsr(A) = 1$ (and hence if $csr(A) = 1$), then $A$ is stably finite. Conversely, if $gsr(A)\leq 2$ and $A$ is finite, then $gsr(A) = 1$.
\item If $csr(A) = 1$, then $K_1(A) = 0$. The converse is true if $tsr(A) = 1$.
\item If $tsr(A) = 1$, then $A$ has cancellation of projections, so $gsr(A) = 1$.
\end{enumerate}
Finally, we turn to the most interesting property shared by gsr and csr, viz. homotopy invariance. Two morphisms $\phi_0, \phi_1 : A\to B$ are said to be \emph{homotopic} if there is a $\ast$-homomorphism $h:A\to C([0,1],B)$ such that $\varphi_0 = p_0\circ h$ and $\varphi_1 = p_1\circ h$, where $p_t(\zeta) := \zeta(t)$. In symbols, we denote this by $\varphi_0\simeq \varphi_1$. We say that $A$ \emph{homotopically dominates} $B$ if there are morphisms $\varphi : A\to B$ and $\psi : B\to A$ such that $\varphi\circ\psi\simeq \text{id}_B$. If, in addition, $\psi\circ \varphi\simeq \text{id}_A$, then we say that $A$ and $B$ are homotopy equivalent (in symbols, $A\simeq B$). In the commutative case, $C(X)\simeq C(Y)$ if and only if $X$ and $Y$ are homotopy equivalent as topological spaces (once again, we write $X\simeq Y$ if this happens). The following result is due to Nistor \cite[Lemma 2.8]{nistor} for the connected stable rank and Nica \cite[Theorem 4.1]{nica2} for the general stable rank.
\begin{thm}\label{thm:homotopy_dom}
If $A$ homotopically dominates $B$, then
\begin{gather*}
csr(A)\geq csr(B), \text{ and }  gsr(A) \geq gsr(B)
\end{gather*}
In particular, if $A \simeq B$, then $csr(A) = csr(B)$ and $gsr(A) = gsr(B)$.
\end{thm}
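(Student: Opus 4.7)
The plan is to use the two equivalent characterizations of $csr$ (topological: $Lg_m$ connected) and $gsr$ (algebraic: $GL_m$ acts transitively on $Lg_m$) and transport data back and forth between $A$ and $B$ along $\varphi$, $\psi$ and the homotopy $h : B \to C([0,1],B)$ witnessing $\varphi\circ\psi\simeq \text{id}_B$. The underlying observation is that $*$-homomorphisms preserve unimodularity and invertibility, so $\psi$ maps $Lg_m(B)$ into $Lg_m(A)$, $\varphi$ maps $Lg_m(A)$ into $Lg_m(B)$ and $GL_m(A)$ into $GL_m(B)$, and for each $t \in [0,1]$ the evaluation $p_t\circ h$ is a $*$-homomorphism $B\to B$.

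I would handle $csr$ first. Fix $m\geq csr(A)$ and $\underline{b}\in Lg_m(B)$; the goal is to produce a continuous path in $Lg_m(B)$ from $\underline{b}$ to $e_m$. Since $\psi(\underline{b})\in Lg_m(A)$ and $Lg_m(A)$ is connected, there is a path $\gamma:[0,1]\to Lg_m(A)$ from $\psi(\underline{b})$ to $e_m$. Applying $\varphi$ componentwise yields a path in $Lg_m(B)$ from $\varphi\psi(\underline{b})$ to $\varphi(e_m)=e_m$. Applying $h$ componentwise to $\underline{b}$ produces a path in $Lg_m(B)$ from $\underline{b}$ to $\varphi\psi(\underline{b})$, and concatenation gives the required path. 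This establishes $csr(A)\geq csr(B)$.

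For $gsr$, the first two steps are the same in spirit but use transitivity of the $GL_m$-action instead of connectedness. Given $\underline{b}\in Lg_m(B)$ with $m\geq gsr(A)$, there is $S\in GL_m(A)$ with $S\psi(\underline{b})=e_m$, so $\varphi(S)\in GL_m(B)$ satisfies $\varphi(S)\cdot \varphi\psi(\underline{b})=e_m$. What remains is to find $R\in GL_m(B)$ sending $\underline{b}$ to $\varphi\psi(\underline{b})$; then $\varphi(S)R$ moves $\underline{b}$ to $e_m$ and we are done. As in the $csr$ step, applying $h$ to $\underline{b}$ gives a path in $Lg_m(B)$ from $\underline{b}$ to $\varphi\psi(\underline{b})$, and the key point is that such a path can be lifted to a path in $GL_m(B)$ starting at the identity, whose endpoint is the desired $R$.

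The technical crux is this path-lifting lemma: any continuous path in $Lg_m(B)$ lies in a single $GL_m^0(B)$-orbit. This is standard and can be proved by a partition-of-unity / small-perturbation argument — if $\underline{a},\underline{a}'\in Lg_m(B)$ are sufficiently close in norm, then a product of elementary matrices close to the identity carries one to the other (this is essentially the content of Rieffel's Corollary 8.10 in \cite{rieffel}) — and then the path is covered by finitely many such small steps by compactness. Once this lemma is in hand, both homotopy-invariance statements drop out, and the equality case when $A\simeq B$ follows by applying the domination statement in both directions.
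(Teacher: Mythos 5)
The paper does not actually prove this theorem; it attributes the $csr$ statement to Nistor and the $gsr$ statement to Nica and cites their papers. Your argument is a correct, self-contained proof, and it is essentially the standard one underlying those citations: push $\underline{b}\in Lg_m(B)$ forward along $\psi$, solve the problem in $A$ (connectedness of $Lg_m(A)$, resp.\ transitivity of $GL_m(A)$), pull the solution back along $\varphi$, and bridge the gap between $\underline{b}$ and $\varphi\psi(\underline{b})$ using the path $t\mapsto (p_t\circ h)(\underline{b})$ coming from the homotopy. Two small points. First, your ``technical crux'' is exactly the fact that the $GL_m^0(B)$-orbits in $Lg_m(B)$ are open (hence coincide with the path components): if $\underline{a}\in Lg_m(B)$ has a left inverse row $\underline{a}'$ and $\|\underline{b}-\underline{a}\|$ is small, then $T=I_m+(\underline{b}-\underline{a})\underline{a}'$ satisfies $T\underline{a}=\underline{b}$ and lies in $GL_m^0(B)$; once orbits are open, a connected path image lies in a single orbit, so no compactness or genuine path-lifting is needed. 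This fact is the content of Rieffel's equivalence ``$GL_m^0$ acts transitively iff $Lg_m$ is connected'' (the source of the second condition in Definition \ref{defn:csr}), rather than of his Corollary 8.10 on elementary matrices, so your citation is slightly off target though the claim itself is standard and true. Second, you implicitly use that $\varphi$, $\psi$ and each $p_t\circ h$ are unital (so that they carry $Lg_m$ to $Lg_m$ and fix $e_m$); this is the intended setting here, and in fact unitality of the endpoints $\varphi\circ\psi$ and $\mathrm{id}_B$ forces $p_t\circ h(1)=1$ for all $t$ since $t\mapsto p_t(h(1))$ is a continuous path of projections starting at $1$, but it is worth saying explicitly (the general non-unital case reduces to this one by unitizing, since $csr$ and $gsr$ are unchanged by adding a unit).
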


We now turn to the problem of computing these ranks. An important tool in such an investigation is the following (See \cite[Section 1]{rieffel2}): For $m\geq 2$, the orbit of $e_m\in Lg_m(A)$ under the action of $GL_m(A)$ is called the space of \emph{last columns} of $A$, and is denoted by $Lc_m(A)$. It was first proved by Corach and Larotonda \cite{corach} that the natural map $t: GL_m(A) \to Lc_m(A)$ defines a principal, locally trivial fiber bundle on $Lc_m(A)$, with structural group $TL_m(A)$, the set of matrices of the form
\[
\begin{pmatrix}
x & 0 \\
c & 1
\end{pmatrix}
\]
where $x\in GL_{m-1}(A)$ and $c\in A^{m-1}$. Now, $TL_m(A)$ is homotopy equivalent to $GL_{m-1}(A)$, so the long exact sequence of homotopy groups arising from the fibration $TL_m(A)\to GL_m(A)\xrightarrow{t} Lc_m(A)$ takes the form
\begin{equation}\label{eqn:les_fibration}
\ldots \to \pi_{n+1}(Lc_m(A))\to \pi_n(GL_{m-1}(A))\to \pi_n(GL_m(A))\to \pi_n(Lc_m(A))\to \ldots
\end{equation}
which ends in a sequence of pointed sets $\pi_0(GL_{m-1}(A))\to \pi_0(GL_m(A))\to \pi_0(Lc_m(A))$. This will be of fundamental importance to us.

\subsection{Notational Conventions}

We fix some notation we will use repeatedly throughout the paper: We write $\S^n$ for the $n$-dimensional sphere, $\D^n$ for the $n$-dimensional disk, $\I^k$ for the $k$-fold product of the unit interval $\I=[0,1]$, and $\T^k$ for the $k$-fold product of the circle $\T$. Given a C*-algebra $A$ and a compact Hausdorff space $X$, we identify $C(X)\otimes A$ with $C(X,A)$, the space of continuous functions taking values in $A$. If $X = \T^k$, we simply write $\T^kA$ for $C(\T^k,A)$. We write $\theta_A^n$ for the map $GL_{n-1}(A) \to GL_n(A)$ given by
\[
a \mapsto \begin{pmatrix}
a & 0 \\
0 & 1
\end{pmatrix}
\]
If there is no ambiguity, we simply write $\theta_A$ for this map. Given a unital $\ast$-homomorphism $\varphi : A\to B$, we write $\varphi_n$ for the induced maps in a variety of situations, such as $M_n(A)\to M_n(B), GL_n(A)\to GL_n(B), Lg_n(A)\to Lg_n(B)$, etc. Furthermore, when there is no ambiguity, we once again drop the subscript and denote the map by $\varphi$. Also, when dealing with modules over a C*-algebra, we will implicitly be referring to finitely generated \emph{right} modules. \\

Given two topological spaces $X$ and $Y$, we will write $[X,Y]$ for the set of free homotopy classes of continuous maps between them. If $X$ and $Y$ are pointed spaces, then we write $[X,Y]_{\ast}$ for the set of based homotopy classes of continuous functions based at those distinguished points. Here, we will be concerned with three pointed spaces associated to a unital C*-algebra $A$: $GL_n(A)$, as a subspace of $M_n(A)$ with base point $I_n$; $Lg_m(A)$, as a subspace of $A^m$ with base point $e_m$; and $Lc_m(A)$, as a subspace of $A^m$ with base point $e_m$.


\section{Homotopical Stable Ranks of Pullbacks}
Given unital $\ast$-homomorphisms $\gamma:C\to D$ and $\delta :B\to D$, we consider the pullback
\[
A := B\oplus_D C = \{(b,c)\in B\oplus C : \delta(b) = \gamma(c)\}
\]
As is customary, $A$ is best described by a pullback diagram, which we fix throughout the section
\begin{equation}\label{eqn:pullback}
\xymatrix{
A\ar[r]^{\alpha}\ar[d]^{\beta} &B\ar[d]^{\delta} \\
C\ar[r]^{\gamma} & D
}
\end{equation}
where $\alpha$ and $\beta$ are the projection maps. Furthermore, we assume that either $\gamma$ or $\delta$ is surjective. As pointed out in the example following \cite[Theorem 4.1]{brown}, this is quite a natural assumption when considering stable ranks. The goal then is to determine $gsr(A)$ and $csr(A)$ in terms of those of $B$ and $C$. To put things in perspective, we recall that the topological stable rank of $A$ may be estimated by \cite[Theorem 4.1]{brown}
\[
tsr(A) \leq \max\{tsr(B),tsr(C)\}
\]
The next example shows that the corresponding estimate for homotopical stable ranks cannot hold.
\begin{ex}\label{ex:sr_pullback}
Consider the pullback diagram
\[
\xymatrix{
C(\S^n)\ar[r]\ar[d] & C(\D^n)\ar[d]^{\delta} \\
C(\D^n)\ar[r]^{\gamma} & C(\S^{n-1})
}
\]
where $\gamma$ and $\delta$ are the natural restriction maps. Since $\D^n$ is contractible, $gsr(C(\D^n)) = 1$, but $gsr(C(\S^n)) > 1$ if $n\geq 5$ (See Example \ref{ex:gsr_suspension})
\end{ex}

\subsection{General Stable Rank}

To determine $gsr(A)$, we now describe a recipe due to Milnor \cite{milnor} to construct projective modules over $A$. Given a unital ring homomorphism $f:R\to S$ and a right $R$-module $M$, we write $f_{\#}(M)$ for the right $S$-module $S\otimes_R M$, and denote by $f_{\ast}$ the canonical map $M\to f_{\#}(M)$ given by $m \mapsto 1\otimes_R m$. Note that if $M$ is a free $R$-module with basis $\{b_{\alpha}\}$, then $f_{\#}(M)$ is a free $S$-module with basis $\{f_{\ast}(b_{\alpha})\}$. \\

Now consider a pullback diagram as above. Let $P$ and $Q$ be projective modules over $B$ and $C$ respectively, and suppose we are given a $D$-isomorphism
\[
h : \delta_{\#}(P) \to \gamma_{\#}(Q)
\]
Then consider
\[
M := \{(p,q) \in P\oplus Q : h\circ \delta_{\ast}(p) = \gamma_{\ast}(q)\}
\]
$M$ has a natural right $A$-module structure given by $(p,q)\cdot a := (p\cdot\alpha(a), q\cdot\beta(a))$. We denote this module by $M(P,Q,h)$. Milnor now proves the following 
\begin{thm}\cite[Theorem 2.1-2.3]{milnor}\label{thm:milnor_pullback}
\begin{enumerate}
\item The module $M = M(P,Q,h)$ is projective over $A$. Furthermore, if $P$ and $Q$ are finitely generated over $B$ and $C$ respectively, then $M$ is finitely generated over $A$.
\item Every projective $A$-module is isomorphic to $M(P,Q,h)$ for some suitably chosen $P,Q$ and $h$.
\item The modules $P$ and $Q$ are naturally isomorphic to $\alpha_{\#}(M)$ and $\beta_{\#}(M)$ respectively.
\end{enumerate}
\end{thm}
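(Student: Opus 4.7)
The plan is to prove assertions (3), (1), (2) in that order, since (3) provides the canonical identifications used in (2) and clarifies the free-module structure needed for (1).

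For (3), the $A$-linear projection $M \to P$, $(p,q)\mapsto p$, extends to a map $\alpha_\#(M) = B\otimes_A M \to P$. To build an inverse, I would use that when $\gamma$ is surjective, so is the natural map $Q \to \gamma_\#(Q) = Q/(\ker\gamma)Q$. Hence for each $p\in P$ there exists $q\in Q$ with $\gamma_*(q) = h(\delta_*(p))$; two such lifts differ by an element of $(\ker\gamma)Q$, which becomes trivial after applying $\alpha_\#$, giving a well-defined inverse $P \to \alpha_\#(M)$. The companion isomorphism $\beta_\#(M)\cong Q$ follows by the symmetric argument (exchanging $(h,\gamma,\delta)$ with $(h^{-1},\delta,\gamma)$), which still goes through provided at least one of $\gamma$ or $\delta$ is surjective.

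For (1), I first note that $M(B^n, C^n, \mathrm{id}) = A^n$ coordinatewise by the definition of the pullback. The pivotal observation is that if $h$ corresponds to $H\in GL_n(D)$ admitting a lift $\tilde H\in GL_n(B)$, the map $A^n \to M(B^n, C^n, H)$ given by $(b,c)\mapsto (\tilde H b, c)$ is an $A$-module isomorphism. For an arbitrary $H\in GL_n(D)$, Whitehead's lemma expresses the block matrix $\mathrm{diag}(H,H^{-1}) \in GL_{2n}(D)$ as a product of elementary matrices, each of which lifts to an elementary matrix over $B$ by surjectivity of $\delta$. Consequently $M(B^{2n}, C^{2n}, h\oplus h^{-1})\cong A^{2n}$, so $M(B^n, C^n, h)$ is a summand of a free $A$-module, hence projective. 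For general projective $P$ and $Q$: choose complements $P\oplus P'\cong B^n$ and $Q\oplus Q''\cong C^n$ (padding with free summands to equalize ranks), build a $D$-isomorphism $h'$ between $\delta_\#(P')$ and $\gamma_\#(Q'')$ after further stabilization, and use the direct-sum decoupling $M(P\oplus P', Q\oplus Q'', h\oplus h')\cong M(P,Q,h)\oplus M(P',Q'',h')$ to reduce to the free case. Finite generation propagates along the same chain.

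For (2), given a projective $A$-module $N$, I would take $P := \alpha_\#(N)$, $Q := \beta_\#(N)$, and define $h$ as the canonical composition $\delta_\#(\alpha_\#(N))\cong (\delta\alpha)_\#(N) = (\gamma\beta)_\#(N) \cong \gamma_\#(\beta_\#(N))$. The $A$-linear map $N\to M(P,Q,h)$, $n\mapsto (\alpha_*(n), \beta_*(n))$, is an isomorphism for $N$ free by direct inspection, and the general case follows by writing $N$ as a summand of a free $A$-module and invoking naturality. The main obstacle is the projectivity argument for general $(P,Q,h)$: the $D$-modules $\delta_\#(P')$ and $\gamma_\#(Q'')$ need not be isomorphic a priori, only stably so, so one must stabilize further while maintaining the coherence of the pullback identifications at each step. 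This bookkeeping is the technical crux of the proof.
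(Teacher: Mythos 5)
This theorem is not proved in the paper at all: it is quoted verbatim from Milnor (\emph{Introduction to Algebraic K-Theory}, Theorems 2.1--2.3), so the only meaningful comparison is with Milnor's argument, which your outline for (1) and (2) largely reproduces (Whitehead's lemma, lifting elementary matrices along the surjection, additivity, reduction of (2) to the free case). The genuine problem is in your treatment of (3). First, the step you assert in one line --- that for $x\in Q\cdot\ker\gamma$ the element $1\otimes(0,x)$ ``becomes trivial after applying $\alpha_{\#}$'' --- is not automatic: to kill it you must write $(0,x)=\sum_j (p_j,q_j)\cdot(0,c_j)$ with $(p_j,q_j)\in M$ and $c_j\in\ker\gamma$, i.e.\ rewrite $x=\sum q_jc_j$ using only those $q_j$ that occur as second coordinates of elements of $M$; this is true, but only because $\delta_{*}(P)$ generates $\delta_{\#}(P)$ over $D$, so the second coordinates of $M$ generate $Q$ as a $C$-module, and that argument needs to be supplied. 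Second, and more seriously, the claim that $\beta_{\#}(M)\cong Q$ ``follows by the symmetric argument'' is false under the standing hypothesis that only one of $\gamma,\delta$ is surjective: exchanging $(h,\gamma,\delta)$ with $(h^{-1},\delta,\gamma)$ requires $\delta$ to be surjective. If only $\gamma$ is surjective, then $\beta:A\to C$ is not surjective, $M\to Q$ need not be onto, and $\beta_{\#}(M)$ is not a quotient of $M$, so every step of your direct argument breaks on that side. This is exactly the point where Milnor does not argue directly but instead deduces both isomorphisms of (3) from the free case with a liftable gluing matrix, using the stabilization of (1) and additivity --- i.e.\ projectivity of $P$ and $Q$ enters essentially.

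There is also a gap you flag but do not close in (1): the existence of an isomorphism $h':\delta_{\#}(P')\to\gamma_{\#}(Q'')$ for the complements is not mere bookkeeping, and ``stabilize further'' is not yet a proof. The standard (Milnor's) resolution is a cross-stabilization using $h$ itself: choose $P\oplus P_1\cong B^m$ and $Q\oplus Q_1\cong C^n$, then set $P':=P_1\oplus B^n$ and $Q':=Q_1\oplus C^m$, so that
\[
\delta_{\#}(P')\cong \delta_{\#}(P_1)\oplus\gamma_{\#}(Q)\oplus\gamma_{\#}(Q_1)\cong \delta_{\#}(P_1)\oplus\delta_{\#}(P)\oplus\gamma_{\#}(Q_1)\cong D^m\oplus\gamma_{\#}(Q_1)\cong\gamma_{\#}(Q'),
\]
where the middle identification uses $h^{-1}$. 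With this in place your reduction to the free case, the Whitehead-lemma lifting argument, and your proof of (2) (which is correct as sketched, via additivity and the free case) go through; without it, (1) and hence the deduction of (3) are incomplete.
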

Furthermore, one has the following result. Recall that, for our purposes, we are only interested in C*-algebras that have the IBN property.
\begin{prop}\cite[Corollary 13.11]{magurn}\label{prop:double_coset_iso}
Given a pullback diagram as above, suppose $B$ or $C$ has the IBN property. Let $h_1,h_2\in GL_n(D)$, then $M(B^n,C^n,h_1)$ $\cong M(B^n,C^n,h_2)$ if and only if $h_1 = \delta(S_1)h_2\gamma(S_2)$ for some $S_1 \in GL_n(B), S_2\in GL_n(C)$.
\end{prop}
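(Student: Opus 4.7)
The plan is to prove the two implications separately, using the functoriality built into Milnor's construction (Theorem \ref{thm:milnor_pullback}). The core idea is that an $A$-module isomorphism $\Phi : M(B^n,C^n,h_1) \to M(B^n,C^n,h_2)$ is equivalent, under the two projections $\alpha_{\#}$ and $\beta_{\#}$, to a compatible pair $(S_1,S_2) \in GL_n(B) \times GL_n(C)$, and the compatibility of $\Phi$ with the gluing data over $D$ translates exactly into the desired double coset relation.

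For the \emph{if} direction, assume $h_1 = \delta(S_1)h_2\gamma(S_2)$ and define an explicit $A$-module map
\[
\Phi : M(B^n, C^n, h_1) \longrightarrow M(B^n, C^n, h_2)
\]
by letting $S_1$ and $S_2$ (or their inverses, according to the chosen sign conventions) act coordinatewise on $B^n$ and $C^n$. A straightforward substitution of the hypothesis into the defining equation $h_1\delta(p) = \gamma(q)$ of the source pullback shows that $\Phi(p,q)$ satisfies the analogous identity for $h_2$, and so lands in the target pullback. $A$-linearity is immediate because both coordinate actions commute with the diagonal action $(p,q)\cdot a = (p\alpha(a), q\beta(a))$, and the inverse of $\Phi$ is given by the same formula with $S_1, S_2$ inverted.

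For the \emph{only if} direction, given any $A$-module isomorphism $\Phi : M(B^n, C^n, h_1) \to M(B^n, C^n, h_2)$, I would apply Theorem \ref{thm:milnor_pullback}(3) to obtain induced $B$- and $C$-module isomorphisms $\alpha_{\#}(\Phi) : B^n \to B^n$ and $\beta_{\#}(\Phi) : C^n \to C^n$, which correspond to matrices $S_1 \in GL_n(B)$ and $S_2 \in GL_n(C)$ respectively. The crucial step is to verify that the square
\[
\xymatrix{
\delta_{\#}(B^n) \ar[r]^{h_1} \ar[d]_{\delta(S_1)} & \gamma_{\#}(C^n) \ar[d]^{\gamma(S_2)} \\
\delta_{\#}(B^n) \ar[r]_{h_2} & \gamma_{\#}(C^n)
}
\]
commutes as a diagram of $D$-module maps between copies of $D^n$. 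This is obtained by tracing an arbitrary element $(p, q) \in M(B^n, C^n, h_1)$ through $\Phi$: the resulting identity holds initially on the image $\delta(B^n) \subseteq D^n$, and the surjectivity hypothesis on $\gamma$ (or $\delta$) extends the equality to all of $D^n$. Reading off the matrix identity from this commutative square and rearranging yields $h_1 = \delta(S_1)h_2\gamma(S_2)$.

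The IBN assumption enters to ensure that $\alpha_{\#}$ and $\beta_{\#}$ of the two pullback modules are free of rank precisely $n$ (and not of some other rank $m \neq n$ that could occur when the IBN property fails), so that the matrices $S_i$ are of the correct size and the comparison is well-defined. The main technical obstacle I anticipate is convention bookkeeping --- whether $h$ is viewed as acting on columns or rows, whether matrices act on the left or right, and where inverses should fall --- since a single misalignment flips the order of the double coset. Once these conventions are fixed, both directions reduce to short diagram chases that use the surjectivity of $\gamma$ or $\delta$ in an essential way.
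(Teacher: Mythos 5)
The paper gives no proof of this proposition at all --- it is quoted from Magurn \cite[Corollary 13.11]{magurn} --- so the only comparison available is with the standard Milnor-patching argument, and that is essentially what you reconstruct: an explicit coordinatewise isomorphism for the ``if'' direction, and for ``only if'' the application of $\alpha_{\#},\beta_{\#}$ to $\Phi$, naturality of the identifications in Theorem \ref{thm:milnor_pullback}(3) (which is what justifies $\Phi(p,q)=(S_1p,S_2q)$), and a commuting square over $D$. Two points need tightening. First, the convention issue you flag is not hypothetical: with the paper's own set-up (right modules, so $\mathrm{Aut}_D(D^n)$ acts by left multiplication on column vectors and $M(B^n,C^n,h)=\{(p,q):h\,\delta(p)=\gamma(q)\}$), your square yields $h_2\,\delta(S_1)=\gamma(S_2)\,h_1$, i.e. $h_1=\gamma(S_2^{-1})\,h_2\,\delta(S_1)$, which is the double coset $\gamma(GL_n(C))\backslash GL_n(D)/\delta(GL_n(B))$ and does \emph{not} literally rearrange to the displayed $h_1=\delta(S_1)h_2\gamma(S_2)$; the displayed order belongs to Magurn's conventions, and the same fix is needed for your ``if'' direction, where the coordinatewise map $(p,q)\mapsto(S_1p,S_2q)$ works precisely when $h_2\delta(S_1)=\gamma(S_2)h_1$. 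This mismatch is harmless for every use made of the proposition in the paper, but a careful write-up must fix one convention and stick to it. Second, ``surjectivity extends the equality to all of $D^n$'' conflates two steps: surjectivity of (say) $\gamma$ is what makes the projection $M(h_1)\to B^n$ onto, so the identity can be tested on the standard basis vectors of $B^n$; since $\delta$ is unital their images are the standard basis of $D^n$, and it is $D$-linearity that then gives the matrix identity (when instead $\delta$ is the surjective leg, test on the standard basis of $C^n$, solving $\delta(p)=h_1^{-1}\gamma(q)$). Finally, the IBN hypothesis is not actually used in your argument (automorphisms of $B^n$ are given by $GL_n(B)$ with or without IBN); it is inherited from Magurn's formulation, where it makes the rank $n$ well defined, so your gloss on its role is reasonable but not needed for this implication.
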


For $h_1,h_2\in GL_n(D)$, we write $h_1\sim h_2$ if and only if $\exists S_1\in GL_n(B), S_2\in GL_n(C)$ such that $h_1 = \delta(S_1)h_2\gamma(S_2)$. Note that this is an equivalence relation, whose equivalence classes are the double cosets
\[
\delta(GL_n(B))\backslash GL_n(D)/\gamma(GL_n(C))
\]
Given two elements $h_1, h_2 \in GL_n(D)$, we write $h_1\sim_h h_2$ if there is a path $f:\I \to GL_n(D)$ such that $f(0) = h_1$ and $f(1) = h_2$. The following observation now guides our investigation.

\begin{lemma}\label{lem:homotopic_pullback}
Consider the pullback diagram as above, and suppose $h_1, h_2\in GL_n(D)$ be such that $h_1\sim_h h_2$. Then $M(B^n,C^n,h_1)\cong M(B^n,C^n,h_2)$
\end{lemma}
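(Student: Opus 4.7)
The plan is to reduce the lemma to Proposition \ref{prop:double_coset_iso} by showing that $h_1$ and $h_2$ represent the same double coset in $\delta(GL_n(B))\backslash GL_n(D)/\gamma(GL_n(C))$. Since a path $f:\I\to GL_n(D)$ from $h_1$ to $h_2$ yields a path $t\mapsto f(t)h_2^{-1}$ from $u:=h_1h_2^{-1}$ to $I_n$, we have $u\in GL_n^0(D)$. The entire problem therefore becomes: show that $u$ lifts, through $\delta$ or $\gamma$, to an invertible matrix over $B$ or $C$.

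By hypothesis, either $\delta$ or $\gamma$ is surjective; by symmetry I will treat the case where $\delta$ is surjective. Then $\delta_n:M_n(B)\to M_n(D)$ is surjective as well. The key Banach-algebraic fact I would invoke is that for any unital Banach algebra $R$, the identity component $GL_n^0(R)$ is generated by the set of exponentials $\{e^a : a\in M_n(R)\}$. Given a factorization $u=e^{a_1}e^{a_2}\cdots e^{a_k}$ in $GL_n^0(D)$, pick lifts $b_i\in M_n(B)$ with $\delta(b_i)=a_i$, and set
\[
S_1 := e^{b_1}e^{b_2}\cdots e^{b_k} \in GL_n^0(B).
\]
Continuity of the exponential and multiplicativity of $\delta$ give $\delta(S_1)=u$. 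Then $h_1=\delta(S_1)\,h_2=\delta(S_1)\,h_2\,\gamma(I_n)$, so $h_1\sim h_2$ in the sense preceding the lemma. If instead $\gamma$ is surjective, the same argument applied to $h_2^{-1}h_1\in GL_n^0(D)$ produces $S_2\in GL_n^0(C)$ with $\gamma(S_2)=h_2^{-1}h_1$, so $h_1=\delta(I_n)\,h_2\,\gamma(S_2)$ and again $h_1\sim h_2$. An appeal to Proposition \ref{prop:double_coset_iso} (noting that the IBN hypothesis is in force throughout the paper) finishes the argument.

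The only real obstacle is the lifting step, i.e.\ verifying that $\delta(GL_n^0(B))\supseteq GL_n^0(D)$ whenever $\delta:B\to D$ is surjective. This is where I would be most careful to cite (or briefly recall) the standard fact that in a unital Banach algebra, the identity component of the group of invertibles is the subgroup generated by exponentials, after which surjectivity of $\delta_n$ on matrix algebras reduces the task to lifting individual elements of $M_n(D)$. Everything else is formal once this ingredient is in hand.
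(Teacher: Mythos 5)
Your proof is correct and follows essentially the same route as the paper: reduce modulo Proposition \ref{prop:double_coset_iso} by noting that $h_2^{-1}h_1$ (or $h_1h_2^{-1}$) lies in $GL_n^0(D)$ and hence lifts through the surjective map to an invertible over $C$ (or $B$). The paper simply invokes this lifting fact without spelling out the exponential argument, which you make explicit.
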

\begin{proof}
Without loss of generality, assume that $\gamma$ is surjective. Since $h_2^{-1}h_1\in GL_n^0(D), \exists S_2\in GL_n(C)$ such that $h_2^{-1}h_1 = \gamma(S_2)$, and so $h_1 = \delta(I_{B^n})h_2\gamma(S_2)$. The result follows by Proposition \ref{prop:double_coset_iso}.
\end{proof}

Consider the sequence of groups
\[
\{1_D\} = GL_0(D) \hookrightarrow D^{\times} = GL_1(D)\hookrightarrow GL_2(D) \hookrightarrow \ldots
\]
For a compact Hausdorff space $X$, this induces a sequence of homotopy groups (of maps based at the identity)
\[
[X,GL_0(D)]_{\ast}\to [X,GL_1(D)]_{\ast}\to [X,GL_2(D)]_{\ast}\to \ldots
\]
We define
\begin{itemize}
\item $\inj_X(D)$ to be the least $n\geq 1$ such that the map $(\theta_D)_{\ast}: [X,GL_{m-1}(D)]_{\ast}$ $ \to [X,GL_m(D)]_{\ast}$ is injective for all $m\geq n$.
\item $\surj_X(D)$ to be the least $n\geq 1$ such that $(\theta_D)_{\ast}: [X,GL_{m-1}(D)]_{\ast}\to [X,GL_m(D)]_{\ast}$ is surjective for all $m\geq n$.
\item For $X = \S^n$, we write $\inj_n(D)$ and $\surj_n(D)$ for $\inj_X(D)$ and $\text{surj}_X(D)$ respectively.
\end{itemize}

\begin{rem}\label{rem:split_epi}
Let $A$ be a unital C*-algebra, and $X$ a compact Hausdorff space. Then, note that the natural map $Lg_m(C(X)\otimes A) \hookrightarrow C(X,Lg_m(A))$ given by evaluation is a homeomorphism (See \cite[Lemma 2.3]{rieffel2}). It follows that
\[
\pi_0(Lg_m(C(X)\otimes A)) = [X,Lg_m(A)]
\]
where the right hand side refers to free homotopy classes of maps from $X$ to $Lg_m(A)$. Furthermore, evaluation at a point gives a split epimorphism $C(X)\otimes A\to A$. Hence, it follows from Remark \ref{rem:properties}, (5) that
\begin{gather*}
csr(C(X)\otimes A)\geq csr(A),\text{ and } gsr(C(X)\otimes A)\geq gsr(A)
\end{gather*}
\end{rem}

Before we begin, note that $GL_n(A)$ is an open subset of a locally path connected space, so connected components in $GL_n(A)$ coincide with path components. The same is true for $Lg_m(A)$ and $Lc_m(A)$ (See \cite[Section 1]{rieffel2}) as well, and we will use this fact implicitly henceforth.

\begin{lemma}\label{lem:forgetful}
Let $X$ be a compact Hausdorff space, $A$ a unital C*-algebra, and $m\in \N$. Consider the forgetful map
\[
F: [X,Lg_m(A)]_{\ast} \to [X,Lg_m(A)]
\]
If $m\geq csr(A)$, then $F$ is surjective. If $m\geq gsr(\T A)$, then $F$ is injective.
\end{lemma}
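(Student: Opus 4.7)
The plan is to prove surjectivity and injectivity separately. Both claims amount to understanding the change-of-basepoint action of $\pi_1(Lg_m(A), e_m)$ on $[X, Lg_m(A)]_{\ast}$: surjectivity of $F$ reduces to path-connectedness of $Lg_m(A)$, while injectivity reduces to triviality of this action.

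For surjectivity, I would invoke Definition \ref{defn:csr} to conclude that $Lg_m(A)$ is path-connected when $m \geq csr(A)$. Given any $f : X \to Lg_m(A)$, I choose a path $\alpha$ in $Lg_m(A)$ from $f(x_0)$ to $e_m$ and apply the homotopy extension property for the pair $(X, \{x_0\})$ to deform $f$ freely to a map $f'$ with $f'(x_0) = e_m$, producing a based representative of the given free homotopy class. This part is routine; the only caveat is the mild assumption on $X$ needed for the homotopy extension.

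For injectivity, the strategy is as follows. Suppose $f_0, f_1 : (X, x_0) \to (Lg_m(A), e_m)$ are freely homotopic via $H : X \times \I \to Lg_m(A)$. The restriction $\gamma(t) := H(x_0, t)$ is a loop in $Lg_m(A)$ based at $e_m$. Under the evaluation homeomorphism in Remark \ref{rem:split_epi}, $\gamma$ corresponds to an element of $Lg_m(\T A)$, and the hypothesis $m \geq gsr(\T A)$ gives a continuous $T : \T \to GL_m(A)$, viewed as an element of $GL_m(\T A) = C(\T, GL_m(A))$, with $T(z) \cdot e_m = \gamma(z)$ for all $z \in \T$. At the basepoint $1 \in \T$ we have $T(1) \cdot e_m = \gamma(1) = e_m$, so $T(1)$ lies in the stabilizer $TL_m(A)$ of $e_m$. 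Setting $\tilde T(z) := T(z) T(1)^{-1}$ then yields a based loop in $GL_m(A)$ at $I_m$ still satisfying $\tilde T(z) \cdot e_m = \gamma(z)$, since $T(1)^{-1} \cdot e_m = e_m$.

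With this lift in hand, I would conclude by defining $H' : X \times \I \to Lg_m(A)$ by $H'(x, t) := \tilde T(t)^{-1} \cdot H(x, t)$ and checking that $H'(x, 0) = f_0(x)$, $H'(x, 1) = f_1(x)$ (using $\tilde T(0) = \tilde T(1) = I_m$), and $H'(x_0, t) = \tilde T(t)^{-1} \cdot \gamma(t) = e_m$, so that $H'$ is a based homotopy between $f_0$ and $f_1$. The main obstacle is the lifting step: the transitivity guaranteed by $gsr(\T A) \leq m$ a priori only provides a \emph{free} loop $T$ in $GL_m(A)$, and converting this to a based loop without disturbing the identity $T(\cdot) \cdot e_m = \gamma$ is precisely what the right multiplication by $T(1)^{-1}$ accomplishes, exploiting that elements of the stabilizer of $e_m$ act trivially on $e_m$.
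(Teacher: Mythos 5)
Your injectivity argument coincides with the paper's proof: the loop $\gamma(t)=H(x_0,t)$ is regarded as an element of $Lg_m(\T A)=Lc_m(\T A)$, the hypothesis $m\geq gsr(\T A)$ yields $T\in GL_m(\T A)=C(\T,GL_m(A))$ with $T(z)e_m=\gamma(z)$, the correction $\widetilde T(z):=T(z)T(1)^{-1}$ is legitimate precisely because $T(1)$ stabilizes $e_m$, and $H'(x,t):=\widetilde T(t)^{-1}H(x,t)$ is the required based homotopy. That half is correct and is essentially the paper's argument.

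The surjectivity half has a genuine gap. You use only that $Lg_m(A)$ is path-connected and then invoke the homotopy extension property of the pair $(X,\{x_0\})$ to slide $f(x_0)$ to $e_m$. But the lemma is stated for an arbitrary compact Hausdorff $X$, and it is later applied in that generality (for instance in the proof of Corollary \ref{cor:gsr_wedge}, where $X$ and $Y$ are arbitrary compact Hausdorff spaces); for such $X$ the inclusion $\{x_0\}\hookrightarrow X$ need not be a cofibration, so the HEP step is not available, and your ``mild assumption on $X$'' is an extra hypothesis the statement does not carry. The paper avoids this by using the full strength of $m\geq csr(A)$ rather than mere connectedness of $Lg_m(A)$: transitivity of the $GL_m^0(A)$-action gives $T\in GL_m^0(A)$ with $T(f(x_0))=e_m$, and choosing a path $h:\I\to GL_m(A)$ from $I_m$ to $T$ produces the free homotopy $H(t,x):=h(t)(f(x))$ from $f$ to the based map $x\mapsto T(f(x))$, with no condition on $X$ whatsoever. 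Replacing your HEP step by this group-action deformation repairs the argument; the rest of your surjectivity discussion is fine, since components and path components of $Lg_m(A)$ agree.
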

\begin{proof}
Suppose $m\geq csr(A)$, then $Lc_m(A) = Lg_m(A)$ and $GL_m^0(A)$ acts transitively on $Lg_m(A)$. Let $x_0\in X$ be a fixed base point, and $f:X\to Lg_m(A)$ a continuous function. Then $\exists T\in GL_m^0(A)$ such that $T(f(x_0)) = e_m$. Let $g:X\to Lg_m(A)$ be given by $g(x) := T(f(x))$. If $h:\I \to GL_m(A)$ is a path such that $h(0) = I_n$ and $h(1) = T$, then the homotopy $H:\I\times X\to Lg_m(A)$ given by $H(t,x) := h(t)(f(x))$ is such that $H(0,x) = f(x)$, and $H(1,x) = g(x)$ for all $x\in X$. Hence, $[f] = [g]$ in $[X,Lg_m(A)]$. Since $g(x_0) = e_m$, we see that $F$ is surjective. \\

Now suppose $m \geq gsr(\T A)$, then $m\geq gsr(A)$, so $Lc_m(A) = Lg_m(A)$. Let $f$ and $g$ be two continuous functions from $X$ to $Lg_m(A)$ such that $f(x_0) = g(x_0) = e_m$, and suppose that there is a free homotopy $H:\I \times X\to Lg_m(A)$ such that
\[
H(0,x) = f(x),\text{ and } H(1,x) = g(x)
\]
for all $x\in X$. Consider $\gamma : \I \to Lg_m(A)$ by $\gamma(t) := H(t,x_0)$, then $\gamma(0) = \gamma(1) = e_m$, so $\gamma$ induces a map $\overline{\gamma}: \T\to Lg_m(A)$. Since $m\geq gsr(\T A)$, $Lc_m(\T A) = Lg_m(\T A)$, and the map $t: GL_m(\T A)\to Lc_m(\T A)$ is surjective. Identifying $Lg_m(\T A)$ with $C(\T, Lg_m(A))$ and $GL_m(\T A)$ with $C(\T, GL_m(A))$, we see that the map $t$ induces a surjective map $t: C(\T,GL_m(A))\to C(\T, Lg_m(A))$. Hence, $\exists h:\T\to GL_m(A)$ such that
\[
h(z)e_m = \overline{\gamma}(z)
\]
for all $z\in \T$. In particular, $h(1)e_m = e_m$ so that $h(1)^{-1}e_m = e_m$. Define $\overline{h} : \T \to GL_m(A)$ by $\overline{h}(z) := h(z)h(1)^{-1}$, then $\overline{h}(1) = I_n$ and $\overline{h}(z)e_m = \overline{\gamma}(z)$ for all $z\in \T$. This map $\overline{h}$ induces a map $\widetilde{h}:\I\to GL_m(A)$ such that $\widetilde{h}(0) = \widetilde{h}(1) = I_n$ and $\widetilde{h}(t)e_m = \gamma(t)$ for all $t\in \I$. Now define a homotopy $\widetilde{H} : \I\times X\to Lg_m(A)$ by
\[
\widetilde{H}(t,x) := \widetilde{h}(t)^{-1}(H(t,x))
\]
Then $\widetilde{H}(0,x) = f(x)$ and $\widetilde{H}(1,x) = g(x)$ for all $x\in X$. Furthermore, $\widetilde{H}(t,x_0)= e_m$ for all $t\in \I$, so $\widetilde{H}$ defines a base-point preserving homotopy from $f$ to $g$. Thus, the map $F : [X,Lg_m(A)]_{\ast} \to [X,Lg_m(A)]$ is injective.
\end{proof}

We will be most interested in the following quantities, which appear in the estimates for the homotopical stable ranks of a pullback.

\begin{prop}\label{prop:inj_surj_bounds}
For a unital C*-algebra $D$,
\begin{align*}
\surj_0(D) &\leq csr(D), \\
\inj_0(D)&\leq gsr(\T D), \text{ and }\\
\max\{\inj_0(D), \surj_1(D)\} &\leq csr(\T D) \\
\end{align*}
\end{prop}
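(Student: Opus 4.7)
The plan is to deduce all three bounds from the tail of the long exact sequence (\ref{eqn:les_fibration}) associated to the fibration $TL_m(D) \to GL_m(D) \xrightarrow{t} Lc_m(D)$, in which the inclusion of the fibre is homotopy-equivalent to $\theta_D : GL_{m-1}(D) \to GL_m(D)$. Since $[\S^0, Y]_{\ast} = \pi_0(Y)$ and $[\S^1, Y]_{\ast} = \pi_1(Y)$ for any pointed $Y$, each bound asks what the stable-rank hypothesis implies about the connecting maps in (\ref{eqn:les_fibration}), and I will freely use the identifications $Lg_m(\T D) \cong C(\T, Lg_m(D))$ and $GL_m(\T D) \cong C(\T, GL_m(D))$ from Remark \ref{rem:split_epi}. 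The bound $\surj_0(D) \leq csr(D)$ is then immediate: for $m \geq csr(D)$, transitivity of $GL_m^0(D)$ on $Lg_m(D) = Lc_m(D)$ makes $\pi_0(Lc_m(D))$ trivial, so exactness at $\pi_0(GL_m(D))$ gives surjectivity of $(\theta_D)_{\ast}$.

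For $\inj_0(D) \leq gsr(\T D)$, fix $m \geq gsr(\T D)$; the split epimorphism $\T D \to D$ combined with property (5) of Remark \ref{rem:properties} also gives $m \geq gsr(D)$, so $Lc_m(D) = Lg_m(D)$. By exactness the required injectivity reduces to surjectivity of $t_{\ast} : \pi_1(GL_m(D)) \to \pi_1(Lc_m(D))$. A based loop $\overline{\gamma} : \T \to Lc_m(D)$ is precisely an element of $Lg_m(\T D)$, so transitivity of $GL_m(\T D)$ on $Lg_m(\T D)$ produces $h \in C(\T, GL_m(D))$ with $h(z) e_m = \overline{\gamma}(z)$. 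Since $h(1)$ then stabilises $e_m$, the renormalised loop $\widetilde{h}(z) := h(z) h(1)^{-1}$ is based at $I_m$ and still satisfies $t \circ \widetilde{h} = \overline{\gamma}$, giving the desired lift.

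For the third bound, $\inj_0(D) \leq csr(\T D)$ is immediate from the second bound and $gsr(\T D) \leq csr(\T D)$. For $\surj_1(D) \leq csr(\T D)$, fix $m \geq csr(\T D)$; exactness reduces the claim to showing that $t_{\ast} : \pi_1(GL_m(D)) \to \pi_1(Lc_m(D))$ is the zero homomorphism. Given a based loop $\ell : \T \to GL_m(D)$, transitivity of $GL_m^0(\T D)$ on $Lg_m(\T D)$ provides $g \in GL_m^0(\T D)$ with $g \cdot e_m = t \circ \ell$ together with a path $(g_s)_{s \in \I}$ in $C(\T, GL_m(D))$ from the constant map $I_m$ to $g$; evaluating pointwise on $e_m$ yields a free homotopy $(s,z) \mapsto g_s(z) e_m$ in $Lc_m(D)$ from the constant loop at $e_m$ to $t \circ \ell$. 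The one subtle point is the passage from this \emph{free} null-homotopy to a \emph{based} one, which will follow from the standard conjugation formula relating free and based homotopy classes of loops in a path-connected space; once that is in hand, the whole argument is a direct translation between stable-rank transitivity for $\T D$ and the lifting of based loops over $D$ through the fibration $t$.
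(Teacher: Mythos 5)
Your argument is correct and follows the paper's own route: all three bounds are extracted from the tail of the long exact sequence (\ref{eqn:les_fibration}) for the fibration $TL_m(D)\to GL_m(D)\to Lc_m(D)$, using the identification $Lg_m(\T D)\cong C(\T,Lg_m(D))$ and the transitivity supplied by the hypotheses, exactly as in the paper's proof (and the renormalization $h(z)h(1)^{-1}$ in the second bound is literally the paper's step). The only divergence is the $\surj_1(D)\leq csr(\T D)$ part, where the paper proves the stronger fact $\pi_1(Lg_m(D))=0$ by combining connectedness of $Lg_m(\T D)$ with the injectivity half of Lemma \ref{lem:forgetful}, whereas you only kill the image of $t_{\ast}$ on $\pi_1$ via the free-versus-based null-homotopy argument; this is sound (the basepoint track $s\mapsto g_s(1)e_m$ is a loop at $e_m$, and a based loop that is freely null-homotopic is based null-homotopic by the conjugation formula), and it even bypasses Lemma \ref{lem:forgetful} while yielding the same conclusion through exactness at $\pi_1(GL_m(D))$.
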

\begin{proof}
Consider the long exact sequence of homotopy groups arising from the fibration $TL_m(D)\to GL_m(D)\to Lc_m(D)$ (See Equation \ref{eqn:les_fibration})
\[
\ldots \to \pi_{n+1}(Lc_m(D))\to \pi_n(GL_{m-1}(D))\to \pi_n(GL_m(D))\to \pi_n(Lc_m(D))\to \ldots
\]
which ends in a sequence of pointed sets $\pi_0(GL_{m-1}(D))\to \pi_0(GL_m(D))\to \pi_0(Lc_m(D))$. \\

For the first inequality, suppose $m\geq csr(D)$, then $m\geq gsr(D)$, so $Lc_m(D) = Lg_m(D)$ is connected. Hence, the map $\pi_0(GL_{m-1}(D))\to \pi_0(GL_m(D))$ is surjective. \\

For the second inequality, suppose $m\geq gsr(\T D)$, then the natural map $t: GL_m(\T D) \to Lc_m(\T D)$ is surjective. As mentioned in Lemma \ref{lem:forgetful}, it follows that, for any loop $\overline{\gamma}:\T \to Lg_m(A)$ based at $e_m$, there exists a loop $\overline{h} : \T\to GL_m(A)$ such that $\overline{h}(1) = I_m$ and $\overline{h}(z)e_m = \overline{\gamma}(z)$ for all $z\in \T$. Hence, the map
\[
\pi_1(GL_m(D))\to \pi_1(Lg_m(D))
\]
is surjective. By exactness of the above sequence, this implies that the map $\pi_0(GL_{m-1}(D))\to \pi_0(GL_m(D))$ is injective. \\

For the third inequality, if $m\geq csr(\T D)$, then $m\geq csr(D)$, so $Lc_m(D) = Lg_m(D)$ is connected. Furthermore, $Lg_m(\T D) = C(\T, Lg_m(D))$, so we see that
\[
0 = \pi_0(Lg_m(C(\T,D)) = \pi_0(C(\T, Lg_m(D)) = [\T, Lg_m(D)]
\]
By Remark \ref{rem:properties} (2), $m\geq gsr(\T D)$, so $\pi_1(Lg_m(D))$ is trivial by Lemma \ref{lem:forgetful}. The exactness of the above sequence now implies that the map $\pi_1(GL_{m-1}(D))\to \pi_1(GL_m(D))$ is surjective, and the map $\pi_0(GL_{m-1}(D))\to \pi_0(GL_m(D))$ is injective.
\end{proof}

We are now ready to prove an estimate for the general stable rank of the pullback as in Equation \ref{eqn:pullback}

\begin{thm}\label{thm:gsr_pullback}
Given a pullback diagram as above with either $\gamma$ or $\delta$ surjective,
\[
gsr(A)\leq \max\{csr(B), csr(C), \inj_0(D)\}
\]
Furthermore, if $K_1(D) = 0$, then
\[
gsr(A) \leq \max\{gsr(B),gsr(C), \inj_0(D)\}
\]
\end{thm}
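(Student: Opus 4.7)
My plan is to apply Milnor's construction (Theorem \ref{thm:milnor_pullback}) to realize every stably free $A$-module as $M(B^{m-1}, C^{m-1}, h)$ for some $h \in GL_{m-1}(D)$, and then to use the injectivity of $\theta_\ast$ on $\pi_0$ to transfer the triviality data we receive at stable level $m$ (from Proposition \ref{prop:double_coset_iso}) down to level $m-1$, where we actually need it.

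Fix $m \geq \max\{csr(B), csr(C), \inj_0(D)\}$ and $P$ with $P \oplus A \cong A^m$. Applying $\alpha_\#, \beta_\#$ to this isomorphism and using $csr(B) \geq gsr(B)$ (and the analogous bound for $C$), Theorem \ref{thm:milnor_pullback} produces $P \cong M(B^{m-1}, C^{m-1}, h)$ with $h \in GL_{m-1}(D)$, while Proposition \ref{prop:double_coset_iso} applied to $P \oplus A \cong A^m$ yields $T \in GL_m(B)$ and $S \in GL_m(C)$ with $\mathrm{diag}(h,1) = \delta(T)\gamma(S)$. Assume without loss of generality that $\gamma$ is surjective. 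The main step is to argue that $[T]$ and $[S]$ both lie in the image of $\theta_\ast$ in $\pi_0$: since $m \geq csr(B)$, $GL_m^0(B)$ acts transitively on $Lg_m(B)$, so there is $U \in GL_m^0(B)$ with $(TU)e_m = e_m$, forcing
\[
TU = \begin{pmatrix} T_1 & 0 \\ c & 1 \end{pmatrix}, \qquad T_1 \in GL_{m-1}(B).
\]
Shrinking $c$ to zero gives a path in $GL_m(B)$ from $TU$ to $\mathrm{diag}(T_1,1)$, and since $[U] = 1$ in $\pi_0(GL_m(B))$ we get $[T] = \theta_\ast([T_1])$. The argument for $S$ is symmetric. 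Using naturality ($\delta\circ\theta_B = \theta_D\circ\delta$ and likewise for $\gamma$) together with the fact that $\theta_\ast$ is a group homomorphism on $\pi_0(GL_\bullet(D))$,
\[
\theta_\ast([h]) = [\mathrm{diag}(h,1)] = \delta_\ast([T])\gamma_\ast([S]) = \theta_\ast\bigl(\delta_\ast([T_1])\gamma_\ast([S_1])\bigr)
\]
in $\pi_0(GL_m(D))$. Injectivity of $\theta_\ast$, granted by $m \geq \inj_0(D)$, then gives $h \sim_h \delta(T_1)\gamma(S_1)$ in $GL_{m-1}(D)$; Lemma \ref{lem:homotopic_pullback} followed by Proposition \ref{prop:double_coset_iso} identifies $P$ with $A^{m-1}$.

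When $K_1(D) = 0$, only the weaker hypothesis $m \geq \max\{gsr(B), gsr(C), \inj_0(D)\}$ is needed. The same Milnor reduction yields $P \cong M(B^{m-1}, C^{m-1}, h)$; now for every $k \geq m$ the map $\pi_0(GL_{k-1}(D)) \to \pi_0(GL_k(D))$ is injective, so $\pi_0(GL_{m-1}(D))$ embeds into the colimit $K_1(D) = 0$, forcing $GL_{m-1}(D)$ to be path-connected and $h \sim_h I_{m-1}$; Lemma \ref{lem:homotopic_pullback} finishes the argument. The main obstacle lies in the first statement: one must arrange $[T]$ and $[S]$ to lie in the image of $\theta_\ast$ before injectivity can be invoked, and it is precisely this $\pi_0$-bookkeeping that forces the hypothesis $csr(B), csr(C) \leq m$ in place of the weaker $gsr$ bound.
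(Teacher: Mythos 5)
Your argument is correct and follows essentially the same route as the paper's first proof: Milnor patching to reduce to $h\in GL_{m-1}(D)$, the double-coset criterion to write $\theta_D(h)=\delta(T)\gamma(S)$, connecting $T$ and $S$ to images of $\theta$ in $\pi_0$, and then injectivity of $\theta_\ast$ plus Lemma \ref{lem:homotopic_pullback} to conclude. The only cosmetic difference is that you re-derive the fact that $m\geq csr(B)$ forces $[T]\in\mathrm{im}\,\theta_\ast$ directly via the transitive action of $GL_m^0(B)$ on $Lg_m(B)$, whereas the paper cites the inequality $\surj_0(B)\leq csr(B)$ from Proposition \ref{prop:inj_surj_bounds}.
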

\begin{proof}
Let $m\geq \max\{csr(B),csr(C),\inj_0(D)\}$, and $M$ be a projective $A$-module such that $M\oplus A\cong A^m$. Then write $M = M(P,Q,h)$ for some $P,Q$ and $h$ as in Theorem \ref{thm:milnor_pullback}. Then
\[
M(P\oplus B,Q\oplus C, h\oplus I_D)\cong A^m = M(B^m,C^m,I_{D^m})
\]
Hence,
\[
P\oplus B\cong \alpha_{\#}(A^m) \cong B^m
\]
Since $m\geq csr(B)\geq gsr(B)$, it follows that $P\cong B^{m-1}$. Similarly, $Q\cong C^{m-1}$. Hence, we may think of $h\in GL_{m-1}(D)$. Now consider the diagram
\[
\xymatrix{
GL_{m-1}(B)\ar[r]^{\delta_{m-1}}\ar[d]_{\theta_B} & GL_{m-1}(D)\ar[d]^{\theta_D} & GL_{m-1}(C)\ar[l]_{\gamma_{m-1}}\ar[d]^{\theta_C} \\
GL_m(B)\ar[r]^{\delta_m} & GL_m(D) & GL_m(C)\ar[l]_{\gamma_m}
}
\]
By Proposition \ref{prop:double_coset_iso}, $\theta_D(h)\sim I_m$, so $\exists b\in GL_m(B)$, and $c\in GL_m(C)$ such that $\theta_D(h) = \delta_m(b)\gamma_m(c)$. Since $m\geq csr(B)$, Proposition \ref{prop:inj_surj_bounds} implies that $m\geq \surj_0(B)$, so $\exists b'\in GL_{m-1}(B)$ such that $b\sim_h \theta_B(b')$. Hence,
\[
\delta_m(b)\sim_h \delta_m(\theta_B(b')) = \theta_D(\delta_{m-1}(b'))
\]
Similarly, $\exists c'\in GL_{m-1}(C)$ such that $\gamma_m(c)\sim_h \theta_D(\gamma_{m-1}(c'))$ so that
\[
\theta_D(h) \sim_h \theta_D(\delta_{m-1}(b')\gamma_{m-1}(c'))
\]
Since $m\geq \inj_0(D)$, $h\sim_h \delta_{m-1}(b')\gamma_{m-1}(c')$ and so by Lemma \ref{lem:homotopic_pullback} and Proposition \ref{prop:double_coset_iso}, we have
\[
M \cong M(B^{m-1},C^{m-1},h)\cong A^{m-1}
\]
Hence, $m\geq gsr(A)$ as required. \\

Now consider the special case where $K_1(D) = 0$: We follow the proof of the first part of the theorem until we obtain $h\in GL_{m-1}(D)$. Note that, up until this point, we only used the fact that $m\geq \max\{gsr(B),gsr(C)\}$. Since $m\geq \inj_0(D)$ and $K_1(D) = 0$, it follows that $h\sim_h I_{D^{m-1}}$, so that $M \cong M(B^{m-1},C^{m-1},h)\cong A^{m-1}$ by Lemma \ref{lem:homotopic_pullback}. We conclude that $m\geq gsr(A)$ as required.
\end{proof}

Note that Example \ref{ex:sr_pullback} shows that the term $\inj_0(D)$ on the right hand side cannot be dropped, and furthermore, that equality can hold in the above estimate, since $\inj_0(C(\S^4)) = gsr(C(\S^5)) = 4$.

\subsection{Connected Stable Rank}
Given a pullback diagram $A = B\oplus_D C$ as before, we wish to estimate $csr(A)$ in terms of $csr(B)$ and $csr(C)$. To begin with, we provide an alternate proof of the estimate of $gsr(A)$. This will help guide us to a proof for the corresponding estimate for $csr(A)$ as well.

\begin{lemma}\label{lem:gln_pullback}
Suppose $S_1\in GL_n(B), S_2\in GL_n(C)$ such that $\delta(S_1) = \gamma(S_2)$, then there exists a unique $T\in GL_n(A)$ such that $\alpha(T) = S_1, \beta(T) = S_2$. 
\end{lemma}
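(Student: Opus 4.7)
The statement is essentially saying that $M_n$ commutes with the pullback construction, so the plan is to exploit this functoriality and then check invertibility.

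First, I would observe that the assignment $(a_{ij}) \mapsto ((\alpha(a_{ij}))_{ij}, (\beta(a_{ij}))_{ij})$ identifies $M_n(A)$ with the pullback C*-algebra
\[
M_n(B) \oplus_{M_n(D)} M_n(C) = \{(X,Y) \in M_n(B) \oplus M_n(C) : \delta_n(X) = \gamma_n(Y)\}.
\]
This is immediate from entrywise application of the definition of $A$, and it requires no hypothesis on surjectivity. Consequently, the hypothesis $\delta_n(S_1) = \gamma_n(S_2)$ produces a unique $T \in M_n(A)$ with $\alpha_n(T) = S_1$ and $\beta_n(T) = S_2$, by the universal property of the pullback (or, more concretely, by defining $T$ entrywise: if $S_1 = (s_{ij}^{(1)})$ and $S_2 = (s_{ij}^{(2)})$, then $T_{ij} := (s_{ij}^{(1)}, s_{ij}^{(2)}) \in A$).

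Next, I would verify that $T$ is invertible in $M_n(A)$. Since $\delta$ and $\gamma$ are unital $\ast$-homomorphisms, they carry invertibles to invertibles and commute with inversion, so
\[
\delta_n(S_1^{-1}) = \delta_n(S_1)^{-1} = \gamma_n(S_2)^{-1} = \gamma_n(S_2^{-1}).
\]
Thus $(S_1^{-1}, S_2^{-1})$ is again a compatible pair, and the same construction produces a unique $T' \in M_n(A)$ with $\alpha_n(T') = S_1^{-1}$ and $\beta_n(T') = S_2^{-1}$. Applying the (functorial) projections $\alpha_n$ and $\beta_n$ to the product $TT' \in M_n(A)$ gives $S_1 S_1^{-1} = I_n$ and $S_2 S_2^{-1} = I_n$ respectively, and similarly for $T'T$; by the uniqueness part of the pullback property, $TT' = T'T = I_n \in M_n(A)$. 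Hence $T \in GL_n(A)$.

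Uniqueness of $T$ in $GL_n(A)$ is inherited from the uniqueness already established in $M_n(A)$: any $T \in M_n(A)$ is determined by $\alpha_n(T)$ and $\beta_n(T)$ because $(b,c) \in A$ is determined by the pair $(b,c) \in B \oplus C$. No obstacle arises here; the lemma is essentially a transparent application of functoriality of $M_n$ with respect to pullbacks, and the surjectivity hypothesis on $\gamma$ or $\delta$ plays no role in the argument (it will only enter when this lemma is used in the subsequent estimate of $csr(A)$).
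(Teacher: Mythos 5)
Your proof is correct and takes essentially the same route as the paper: identify $M_n(A)$ with the pullback $M_n(B)\oplus_{M_n(D)}M_n(C)$, lift the compatible pair $(S_1,S_2)$ and the compatible pair of inverses, and use uniqueness of the lift (equivalently $\ker(\alpha)\cap\ker(\beta)=\{0\}$) to conclude the lifts are mutually inverse. The only cosmetic difference is that you establish the matrix-level pullback by the direct entrywise argument rather than, as the paper does, by invoking nuclearity of $M_n(\C)$ and Pedersen's tensor-product theorem; both give the same identification, and you are right that surjectivity of $\gamma$ or $\delta$ plays no role here.
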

\begin{proof}
Since $M_n(\C)$ is nuclear, we obtain a pullback diagram
\[
\xymatrix{
M_n(A)\ar[r]^{\alpha}\ar[d]^{\beta} & M_n(B)\ar[d]^{\delta} \\
M_n(C)\ar[r]^{\gamma} & M_n(D)
}
\]
by \cite[Theorem 3.9]{pedersen}. Hence, there exists unique $T\in M_n(A)$ such that $\alpha(T) = S_1, \beta(T) = S_2$. We show that $T \in GL_n(A)$: To see this, let $S_1' \in GL_n(B)$ such that $S_1'S_1 = S_1S_1' = I_{B^n}$, and similarly, $S_2'\in GL_n(C)$ such that $S_2S_2' = S_2'S_2 = I_{C^n}$. Then
\[
\delta(S_1')\delta(S_1) = \delta(S_1')\gamma(S_2) = I_{D^n} = \gamma(S_2')\gamma(S_2)
\]
Hence, $\delta(S_1') = \gamma(S_2')$, so $\exists T'\in M_n(A)$ such that $\alpha(T') = S_1', \beta(T') = S_2'$. Now note that
\[
\alpha(T)\alpha(T') - \alpha(I_{A^n}) = S_1S_1'-I_{B^n} = 0
\]
so $TT'-I_{A^n} \in \ker(\alpha)$. Similarly, $TT' - I_{A^n}\in \ker(\beta)$. Since $\ker(\alpha)\cap\ker(\beta) = \{0\}$, it follows that $TT'= I_{A^n}$, and similarly, $T'T = I_{A^n}$. Hence, $T\in GL_n(A)$. 
\end{proof}

We now prove a fact that is probably well-known, but one that we have not found an exact reference for. Since it is crucial to our arguments, we include a proof here. Note that, for a non-unital C*-algebra $A$, $A^+$ refers to its unitization.

\begin{prop}\label{prop:path_lifting}
If $\gamma : C\to D$ is a unital, surjective $\ast$-homomorphism, then it has the path lifting property for invertibles: Given $n\in \N$ and a path $g:[0,1]\to GL_n(D)$ and $S \in GL_n(C)$ such that $g(1) = \gamma(S)$, there is a path $f:[0,1]\to GL_n(C)$ such that $f(1) = S$ and $\gamma\circ f = g$. 
\end{prop}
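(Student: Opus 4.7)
The plan is to lift $g$ piecewise by trivializing $\gamma_n \colon GL_n(C) \to GL_n(D)$ locally near the identity via the matrix exponential, then patching backwards from $t=1$. First I would reduce to the case $S = g(1) = I_n$: replacing $g$ by $g_1(t) := g(t)\gamma(S)^{-1}$, it suffices to produce a continuous lift $f_1 \colon [0,1] \to GL_n(C)$ of $g_1$ with $f_1(1) = I_n$, for then $f(t) := f_1(t) S$ satisfies $\gamma \circ f = g$ and $f(1) = S$.

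By uniform continuity I would pick a partition $0 = t_0 < t_1 < \cdots < t_N = 1$ fine enough that $g_1(t) g_1(t_{i+1})^{-1}$ lies in the open set $U := \{d \in M_n(D) : \|I_n - d\| < 1\}$ for every $t \in [t_i, t_{i+1}]$. On $U$ the matrix logarithm $\log(d) := -\sum_{k \geq 1} (I_n - d)^k / k$ converges to a continuous inverse of $\exp$. I then build $f_1$ by downward induction on $i$: assuming $f_1$ is defined on $[t_{i+1}, 1]$ with $\gamma(f_1(t_{i+1})) = g_1(t_{i+1})$, set $e_i(t) := \log(g_1(t) g_1(t_{i+1})^{-1})$, a continuous path $[t_i, t_{i+1}] \to M_n(D)$ with $e_i(t_{i+1}) = 0$. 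Given a continuous lift $e_i' \colon [t_i, t_{i+1}] \to M_n(C)$ of $e_i$ with $e_i'(t_{i+1}) = 0$, extend by $f_1(t) := \exp(e_i'(t)) f_1(t_{i+1})$. Then $\gamma(f_1(t)) = \exp(e_i(t)) g_1(t_{i+1}) = g_1(t)$, the values lie in $GL_n(C)$ because $\exp$ takes $M_n(C)$ into $GL_n(C)$, and the new piece agrees with the old at $t_{i+1}$.

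The one substantive obstacle is producing the continuous lift $e_i'$ of the continuous path $e_i$ through the surjection $\gamma_n \colon M_n(C) \to M_n(D)$. Since $\gamma_n$ is a bounded linear surjection of Banach spaces, the Bartle--Graves selection theorem supplies a continuous (nonlinear) section $\sigma \colon M_n(D) \to M_n(C)$ of $\gamma_n$; then $e_i'(t) := \sigma(e_i(t)) - \sigma(0)$ is a continuous lift with $e_i'(t_{i+1}) = 0$, completing the argument.
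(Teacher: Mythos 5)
Your argument is correct, but it takes a genuinely different route from the paper. The paper works globally: after the same normalization $g(1)=I_n$, it views $g$ as an invertible element of the unitization of the cone $\mathcal{C}M_n(D)=C_0[0,1)\otimes M_n(D)$, shows via the reparametrization $H(s,t)=g(1-s(1-t))$ that this element lies in the connected component of the identity, and then uses that the induced map $\mathcal{C}M_n(C)^+\to \mathcal{C}M_n(D)^+$ is surjective together with the standard fact that elements of $GL^0$ lift through surjective unital $\ast$-homomorphisms; the endpoint condition $f(1)=I_n$ then comes for free from the scalar condition at $t=1$ in the unitized cone. You instead unfold the exponential mechanism that underlies that $GL^0$-lifting fact and apply it directly along the path: a fine partition, logarithms of the increments $g_1(t)g_1(t_{i+1})^{-1}$, lifting the resulting paths in $M_n(D)$ to $M_n(C)$, and patching downward from $t=1$; all the verifications (the section satisfies $\gamma_n(\sigma(0))=0$, so $e_i'$ is a genuine lift vanishing at $t_{i+1}$, and $\gamma_n\circ\exp=\exp\circ\gamma_n$) check out. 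Your approach is more hands-on and constructive, at the cost of importing the Bartle--Graves selection theorem; that citation is heavier than needed, since you only lift a path, which already follows from surjectivity of $C([t_i,t_{i+1}])\otimes M_n(C)\to C([t_i,t_{i+1}])\otimes M_n(D)$ (the same tensor-with-a-nuclear-algebra fact the paper uses for the cone) followed by subtracting the lift's value at $t_{i+1}$, which lies in $\ker\gamma_n$ because $e_i(t_{i+1})=0$. In short: the paper's proof is shorter given the C*-machinery, yours makes the local lifting near the identity explicit and avoids the cone-algebra bookkeeping.
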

\begin{proof}
Consider the cone over $M_n(D)$, $\mathcal{C}M_n(D) = C_0[0,1)\otimes M_n(D)$, and observe that
\[
\mathcal{C}M_n(D)^+ = \{g : [0,1]\to M_n(D) : g(1) \in \C I_n\}
\]
Since $C_0[0,1)\otimes M_n(\C)$ is exact, the induced map $\gamma : \mathcal{C}M_n(C)^+ \to \mathcal{C}M_n(D)^+$ is surjective. Suppose $g:[0,1]\to GL_n(D)$ and $S\in GL_n(C)$ are such that $g(1) = \gamma(S)$. Replacing $g$ by $g(\cdot)\gamma(S^{-1})$ we may assume without loss of generality that $g(1) = I$. Then $H:\I\times \I \to GL_n(D)$ given by $H(s,t) = g(1-s(1-t))$ is a continuous map such that $H(0,t) = I, H(1,t) = g(t)$, and $H(s,1) = I$. Thus, $H$ defines a path in $GL(\mathcal{C}M_n(D)^+)$ such that $H(0) = I, H(1) = g$. Hence, $g\in GL^0_1(\mathcal{C}M_n(D)^+)$. So $\exists f\in GL(\mathcal{C}M_n(C)^+)$ such that $\gamma(f) = g$. Since $f(1) \in \C I_n$ and $\gamma$ is linear, it follows that $f(1) = I_n$. Thus, $f$ satisfies the required conditions. 
\end{proof}

By way of a warm-up for the estimate of the connected stable rank, we now provide a second proof of the estimate of the general stable rank from Theorem \ref{thm:gsr_pullback}.

\begin{thm}\label{thm:gsr_pullback_alt}
Given a pullback diagram as above with either $\gamma$ or $\delta$ surjective,
\[
gsr(A) \leq \max\{csr(B),csr(C),\inj_0(D)\}
\]
\end{thm}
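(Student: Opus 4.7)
The plan is to replace the Milnor-module machinery of the first proof with a direct argument on unimodular vectors, assembled via Lemma~\ref{lem:gln_pullback}. Set $n := \max\{csr(B), csr(C), \inj_0(D)\}$, fix $m \geq n$, and take $\underline{a} \in Lg_m(A)$, which we write as $\underline{a} = (\underline{b}, \underline{c})$ with $\underline{b} \in Lg_m(B)$, $\underline{c} \in Lg_m(C)$ and $\delta(\underline{b}) = \gamma(\underline{c}) =: \underline{d}$. The goal is to produce $T \in GL_m(A)$ with $T\underline{a} = e_m$; by Lemma~\ref{lem:gln_pullback}, it suffices to find a compatible pair $(S_1, S_2) \in GL_m(B) \times GL_m(C)$ with $\delta(S_1) = \gamma(S_2)$, $S_1 \underline{b} = e_m$, and $S_2 \underline{c} = e_m$, since such a pair lifts uniquely to the required $T$.

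As a first attempt, the hypotheses $m \geq csr(B)$ and $m \geq csr(C)$ furnish $S_1 \in GL_m^0(B)$ and $S_2 \in GL_m^0(C)$ with $S_1 \underline{b} = e_m = S_2 \underline{c}$, but $\delta(S_1)$ and $\gamma(S_2)$ are only forced to agree on $\underline{d}$. Without loss of generality, assume $\gamma$ is surjective, and set $h := \delta(S_1)\,\gamma(S_2)^{-1}$; then $h \cdot e_m = e_m$, so $h \in TL_m(D)$, and moreover $h \in GL_m^0(D)$ since both factors are. The strategy is to lift $h$ to some $\tilde{h} \in TL_m(C)$ and then replace $S_2$ by $S_2' := \tilde{h}\, S_2$; this preserves $S_2' \underline{c} = \tilde{h}\, e_m = e_m$ while forcing $\gamma(S_2') = h\,\gamma(S_2) = \delta(S_1)$, so Lemma~\ref{lem:gln_pullback} finishes the argument.

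The crux is producing such a lift, and this is where the $\inj_0(D)$ hypothesis enters. Decompose
\[
h = \theta_D(h_{11}) \cdot \begin{pmatrix} I & 0 \\ h_{21} & 1 \end{pmatrix},
\]
where $h_{11} \in GL_{m-1}(D)$ is the upper-left block and the second factor is a product of elementary matrices, hence lies in $GL_m^0(D)$. Since $h \in GL_m^0(D)$, it follows that $\theta_D(h_{11}) \in GL_m^0(D)$; combined with $m \geq \inj_0(D)$, i.e.\ injectivity of $\pi_0(GL_{m-1}(D)) \to \pi_0(GL_m(D))$, this yields $h_{11} \in GL_{m-1}^0(D)$. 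Proposition~\ref{prop:path_lifting} (applied to a path in $GL_{m-1}(D)$ from $I$ to $h_{11}$, using surjectivity of $\gamma$) lifts $h_{11}$ to some $\tilde{h}_{11} \in GL_{m-1}(C)$, and surjectivity of $\gamma$ at the algebra level lifts $h_{21}$ to some $\tilde{h}_{21} \in C^{m-1}$; assembling these produces $\tilde{h} \in TL_m(C)$ with $\gamma(\tilde{h}) = h$, as required.

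The main obstacle I anticipate is the component bookkeeping in the previous step --- specifically, the subtlety that $h \in GL_m^0(D) \cap TL_m(D)$ is not a priori in $TL_m^0(D)$, and one genuinely must invoke $m \geq \inj_0(D)$ to descend the connectedness information from $GL_m$ to $GL_{m-1}$. Everything else is essentially formal: Lemma~\ref{lem:gln_pullback} assembles $(S_1, S_2')$ into $T \in GL_m(A)$, and $T\underline{a} = (S_1 \underline{b}, S_2' \underline{c}) = e_m$, so $GL_m(A)$ acts transitively on $Lg_m(A)$ for every $m \geq n$, yielding $gsr(A) \leq n$.
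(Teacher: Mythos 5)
Your proposal is correct and follows essentially the same route as the paper's proof of this theorem: reduce to finding a compatible pair $(S_1,S_2')$ via Lemma~\ref{lem:gln_pullback}, observe that $h=\delta(S_1)\gamma(S_2)^{-1}$ lies in $TL_m(D)\cap GL_m^0(D)$, use $m\geq \inj_0(D)$ to place the $(m-1)\times(m-1)$ block in $GL_{m-1}^0(D)$, and lift it together with the corner entries through the surjection $\gamma$ (via Proposition~\ref{prop:path_lifting}) to correct $S_2$. The only cosmetic difference is that you factor the $TL$-matrix through elementary matrices where the paper uses a linear homotopy of the corner vector; the substance is identical.
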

\begin{proof}
Suppose $n\geq \max\{csr(B),csr(C),inj_0(D)\}$, then we wish to prove that $GL_n(A)$ acts transitively on $Lg_n(A)$. To this end, fix $v\in Lg_n(A)$, so that $\alpha(v) \in Lg_n(B)$. Since $n\geq csr(B),\exists S_1\in GL_n^0(B)$ such that $S_1\alpha(v) = e_n$. Hence, $\delta(S_1)w = e_n$ where $w = \delta(\alpha(v)) = \gamma(\beta(v))$. Similarly, $\exists S_2\in GL_n^0(C)$ such that $S_2\beta(v) = e_n$. Then $\gamma(S_2),\delta(S_1)\in GL_n^0(D)$, so $\gamma(S_2)\sim_h \delta(S_1)$. \\

Consider $S:= \delta(S_1)\gamma(S_2)^{-1}$, then $S\in GL_n^0(D)$ and $Se_n = e_n$. Hence, $S$ has the form
\[
S = \begin{pmatrix}
S' & 0 \\
c & 1
\end{pmatrix}
\]
where $c\in D^{n-1}$ and $S'\in GL_{n-1}(D)$. Since $S\sim_h I_{D^n}$, it follows that
\[
\begin{pmatrix}
S' & 0 \\
0 & 1
\end{pmatrix}\sim_h S \sim_h I_{D^n}
\]
where the first homotopy linearly sends $0$ to $c$. Since $n\geq inj_0(D), S' \sim_h I_{D^{n-1}}$ via a path $g : \I\to GL_{n-1}(D)$ such that $g(0) = S', g(1) = I$. By the path-lifting property of $\gamma$, there is a path $h : \I\to GL_{n-1}(C)$ such that $h(1) = I$ and $\gamma\circ h = g$. Let $c'\in C^{n-1}$ be such that $\gamma(c') = c$, and consider the element
\[
S_2' := \begin{pmatrix}
h(0) & 0\\
c' & 1
\end{pmatrix}S_2 \in GL_n^0(C)
\]
Then
\[
\gamma(S_2') = \begin{pmatrix}
\gamma(h(0)) & 0\\
c & 1 
\end{pmatrix} \gamma(S_2) = \begin{pmatrix}
S' & 0 \\
c & 1
\end{pmatrix}\gamma(S_2) = S\gamma(S_2) = \delta(S_1)
\]
By Lemma \ref{lem:gln_pullback}, $\exists T\in GL_n(A)$ such that $\alpha(T) = S_1, \beta(T) = S_2'$. Furthermore,
\[
S_2'(\beta(v)) = \begin{pmatrix}
h(0) & 0 \\
c' & 1
\end{pmatrix}S_2(\beta(v)) = \begin{pmatrix}
h(0) & 0 \\
c' & 1
\end{pmatrix}e_n = e_n
\]
Hence, $Tv - e_n \in A^n$ has the property that $\beta(Tv - e_n) = 0$, and $\alpha(Tv - e_n)=0$. Since $\ker(\alpha)\cap \ker(\beta) = \{0\}$, it follows that $Tv = e_n$, so $gsr(A) \leq n$.
\end{proof}

We now wish to determine conditions under which the operator $T$ produced in the above proof may be chosen to be in $GL_n^0(A)$. We begin with a lemma. Given a C*-algebra $C$ and $n\in \N$, we write $\iota : M_{n-1}(C)\to M_n(C)$ for the natural inclusion map, and define
\[
X_n(C) := \{f : [0,1]\to M_n(C) : f(0) \in \iota(M_{n-1}(C)), f(1) = 0\}
\]
For a C*-algebra $A$, we write $\mathcal{C} A$ for the cone $C_0[0,1)\otimes A$
\begin{lemma}
Let $\gamma : C\to D$ be a unital, surjective $\ast$-homomorphism, and $n\in \N$ be fixed. Then the induced map $\gamma : X_n(C)\to X_n(D)$ is also surjective.
\end{lemma}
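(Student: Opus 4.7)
The plan is to split any $g\in X_n(D)$ into two pieces that can be lifted separately: a simple ``boundary'' piece carrying the value $g(0)\in\iota(M_{n-1}(D))$, and a ``suspension'' piece vanishing at both endpoints, for which lifting is automatic by the same nuclearity argument that was used in Proposition \ref{prop:path_lifting}.

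First I would write $g(0)=\iota(a)$ for the unique $a\in M_{n-1}(D)$, and use surjectivity of the induced map $\gamma_{n-1}:M_{n-1}(C)\to M_{n-1}(D)$ to lift $a$ to some $b\in M_{n-1}(C)$. Then set
\[
g_0(t):=(1-t)\iota(a),\qquad f_0(t):=(1-t)\iota(b),
\]
so that $f_0\in X_n(C)$ and $\gamma\circ f_0=g_0$ by construction. The function $g-g_0:[0,1]\to M_n(D)$ then vanishes at both $t=0$ and $t=1$, hence belongs to the suspension $C_0((0,1))\otimes M_n(D)=SM_n(D)$.

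Next, since $C_0((0,1))\otimes M_n(\C)$ is nuclear, tensoring the short exact sequence $0\to\ker\gamma\to C\to D\to 0$ with it preserves exactness, so the induced map $SM_n(\gamma):SM_n(C)\to SM_n(D)$ is surjective. I would lift $g-g_0$ to some $h\in SM_n(C)$, which in particular satisfies $h(0)=h(1)=0$, and set $f:=f_0+h$. Then $\gamma\circ f=g_0+(g-g_0)=g$, while $f(0)=\iota(b)\in\iota(M_{n-1}(C))$ and $f(1)=0$, so $f\in X_n(C)$, as required. The only step needing real care is the surjectivity of $SM_n(\gamma)$, which is precisely where nuclearity enters, exactly as in Proposition \ref{prop:path_lifting}; the rest is a routine assembly of the two pieces.
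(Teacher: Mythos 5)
Your proof is correct, and it takes a genuinely different route from the paper. You lift a given $g\in X_n(D)$ by hand: first the boundary value $g(0)=\iota(a)$ is lifted entrywise to $b\in M_{n-1}(C)$ and extended linearly to the cone-like element $f_0(t)=(1-t)\iota(b)$, and then the remainder $g-g_0$, which vanishes at both endpoints, is lifted through the suspension $C_0((0,1))\otimes M_n(C)\to C_0((0,1))\otimes M_n(D)$; summing the two lifts gives an element of $X_n(C)$ mapping to $g$. All steps check out (note $f(0)=\iota(b)$ since the suspension lift vanishes at $0$, and $\gamma_n\circ\iota=\iota\circ\gamma_{n-1}$ gives $\gamma\circ f_0=g_0$). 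In fact your one ``careful'' step is even easier than you suggest: surjectivity of $\mathrm{id}\otimes\gamma$ on any tensor product by a fixed C*-algebra is automatic, since the image of a $\ast$-homomorphism is closed and contains the algebraic tensor product, which is dense --- no nuclearity or exactness is needed for that direction. The paper argues differently: it exhibits $X_n(\cdot)$ as a pullback of the cone $\mathcal{C}M_n(\cdot)$ and $M_{n-1}(\cdot)$ over $M_n(\cdot)$ and invokes Pedersen's criterion for surjectivity of maps induced on pullbacks, which requires verifying a technical condition on intersections of images of the constituent algebras. Your decomposition argument is more elementary and self-contained, while the paper's fits the pullback formalism it uses throughout; either proof suffices for the later application in Lemma \ref{lem:loop_lifting}.
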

\begin{proof}
Note that $X_n(C)$ is a pullback
\[
\xymatrix{
X_n(C)\ar[d]\ar[r] &  M_{n-1}(C)\ar[d]^{\iota}\\
\mathcal{C} M_n(C)\ar[r]^-{\rho} & M_n(C)
}
\]
where $\rho(f) = f(0)$. We now wish to appeal to \cite[Theorem 9.1]{pedersen}. To do this, consider the commuting diagram of short exact sequences
\[
\xymatrix{
0\ar[r] & \mathcal{C} \ker(\gamma_n)\ar[r]\ar[d]^{\overline{\rho}} & \mathcal{C} M_n(C)\ar[r]\ar[d]^{\rho} & \mathcal{C} M_n(D)\ar[r]\ar[d]^{\widetilde{\rho}} & 0 \\
0\ar[r] & \ker(\gamma_n)\ar[r] & M_n(C)\ar[r] & M_n(D)\ar[r] & 0 \\
0\ar[r] & \ker(\gamma_{n-1})\ar[u]_{\overline{\iota}}\ar[r] & M_{n-1}(C)\ar[u]_{\iota}\ar[r] & M_{n-1}(D)\ar[r]\ar[u]_{\widetilde{\iota}} & 0
}
\]
where $\gamma_k : M_k(C) \to M_k(D)$ is the map induced by $\gamma$, and the vertical maps are induced by $\rho$ and $\iota$. In order to conclude that the map $\gamma : X_n(C)\to X_n(D)$ is surjective, we must verify that $E = F$ where
\begin{align*}
E &:= \rho(\mathcal{C} M_n(C))\cap \iota(M_{n-1}(C))\cap \ker(\gamma_n), \text{ and } \\
F &:= \overline{\rho}(\mathcal{C} \ker(\gamma_n))\cap \iota(M_{n-1}(C)) + \rho(\mathcal{C} M_n(C))\cap \overline{\iota}(\ker(\gamma_{n-1}))
\end{align*}
Note that $\iota(M_{n-1}(C))\cap \ker(\gamma_n) = \overline{\iota}(\ker(\gamma_{n-1}))$, so
\[
E = \rho(\mathcal{C} M_n(C))\cap \iota(M_{n-1}(C))\cap \ker(\gamma_n) = \rho(\mathcal{C} M_n(C))\cap \overline{\iota}(\ker(\gamma_{n-1})) \subset F
\]
Furthermore, $\overline{\rho}$ is the restriction of $\rho$ to $\mathcal{C}\ker(\gamma_n)$, so $\overline{\rho}(\mathcal{C}\ker(\gamma_n)) \subset \rho(\mathcal{C}M_n(C))\cap \ker(\gamma_n)$. Hence,
\[
\overline{\rho}(\mathcal{C} \ker(\gamma_n))\cap \iota(M_{n-1}(C)) \subset \rho(\mathcal{C}M_n(C))\cap \ker(\gamma_n)\cap \iota(M_{n-1}(C)) = E
\]
and $F\subset E$ also holds. Thus, \cite[Theorem 9.1]{pedersen} applies, and $\gamma : X_n(C)\to X_n(D)$ is surjective.
\end{proof}

We conclude that the induced map $\gamma : X_n(C)^+ \to X_n(D)^+$ is also surjective, and note that
\[
X_n(C)^+ = \{f : [0,1]\to M_n(D) : \exists \lambda \in \C \text{ such that } f(0) \in \iota(M_{n-1}(D)) + \lambda I_n, f(1) = \lambda I_n\}
\]
In what follows, note that $\theta^n_C : GL_{n-1}(C)\to GL_n(C)$ denotes the natural inclusion of groups.
\begin{lemma}\label{lem:loop_lifting}
Let $\gamma : C\to D$ be a unital, surjective $\ast$-homomorphism, and suppose $n\in \N$ is such that $(\theta^n_D)_{\ast} : \pi_1(GL_{n-1}(D))\to \pi_1(GL_n(D))$ is surjective. If $g:\I \to GL_n(D)$ is a path such that $g(0) = g(1) = I_n$, then $\exists h:\I \to GL_n(C)$ such that $h(0) \in \theta^n_C(GL_{n-1}(C)), h(1) = I_n$, and $\gamma\circ h = g$
\end{lemma}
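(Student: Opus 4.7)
The plan is to recast the lifting problem in terms of the C*-algebra $X_n$ introduced in the preceding lemma, so that Proposition \ref{prop:path_lifting} applies. The key observation is that a continuous function $f : [0,1] \to M_n(D)$ lies in $GL_1(X_n(D)^+)$ with scalar part $\lambda = 1$ precisely when $f(t) \in GL_n(D)$ for all $t$, $f(1) = I_n$, and $f(0) \in \theta_D^n(GL_{n-1}(D))$ (since $\iota(M_{n-1}(D)) + I_n$ meets $GL_n(D)$ in exactly $\theta_D^n(GL_{n-1}(D))$). In particular, the given loop $g$ itself is an element of $GL_1(X_n(D)^+)$, and any preimage $h \in GL_1(X_n(C)^+)$ under $\gamma$ will automatically have scalar part $\lambda = 1$ by unitality of $\gamma$, and so produce the desired path $h : \mathbb{I} \to GL_n(C)$.

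The crux is to show $g \in GL_1^0(X_n(D)^+)$, i.e., to build a path from $g$ to the constant loop $I_n$ inside $GL_1(X_n(D)^+)$. I would do this in two stages using the $\pi_1$-hypothesis. Surjectivity of $(\theta_D^n)_* : \pi_1(GL_{n-1}(D)) \to \pi_1(GL_n(D))$ gives a loop $\ell : \mathbb{I} \to GL_{n-1}(D)$ at $I_{n-1}$ such that $g$ is homotopic rel endpoints (in $GL_n(D)$) to $\theta_D^n \circ \ell$. The rel-endpoints homotopy $H_1(s,t)$ is then a path in $GL_1(X_n(D)^+)$ from $g$ to $\theta_D^n \circ \ell$, since each slice $H_1(s,\cdot)$ automatically satisfies the defining constraints of $X_n(D)^+$ with $\lambda = 1$ and vanishing $\iota$-part at $t = 0$. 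Next, $H_2(s,t) := \theta_D^n(\ell(t + s(1-t)))$ defines a further path from $\theta_D^n \circ \ell$ to the constant $I_n$; each slice lies in $GL_1(X_n(D)^+)$ because $H_2(s,1) = I_n$ and $H_2(s,0) - I_n = \iota(\ell(s) - I_{n-1}) \in \iota(M_{n-1}(D))$. Concatenating $H_1$ and $H_2$ exhibits $g \in GL_1^0(X_n(D)^+)$.

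Since the preceding lemma shows that $\gamma : X_n(C)^+ \to X_n(D)^+$ is a unital surjective $\ast$-homomorphism, Proposition \ref{prop:path_lifting} applied with the index $1$ lifts the path from $g$ to $I$ constructed above, starting from the identity of $GL_1(X_n(C)^+)$. The starting point of the lifted path is the desired element $h \in GL_1(X_n(C)^+)$ with $\gamma(h) = g$. Unpacking: the scalar part of $h$ is forced to be $1$ by $\gamma(h(1)) = g(1) = I_n$, whence $h(1) = I_n$; the condition $h(0) \in \iota(M_{n-1}(C)) + I_n$ together with invertibility of $h(0)$ places $h(0)$ in $\theta_C^n(GL_{n-1}(C))$; and pointwise invertibility makes $h$ a continuous path $\mathbb{I} \to GL_n(C)$ with $\gamma \circ h = g$.

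The main obstacle is the verification that $g \in GL_1^0(X_n(D)^+)$; the $\pi_1$-surjectivity hypothesis is used precisely to replace the arbitrary loop $g$ by one factoring through $\theta_D^n$, after which the contraction to the identity within the constraints of $X_n(D)^+$ becomes explicit. Once this is settled, the preceding lemma together with Proposition \ref{prop:path_lifting} dispatches the remainder of the proof.
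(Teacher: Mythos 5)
Your proof is correct and follows essentially the same route as the paper: both recast the lifting problem in $X_n(\cdot)^+$, use the surjectivity of the induced map $X_n(C)^+\to X_n(D)^+$ from the preceding lemma together with Proposition \ref{prop:path_lifting}, and use $\pi_1$-surjectivity to replace $g$ (up to based homotopy) by a loop of the form $\theta^n_D\circ\ell$. The only organizational difference is that where the paper first lifts $\ell$ to a path in $GL_{n-1}(C)$ and anchors the lifted path at $\theta^n_C$ of that lift, you instead contract $\theta^n_D\circ\ell$ to the constant identity inside $GL(X_n(D)^+)$ (exploiting that the $t=0$ constraint only asks for membership in $\iota(M_{n-1}(D))+I_n$) and anchor the lift at the unit of $X_n(C)^+$, which saves one application of the path-lifting proposition.
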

\begin{proof}
Consider $\overline{g} : \T \to GL_n(D)$ induced by $g$. Since $(\theta^n_D)_{\ast} : \pi_1(GL_{n-1}(D))\to \pi_1(GL_n(D))$ is surjective, $\exists f : \T \to GL_{n-1}(D)$ such that $f(1) = I_n$, and a homotopy $H : \I\times \T \to GL_n(D)$ such that $H(t,1) = I_n$ for all $t\in \I$, and
\[
H(0,z) = \overline{g}(z), \text{ and }  H(1,z) = (\theta^n_D\circ f)(z)
\]
for all $z\in \T$. Think of $f$ as a path $f: \I\to GL_{n-1}(D)$ such that $f(0) = f(1) = I_n$. By the path lifting property, $\exists \overline{f} : \I\to GL_{n-1}(C)$ such that $\overline{f}(1) = I_n$ and $\gamma\circ \overline{f} = f$.  \\

Define $\widetilde{f} : \I\to GL_n(C)$ by $\widetilde{f} := \theta^n_C\circ \overline{f}$, so that $\gamma\circ \widetilde{f} = \theta^n_D\circ f$. We may think of $H$ as a map $H:\I\times \I \to GL_n(D)$ such that $H(t,0) = H(t,1) = I_n$. Hence, $H$ defines a path
\[
H : \I \to GL(X_n(D)^+)
\]
such that $H(0) = g$ and $H(1) = \gamma\circ \widetilde{f}$. By the previous lemma and the path lifting property of Proposition \ref{prop:path_lifting}, $\exists \overline{H} : \I \to GL(X_n(C)^+)$ such that $\gamma\circ \overline{H} = H$ and $\overline{H}(1) = \widetilde{f}$. Now $h := \overline{H}(0) \in GL(X_n(C)^+)$ is a path $h:\I\to GL_n(C)$ such that
\[
\gamma\circ h = \gamma\circ \overline{H}(0) = H(0) = g
\]
Since $h(1) \in \C I_n$ and $\gamma$ is linear, it follows that $h(1) = g(1) = I_n$. Hence, $h(0) \in (\iota(M_{n-1}(C))+I_n)\cap GL_n(C)$, which implies that $h(0) \in \theta^n_C(GL_{n-1}(C))$.
\end{proof}

We are now in a position to prove the estimate on the connected stable rank of $A$ defined as a pullback as in Equation \ref{eqn:pullback}. Together with Proposition \ref{prop:inj_surj_bounds} and Theorem \ref{thm:gsr_pullback}, this completes the proof of Theorem \ref{thm:main_pullback}.

\begin{thm}\label{thm:csr_pullback}
Given a pullback diagram as above with either $\gamma$ or $\delta$ surjective,
\[
csr(A) \leq \max\{csr(B),csr(C),\inj_0(D),\surj_1(D)\}
\]
\end{thm}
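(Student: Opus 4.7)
The plan is to refine the proof of Theorem \ref{thm:gsr_pullback_alt}, producing an invertible matrix in $GL_n^0(A)$ (not merely in $GL_n(A)$) that carries an arbitrary $w \in Lg_n(A)$ to $e_n$, with the new hypothesis $n \geq \surj_1(D)$ entering precisely through Lemma \ref{lem:loop_lifting}. Fix $n \geq N := \max\{csr(B), csr(C), \inj_0(D), \surj_1(D)\}$ and $w \in Lg_n(A)$; assume without loss of generality that $\gamma$ is surjective.

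I would first run the construction from Theorem \ref{thm:gsr_pullback_alt} verbatim: using $n \geq csr(B), csr(C), \inj_0(D)$ one obtains $S_1 \in GL_n^0(B)$ and $S_2' \in GL_n^0(C)$ with $\delta(S_1) = \gamma(S_2')$, $S_1 \alpha(w) = e_n$, $S_2' \beta(w) = e_n$, and (by Lemma \ref{lem:gln_pullback}) a unique $T \in GL_n(A)$ satisfying $\alpha(T) = S_1$, $\beta(T) = S_2'$, and $T w = e_n$. The only remaining task is to upgrade $T$ to an element of $GL_n^0(A)$.

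Choose arbitrary paths $s_1: \I \to GL_n^0(B)$ from $I_n$ to $S_1$ and $s_2: \I \to GL_n^0(C)$ from $I_n$ to $S_2'$. These need not be $\delta$-$\gamma$-compatible; the obstruction is the loop $m(t) := \gamma(s_2(t))^{-1} \delta(s_1(t))$ in $GL_n(D)$ based at $I_n$. This is exactly the setting of Lemma \ref{lem:loop_lifting}: since $n \geq \surj_1(D)$, there is $r: \I \to GL_n(C)$ with $r(1) = I_n$, $\gamma \circ r = m$, and $r(0) = \theta_C^n(u)$ for some $u \in GL_{n-1}(C)$ (and $\gamma(u) = I_{n-1}$ since $m(0) = I_n$). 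Setting $\widetilde{s_2} := s_2 \cdot r$ produces a path from $\theta_C^n(u)$ to $S_2'$ with $\gamma \circ \widetilde{s_2} = \delta \circ s_1$, and Lemma \ref{lem:gln_pullback} applied pointwise then yields a continuous path $\phi: \I \to GL_n(A)$ with $\phi(1) = T$ and $\phi(0) = \theta_A^n(w_0)$, where $w_0 \in GL_{n-1}(A)$ is the unique element (afforded by Lemma \ref{lem:gln_pullback} at rank $n-1$) with $\alpha(w_0) = I_{n-1}$ and $\beta(w_0) = u$; this last element exists because $\delta(I_{n-1}) = I_{n-1} = \gamma(u)$.

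To conclude, I would absorb the residual discrepancy $\theta_A^n(w_0)$: since $\theta_A^n(w_0)$ fixes $e_n$, the translated path $t \mapsto \theta_A^n(w_0)^{-1} \phi(t)$ runs from $I_n$ to $T' := \theta_A^n(w_0)^{-1} T$ inside $GL_n(A)$, so $T' \in GL_n^0(A)$, while $T' w = \theta_A^n(w_0)^{-1} T w = \theta_A^n(w_0)^{-1} e_n = e_n$. This gives the element of $GL_n^0(A)$ moving $w$ to $e_n$, hence $csr(A) \leq n$. The main obstacle I anticipate is the non-compatibility of the naive choices $s_1, s_2$; the role of $\surj_1(D)$ is precisely to ensure, via Lemma \ref{lem:loop_lifting}, that the obstruction loop lifts to $GL_n(C)$ up to a residual term in $\theta_C^n(GL_{n-1}(C))$, which in turn sits inside the stabilizer of $e_n$ in $GL_n(A)$ and therefore disappears at the end.
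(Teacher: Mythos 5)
Your proof is correct and follows essentially the same route as the paper: both arguments repair the incompatibility of independently chosen paths for $S_1$ and $S_2'$ by lifting the obstruction loop in $GL_n(D)$ via Lemma \ref{lem:loop_lifting} (this is exactly where $\surj_1(D)$ enters), lift the resulting compatible pair of paths through the pullback, and discard the residual term using that $\theta^n(GL_{n-1})$ stabilizes $e_n$. The only (cosmetic) difference is that the paper multiplies $S_2$ by $h(0)$ at the outset so the lifted path ends at $I_n$, whereas you translate by $\theta_A^n(w_0)^{-1}$ at the end; your appeal to ``pointwise'' uniqueness in Lemma \ref{lem:gln_pullback} for continuity of $\phi$ is harmless, since the lift is concretely the pair $(s_1(t),\widetilde{s_2}(t))$ in $M_n(B)\oplus M_n(C)$ (or one can use the $C[0,1]\otimes M_n$ pullback as the paper does).
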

\begin{proof}
Let $n\geq \max\{csr(B),csr(C),\inj_0(D),\surj_1(D)\}$, then we wish to prove that $GL_n^0(A)$ acts transitively on $Lg_n(A)$. To this end, fix $v\in Lg_n(A)$, so that $\alpha(v) \in Lg_n(B)$ and $\beta(v) \in Lg_n(C)$. By the proof of Theorem \ref{thm:gsr_pullback_alt}, $\exists S_1 \in GL_n^0(B)$, and $S_2\in GL_n^0(C)$ such that $S_1\alpha(v) = e_n, S_2\beta(v) = e_n$, and $\delta(S_1) = \gamma(S_2)$. Hence, we obtained $T \in GL_n(A)$ such that $\alpha(T) = S_1, \beta(T) = S_2$, and $T(v) =e_n$. \\

Now fix a path $\overline{S_1} : \I\to GL_n(B)$ such that $\overline{S_1}(0) = S_1$ and $\overline{S_1}(1) = I_n$, and a path $\overline{S_2} : \I\to GL_n(C)$ such that $\overline{S_2}(0) = S_2$, and $\overline{S_2}(1) = I_n$. Now consider $g:\I\to GL_n(D)$ given by
\[
g(t) = \delta(\overline{S_1}(t))\gamma(\overline{S_2}(t))^{-1}
\]
Then $g(0) = g(1) = I_n$. So by the previous lemma, $\exists h: \I\to GL_n(C)$ such that $h(0) \in \theta^n_C(GL_{n-1}(C)), h(1) = I_n$ and $\gamma\circ h = g$. Now define
\[
S_2' := h(0)S_2
\]
Then $S_2'\beta(v) = e_n$ because $h(0)e_n = e_n$, and $\gamma(S_2') = g(0)\gamma(S_2) = \delta(S_1)$. Hence, by Lemma \ref{lem:gln_pullback}, $\exists T' \in GL_n(A)$ such that $\alpha(T') = S_1, \beta(T') = S_2'$, and $T'(v) = e_n$. We wish to show that $T' \in GL_n^0(A)$. \\

Define $\overline{S_2'} :\I \to GL_n(C)$ by $\overline{S_2'}(t) := h(t)\overline{S_2}(t)$. Since $\gamma\circ h = g$, we have
\[
\gamma\circ \overline{S_2'} = \delta\circ \overline{S_1}
\]
Since $C[0,1]\otimes M_n(\C)$ is nuclear, by \cite[Theorem 3.9]{pedersen} we have a pullback
\[
\xymatrix{
C[0,1]\otimes M_n(A) \ar[r]^{\alpha}\ar[d]^{\beta} & C[0,1]\otimes M_n(B)\ar[d]^{\delta} \\
C[0,1]\otimes M_n(C) \ar[r]^{\gamma} & C[0,1]\otimes M_n(D)
}
\]
so we obtain a path $f: \I\to M_n(A)$ such that $\alpha\circ f = \overline{S_1}$, and $\beta\circ f = \overline{S_2'}$. For each $t\in \I, \beta(f(t)) \in GL_n(C)$ and $\alpha(f(t))\in GL_n(B)$, so $f(t) \in GL_n(A)$ by Lemma \ref{lem:gln_pullback}. Hence, $f$ defines a path in $GL_n(A)$. Furthermore, $\alpha(f(0)) = S_1$ and $\beta(f(0)) = S_2'$, so by the uniqueness in Lemma \ref{lem:gln_pullback}, $f(0) = T'$. Similarly, $f(1) = I_n$, so $T' \in GL_n^0(A)$. Hence, $GL_n^0(A)$ acts transitively on $Lg_n(A)$, whence $csr(A)\leq n$.
\end{proof}


\section{Tensor products by commutative C*-algebras}

We now wish to calculate the homotopical stable ranks for algebras of the form $C(X)\otimes A$. Once again, we consider the general and connected stable ranks separately. 

\subsection{General Stable Rank}

To compute $gsr(C(X)\otimes A)$, we wish to describe all projective modules over $C(X)\otimes A$. If $A = \C$, by the Serre-Swan theorem, this amounts to describing all vector bundles over $X$. This is prohibitively difficult, of course, so we consider the potentially simpler situation, when $X$ is itself a suspension. \\

Let $X$ be a compact Hausdorff space, and $x_0\in X$ be a fixed base point. The reduced suspension of $X$ is $\Sigma X = (X\times \I)/\sim$ where
\[
(0,x)\sim (0,x_0), (1,x)\sim (1,x_0), \text{ and } (s,x_0)\sim (0,x_0)\quad\forall x\in X,s\in \I
\]
Now we observe that vector bundles of rank $n$ over $\Sigma X$ correspond to homotopy classes of maps from $X$ into $GL_n(\C)$ based at the identity. Hence, $gsr(C(\Sigma X))$ is the least $n\geq 1$ such that the map $[X,GL_{m-1}(\C)]_{\ast}\to [X,GL_m(\C)]_{\ast}$ induced by $\theta_{\C}$ is injective for all $m\geq n$. In our notation, this simply gives
\begin{equation}\label{eqn:gsr_sx}
gsr(C(\Sigma X)) = \inj_X(\C)
\end{equation}
This is precisely the observation used by Nica to give the first non-trivial calculation of the general stable rank.
\begin{ex}\cite[Proposition 5.5]{nica2}\label{ex:gsr_suspension}
\[
gsr(C(\S^d)) = \begin{cases}
\lceil \frac{d}{2}\rceil + 1 &: \text{ if } d>4, \text{ and } d\notin 4\Z \\
\lceil \frac{d}{2}\rceil &:\text{ if } d>4, \text{ and } d\in 4\Z \\
1 &: d\leq 4
\end{cases}
\]
\end{ex}
The goal of this section is to expand on this idea, by describing projective modules over $C(\Sigma X)\otimes A$, which allows us to prove an analogue of Equation \ref{eqn:gsr_sx}. To begin with, we fix a unital C*-algebra $A$, and we identify functions $f:\Sigma X\to A$ with functions $f:\I\times X\to A$ such that
\begin{equation}\label{eqn:functions_suspension}
f(0,x) = f(1,x) = f(s,x_0) \quad\forall x\in X,s\in \I
\end{equation}
We now follow the work of Rieffel \cite{rieffel3} to construct projective modules over $C(\Sigma X)\otimes A$. Given a projective right $A$-module $V$, $\text{Aut}_A(V)$ is equipped with the point-norm topology, and has the base point $\text{id}_V$. Let $C_{x_0}(X,\text{Aut}_A(V))$ be the space of continuous functions from $u: X\to \text{Aut}_A(V)$ such that $u(x_0) = \text{id}_V$. Given a projective right $A$-module $V$ and $u\in C_{x_0}(X,\text{Aut}_A(V))$, we define
\begin{gather*}
M(u) = \{\varphi : \I\times X\to V : \varphi(0,x) = \varphi(s,x_0) \text{ and } \\
\varphi(1,x) = u(x)\varphi(0,x)\quad\forall x\in X, s\in \I\}
\end{gather*}
Note that $M(u)$ is a right $C(\Sigma X)\otimes A$-module with the action given by
\[
(\varphi\cdot f)(t,x) := \varphi(t,x)f(t,x)
\]
\begin{lemma}\label{lem:path_connected}
If $u_0, u_1$ are path connected in $C_{x_0}(X,\text{Aut}_A(V))$, then $M(u_0)\cong M(u_1)$
\end{lemma}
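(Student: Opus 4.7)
The plan is to exhibit the isomorphism $M(u_0)\cong M(u_1)$ via an explicit ``clutching'' operator built from the path. Fix a path $H:\I\to C_{x_0}(X,\text{Aut}_A(V))$ with $H(0)=u_0$ and $H(1)=u_1$, and regard it (by the exponential law) as a continuous map $H:\I\times X\to \text{Aut}_A(V)$ satisfying $H(t,x_0)=\text{id}_V$ for all $t\in\I$, together with $H(0,x)=u_0(x)$ and $H(1,x)=u_1(x)$. I then set $T(t,x):=H(t,x)u_0(x)^{-1}$ and define $\Phi:M(u_0)\to M(u_1)$ by
\[
\Phi(\varphi)(t,x) := T(t,x)\varphi(t,x) = H(t,x)u_0(x)^{-1}\varphi(t,x).
\]

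The essential observation is that $T$ satisfies exactly the boundary conditions needed to move between $M(u_0)$ and $M(u_1)$, namely $T(0,x)=\text{id}_V$, $T(s,x_0)=\text{id}_V$, and $T(1,x)=u_1(x)u_0(x)^{-1}$. Writing $v_0 := \varphi(0,x)$ (a constant independent of $x$ because $\varphi\in M(u_0)$), a direct computation gives $\Phi(\varphi)(0,x) = v_0$, $\Phi(\varphi)(s,x_0) = v_0$, and $\Phi(\varphi)(1,x) = u_1(x)u_0(x)^{-1}u_0(x)v_0 = u_1(x)\Phi(\varphi)(0,x)$, so $\Phi(\varphi)\in M(u_1)$. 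Since $T(t,x)\in\text{Aut}_A(V)$ acts by right $A$-module endomorphisms, $\Phi$ automatically commutes with right multiplication by elements of $C(\Sigma X)\otimes A$, hence is a module homomorphism.

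To produce the inverse, apply the same recipe to the reversed path, yielding $\Phi':M(u_1)\to M(u_0)$ with $\Phi'(\psi)(t,x) = u_0(x)H(t,x)^{-1}\psi(t,x)$. The pointwise identity $T(t,x)^{-1}T(t,x)=\text{id}_V$ immediately gives $\Phi'\circ\Phi = \text{id}_{M(u_0)}$ and $\Phi\circ\Phi' = \text{id}_{M(u_1)}$, so $\Phi$ is an isomorphism. The only delicate point I anticipate is the continuity of $T$, which depends on continuity of inversion in $\text{Aut}_A(V)$ in the point-norm topology; since $V$ is finitely generated projective, $\text{Aut}_A(V)$ embeds as an open subset of the invertibles of a corner $pM_n(A)p$, so inversion is continuous and this is not a real obstacle. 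The content of the proof is entirely the guess for $T$; everything after is a direct verification.
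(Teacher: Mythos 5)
Your proof is correct and is essentially the paper's own argument: your $\Phi$ and $\Phi'$ are exactly the maps $F(\varphi)(s,x)=H(s,x)u_0(x)^{-1}\varphi(s,x)$ and $G(\psi)(s,x)=u_0(x)H(s,x)^{-1}\psi(s,x)$ used there, with the same boundary-condition checks and the same observation that everything commutes with the right action. (One cosmetic remark: the formula you give for $\Phi'$ is not literally what ``the same recipe applied to the reversed path'' produces, which would be $H(1-t,x)u_1(x)^{-1}\psi(t,x)$, but since you state the formula explicitly and verify $\Phi'\circ\Phi=\mathrm{id}$ and $\Phi\circ\Phi'=\mathrm{id}$ pointwise, this is immaterial.)
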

\begin{proof}
Let $H:\I\to C_{x_0}(X,\text{Aut}_A(V))$ be a path such that $H(0) = u_0, H(1) = u_1$, then we may think of $H$ as a map $H:\I\times X\to \text{Aut}_A(V)$. Define $F: M(u_0)\to M(u_1)$ by
\[
F(\varphi)(s,x) := H(s,x)u_0(x)^{-1}\varphi(s,x)
\]
so $F(\varphi)(1,x) = H(1,x)u_0(x)^{-1}\varphi(1,x) = u_1(x)\varphi(0,x)$ and $F(\varphi)(0,x) = \varphi(0,x)$. Hence, $F$ is well-defined. Also, $F$ is clearly a module homomorphism because the action of $C(\Sigma X)\otimes A$ is on the right. To show that $F$ is an isomorphism, we define $G:M(u_1)\to M(u_0)$ by
\[
G(\psi)(s,x) := u_0(x)H(s,x)^{-1}\psi(s,x)
\]
Then $G$ is a well-defined module homomorphism such that $G\circ F = \text{id}_{M(u_0)}$ and $F\circ G = \text{id}_{M(u_1)}$
\end{proof}

Given two projective right $A$-modules $V_1$ and $V_2$, $u_1\in C_{x_0}(X,\text{Aut}_A(V_1))$, and $u_2\in C_{x_0}(X,\text{Aut}_A(V_2))$, we write $u_1\oplus u_2 \in C_{x_0}(X,\text{Aut}_A(V_1\oplus V_2))$ for the map
\[
(u_1\oplus u_2)(x)(v_1,v_2) := (u_1(x)(v_1), u_2(x)(v_2))
\]
The proof of the next two lemmas is entirely obvious from the definition.
\begin{lemma}\label{lem:direct_sum}
If $V_1$ and $V_2$ are projective, right $A$-modules and $u_1\in C_{x_0}(X,\text{Aut}_A(V_1))$, $u_2\in C_{x_0}(X,\text{Aut}_A(V_2))$, then
\[
M(u_1\oplus u_2)\cong M(u_1)\oplus M(u_2)
\]
\end{lemma}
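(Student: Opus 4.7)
The plan is to produce the isomorphism by literally decomposing a $V_1\oplus V_2$-valued function into its two coordinate functions, and to check that this coordinate-wise decomposition respects the defining relations of $M(\,\cdot\,)$ as well as the $C(\Sigma X)\otimes A$-action.

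More precisely, I would define
\[
\Phi : M(u_1\oplus u_2) \to M(u_1)\oplus M(u_2),\qquad \Phi(\varphi) = (\varphi_1,\varphi_2),
\]
where $\varphi_i := p_i\circ \varphi$ and $p_i : V_1\oplus V_2 \to V_i$ is the canonical projection. The first task is to verify that each $\varphi_i$ lands in $M(u_i)$. Since $\varphi$ satisfies
\[
\varphi(0,x)=\varphi(s,x_0)\quad\text{and}\quad \varphi(1,x)=(u_1\oplus u_2)(x)\varphi(0,x),
\]
applying $p_i$ and using $p_i\circ(u_1\oplus u_2)(x) = u_i(x)\circ p_i$ immediately yields the corresponding two relations for $\varphi_i$. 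In the reverse direction, given $(\varphi_1,\varphi_2)\in M(u_1)\oplus M(u_2)$, the function $\varphi(s,x):=(\varphi_1(s,x),\varphi_2(s,x))$ clearly satisfies the defining conditions for $M(u_1\oplus u_2)$, giving an inverse $\Psi$ to $\Phi$.

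It remains to observe that $\Phi$ and $\Psi$ are $C(\Sigma X)\otimes A$-linear. This is immediate because the module action on all three of $M(u_1\oplus u_2)$, $M(u_1)$, and $M(u_2)$ is defined pointwise by right multiplication of the ambient $A$-modules, and right multiplication commutes with the projections $p_i$ (which are $A$-linear). Thus $\Phi$ is a bijective $C(\Sigma X)\otimes A$-module homomorphism, i.e.\ an isomorphism.

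There is really no obstacle here, which matches the author's remark that the proof is obvious; the only thing to be careful about is that the cocycle data $u_1\oplus u_2$ was defined precisely so that the projections $p_i$ are equivariant, and that both the twisting condition at $s=1$ and the collapsing condition at $x_0$ split coordinatewise. Once these two splittings are recorded, the statement is immediate.
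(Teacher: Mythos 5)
Your proof is correct and is exactly the argument the paper has in mind: the paper simply declares this lemma "entirely obvious from the definition," and your coordinatewise decomposition via the $A$-linear projections $p_i$, together with the observation that both defining relations and the pointwise right action split along $V_1\oplus V_2$, is that obvious argument spelled out. Nothing further is needed.
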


\begin{lemma}\label{lem:trivial_module}
If $\iota_{A^n}\in C_{x_0}(X,GL_n(A))$ denotes the identity automorphism on $A^n$, then
\[
M(\iota_{A^n}) \cong (C(\Sigma X)\otimes A)^n
\]
\end{lemma}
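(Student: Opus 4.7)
The plan is to observe that when $u$ is specialised to the identity automorphism, the defining conditions of $M(u)$ collapse to exactly the relations (\ref{eqn:functions_suspension}) identifying functions on $\I\times X$ with functions on the reduced suspension $\Sigma X$, and then to transport the coordinate isomorphism $C(\Sigma X, A^n)\cong (C(\Sigma X)\otimes A)^n$ through the resulting bijection.

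First I would substitute $u(x)=\mathrm{id}_{A^n}$ into the definition of $M(u)$. The two clauses become $\varphi(0,x)=\varphi(s,x_0)$ and $\varphi(1,x)=\varphi(0,x)$ for all $x\in X$ and $s\in\I$. Setting $s=0$ in the first clause shows that $\varphi(0,\cdot)$ is the constant function with value $v_0:=\varphi(0,x_0)$; setting $x=x_0$ shows that $\varphi(\cdot,x_0)$ is constant equal to $v_0$; and the second clause then forces $\varphi(1,\cdot)$ to be constant equal to $v_0$ as well. In other words, $\varphi$ satisfies precisely the three coincidences characterising a function on $\I\times X$ that descends to $\Sigma X$, i.e.\ equation (\ref{eqn:functions_suspension}). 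By the universal property of the quotient map $q:\I\times X\to\Sigma X$, each $\varphi\in M(\iota_{A^n})$ factors uniquely through a continuous map $\widetilde{\varphi}:\Sigma X\to A^n$, and conversely any continuous $\psi:\Sigma X\to A^n$ pulls back along $q$ to an element of $M(\iota_{A^n})$.

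Next I would check that the assignment $\varphi\mapsto\widetilde{\varphi}$ is a right $(C(\Sigma X)\otimes A)$-module isomorphism $M(\iota_{A^n})\to C(\Sigma X,A^n)$. The action on $M(\iota_{A^n})$ is pointwise multiplication on $\I\times X$ (viewing elements of $C(\Sigma X)\otimes A$ as functions on $\I\times X$ subject to (\ref{eqn:functions_suspension})); after passing through $q$ this is just pointwise multiplication on $\Sigma X$, which matches the standard action on $C(\Sigma X,A^n)$. Finally I would invoke the standard coordinatewise identifications
\[
C(\Sigma X, A^n) \;\cong\; C(\Sigma X, A)^n \;\cong\; (C(\Sigma X)\otimes A)^n,
\]
the second of which is the identification already fixed in Section~2.2 between $A$-valued continuous functions on a compact Hausdorff space and the corresponding tensor product.

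There is no substantive obstacle: the lemma is essentially a tautology once one lines up the definition of $M(u)$ for $u=\iota_{A^n}$ with that of the reduced suspension. The only routine point to verify is that continuity of $\varphi$ on $\I\times X$ transfers to continuity of $\widetilde{\varphi}$ on $\Sigma X$, and this follows immediately from the quotient characterisation of the suspension topology together with compactness of $\I\times X$.
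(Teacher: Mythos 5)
Your proof is correct and is exactly the direct verification the paper leaves out, stating only that the lemma is ``entirely obvious from the definition'': with $u=\iota_{A^n}$ the defining conditions of $M(u)$ reduce to the relations of Equation~(\ref{eqn:functions_suspension}), so $M(\iota_{A^n})$ is identified with $C(\Sigma X, A^n)\cong (C(\Sigma X)\otimes A)^n$ as right modules. No gaps; the continuity and module-action checks you mention are the only routine points.
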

\begin{lemma}\label{lem:iso_connected}
Let $u,v\in C_{x_0}(X,\text{Aut}_A(V))$ such that $M(u)\cong M(v)$, then $\exists w\in C(X,\text{Aut}_A(V))$ such that $v$ is path connected to $wuw^{-1}$ in $C_{x_0}(X,\text{Aut}_A(V))$
\end{lemma}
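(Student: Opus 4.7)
The plan is to extract from $F: M(u) \to M(v)$ a continuous ``fiberwise'' automorphism $\widetilde{F}: \I \times X \to \text{Aut}_A(V)$ and then read off the desired homotopy from its boundary values. For each $(s,x) \in \I \times X$, both $\varphi(s,x)$ and $F(\varphi)(s,x)$ take values in $V$, and I would define $\widetilde{F}(s,x) \in \text{Aut}_A(V)$ by the rule
\[
\widetilde{F}(s,x) \cdot \varphi(s,x) = F(\varphi)(s,x) \quad\text{for every } \varphi \in M(u).
\]
Surjectivity of the evaluation $\text{ev}_{(s,x)}: M(u) \to V$ follows from a bump-function construction, adjusted at the collapsed subsets $\{0,1\} \times X$ and $\I \times \{x_0\}$. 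Well-definedness relies on the fact that $M(u)$ is finitely generated projective over $C(\Sigma X) \otimes A$, so the fiber at $(s,x)$ is $M(u)/\mathfrak{m}_{(s,x)} M(u) \cong V$, where $\mathfrak{m}_{(s,x)}$ is the ideal of $C(\Sigma X) \otimes A$ of functions vanishing at $(s,x)$. Hence $\ker(\text{ev}_{(s,x)}) = \mathfrak{m}_{(s,x)} M(u)$, and since $F$ is a module map it sends $\mathfrak{m}_{(s,x)} M(u)$ into $\mathfrak{m}_{(s,x)} M(v)$. Continuity of $\widetilde{F}$ follows from local triviality, and invertibility from applying the same construction to $F^{-1}$.

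Next, I would read off the boundary behaviour. The defining identities $\varphi(0,x) = \varphi(s,x_0)$ for $\varphi \in M(u)$, together with the analogous identities for sections of $M(v)$, force $\widetilde{F}(0,x)$ and $\widetilde{F}(s,x_0)$ to agree with the single constant $w_0 := \widetilde{F}(0,x_0) \in \text{Aut}_A(V)$ for every $s \in \I$ and $x \in X$. The remaining relation $\varphi(1,x) = u(x) \varphi(0,x_0)$, combined with $F(\varphi)(1,x) = v(x) F(\varphi)(0,x_0)$, then yields $\widetilde{F}(1,x) = v(x)\, w_0\, u(x)^{-1}$.

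Finally, I would define $\psi_t(x) := \widetilde{F}(1-t, x)\, u(x)\, w_0^{-1}$; this is a continuous path $\psi: \I \to C(X, \text{Aut}_A(V))$ with $\psi_0 = v$ and $\psi_1(x) = w_0\, u(x)\, w_0^{-1}$. Moreover $\psi_t(x_0) = w_0 \cdot u(x_0) \cdot w_0^{-1} = \text{id}_V$ for every $t$, so $\psi$ is a path in $C_{x_0}(X, \text{Aut}_A(V))$, and one may take $w \in C(X, \text{Aut}_A(V))$ to be the constant function with value $w_0$. The principal technical obstacle is the well-definedness of $\widetilde{F}(s,x)$, which rests on projectivity of $M(u)$; if one wishes to bypass this, an alternative approach is to trivialise $M(u)$ separately over each of the two cones of $\Sigma X$ (using $\sigma \mapsto u(\cdot)^{-1}\sigma$ on the upper cone), extract from $F$ continuous maps $\alpha_\pm: C_\pm \to \text{Aut}_A(V)$ satisfying $v = \alpha_-\, u\, \alpha_+^{-1}$ on the equator $X$, and exploit contractibility of each cone to homotope both $\alpha_\pm$ to their common value at the cone point while fixing the value at $x_0$.
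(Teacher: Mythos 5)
Your argument is essentially the paper's: the paper also realizes the isomorphism by a continuous family $g(s,x)\in\mathrm{Aut}_A(V)$ on the cylinder (asserted via the locally trivial bundle picture rather than your fiberwise-evaluation construction), derives $v(x)=g(1,x)u(x)g(0,x)^{-1}$ from the boundary conditions, and uses the same homotopy $t\mapsto g(t,\cdot)\,u(\cdot)\,g(0,\cdot)^{-1}$, which is your $\psi$ run backwards. Your boundary analysis (constancy of $\widetilde F$ on the bottom and seam, $\widetilde F(1,x)=v(x)w_0u(x)^{-1}$) and the resulting based homotopy are correct, so this is the same proof with the implementing map justified a bit more explicitly.
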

\begin{proof}
Note that $M(u)$ and $M(v)$ are both section algebras of locally trivial bundles over $\Sigma X$ with fibers $V$, so if $M(u)\cong M(v)$, then the isomorphism is implemented by a map $\widehat{g}:\Sigma X\to \text{Aut}_A(V)$. As in Equation \ref{eqn:functions_suspension}, we identify $\widehat{g}$ with a function $g:\I\times X\to \text{Aut}_A(V)$ such that
\[
g(0,x) =g(1,x) = g(s,x_0) = \text{id}_V \quad\forall x\in X,\quad\forall s\in \I
\]
Then note that for any $\varphi \in M(u)$,
\[
v(x)g(0,x)\varphi(0,x) = v(x)(g(\varphi))(0,x) = g(\varphi)(1,x) = g(1,x)\varphi(1,x) = g(1,x)u(x)\varphi(0,x)
\]
Hence, it follows that $v(x)g(0,x) = g(1,x)u(x)$ for all $x\in X$, so that
\[
v(x) = g(1,x)u(x)g(0,x)^{-1}\quad\forall x\in X
\]
Let $w:X\to \text{Aut}_A(V)$ be given by $w(x) := g(0,x)$, and let $H: \I\times X\to C_{x_0}(X,\text{Aut}_A(V))$ be given by $H(t,x) := g(t,x)u(x)g(0,x)^{-1}$. Then
\[
H(0,x) = w(x)u(x)w(x)^{-1}\qquad\text{ and } \qquad H(1,x) = v(x)
\]
Furthermore, $H(s,x_0) = g(s,x_0)u(x_0)g(0,x_0)^{-1}=u(x_0) = \text{id}_V$ for all $s\in \I$. Hence $H$ implements a homotopy $v\sim_h wuw^{-1}$ in $C_{x_0}(X,\text{Aut}_A(V))$.
\end{proof}

\begin{lemma}\label{lem:module_csxa}
Every projective $C(\Sigma X)\otimes A$-module is isomorphic to $M(u)$ for some projective $A$-module $V$ and some $u \in C_{x_0}(X,\text{Aut}_A(V))$.
\end{lemma}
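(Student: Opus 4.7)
The plan is to realize $P$ as the module of sections cut out by a continuous family of idempotents over $\Sigma X$, then to untwist that family by a parametrized similarity, thereby identifying $P$ with some $M(u)$.

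First, write $P = e \cdot (C(\Sigma X)\otimes A)^n$ for an idempotent $e \in M_n(C(\Sigma X) \otimes A)$, and identify $e$ with a continuous, idempotent-valued function $\widetilde e : \I \times X \to M_n(A)$ subject to the suspension conditions of Equation \ref{eqn:functions_suspension}. Taking $x = x_0$ in those conditions shows $\widetilde e(t, x_0) \equiv e_0 := \widetilde e(0, x_0)$; moreover $\widetilde e(0, x) = \widetilde e(1, x) = e_0$ for every $x$. Let $V := e_0 A^n$, a projective right $A$-module, which will serve as the fiber at the basepoint.

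The central technical step is to produce a continuous map $v : \I \times X \to GL_n(A)$ with
\[
v(0, x) = I_n, \qquad v(s, x_0) = I_n, \qquad v(t, x) e_0 v(t, x)^{-1} = \widetilde e(t, x)
\]
for all $s, t \in \I$ and $x \in X$. This is the basepoint-respecting, parametrized version of the classical fact that a continuous path of idempotents is implemented by a continuous path of similarities. Using uniform continuity of $\widetilde e$ on the compact set $\I \times X$, partition $[0, 1]$ as $0 = t_0 < t_1 < \ldots < t_k = 1$ finely enough that on each strip $[t_{i-1}, t_i] \times X$ the element
\[
w_i(t, x) := \widetilde e(t, x)\,\widetilde e(t_{i-1}, x) + \bigl(1 - \widetilde e(t, x)\bigr)\bigl(1 - \widetilde e(t_{i-1}, x)\bigr)
\]
is invertible; a direct computation gives $w_i(t, x)\, \widetilde e(t_{i-1}, x) = \widetilde e(t, x)\, w_i(t, x)$. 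Set $v(t, x) := w_i(t, x) w_{i-1}(t_{i-1}, x) \cdots w_1(t_1, x)$ for $t \in [t_{i-1}, t_i]$. The identities $w_j(t_{j-1}, x) = I_n$ give continuity across the knots and yield $v(0, x) = I_n$; the fact that $\widetilde e(t, x_0) \equiv e_0$ forces each $w_j(t, x_0) = I_n$, hence $v(s, x_0) = I_n$.

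Since $v(1, x) e_0 v(1, x)^{-1} = \widetilde e(1, x) = e_0$, the invertible matrix $v(1, x)$ commutes with $e_0$, so $v(1, x)^{-1}|_V \in \text{Aut}_A(V)$. Define $u(x) := v(1, x)^{-1}|_V$; then $u(x_0) = \text{id}_V$, and so $u \in C_{x_0}(X, \text{Aut}_A(V))$. Define $F : P \to M(u)$ by $F(\xi)(t, x) := v(t, x)^{-1} \xi(t, x)$. The relation $\widetilde e \xi = \xi$ ensures $F(\xi)(t, x) \in V$; the two defining conditions of $M(u)$ follow from the suspension boundary conditions of $\xi$ together with $v(0, x) = v(s, x_0) = I_n$ and the definition of $u$, while $C(\Sigma X) \otimes A$-linearity is immediate because $v$ acts on the left. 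The two-sided inverse is $G(\varphi)(t, x) := v(t, x) \varphi(t, x)$, verified by a parallel computation. The main obstacle is the construction of $v$ with the basepoint condition $v(s, x_0) = I_n$ built in: this is what allows $u$ to be basepoint-preserving and $F$ to land in $M(u)$ rather than in an unbased variant. The construction goes through because the explicit formula for $w_i$ automatically returns $I_n$ whenever the two idempotents involved coincide, which in our situation holds precisely along the slice $\I \times \{x_0\}$ and at each knot $t = t_{j-1}$.
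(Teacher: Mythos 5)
Your proof is correct and takes essentially the same route as the paper: represent the module as the range of an idempotent (projection) over $\Sigma X$, implement the resulting $t$-parametrized family of idempotents by a continuous family of invertibles that is the identity at $t=0$ and along the slice $\I\times\{x_0\}$, and read off the clutching automorphism $u$ at $t=1$. The only difference is presentational: you construct the implementing family explicitly via the $ef+(1-e)(1-f)$ similarity trick, whereas the paper invokes the standard path-of-unitaries fact (your $v$ is the paper's $U^{-1}$), so no genuinely new idea or gap is involved.
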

\begin{proof}
A projective $C(\Sigma X)\otimes A$-module $M$ is isomorphic to $P((C(\Sigma X)\otimes A)^n)$ for some projection $P\in M_n(C(\Sigma X)\otimes A)$. We identify $P$ with a map $P:\I\times X\to M_n(A)$ satisfying Equation \ref{eqn:functions_suspension}. Let $p := P(0,x_0)$ and $V := p(A^n)$. If we think of $P$ as a path $P:\I\to C(X)\otimes M_n(A)$, then there is a path of unitaries $U: \I\to GL(C(X)\otimes M_n(A))$ such that
\[
P(t,x) = U(t,x)^{-1}P(0,x)U(t,x) = U(t,x)^{-1}pU(t,x)
\]
Furthermore, we have $U(0,x) = \text{id}_{A^n} = U(s,x_0) \quad\forall x\in X$, and $s\in \I$. Hence,
\[
U(1,x)^{-1}pU(1,x) = U(1,x)^{-1}P(0,x)U(1,x) = P(1,x) = P(0,x) = p
\]
so $U(1,x)p = pU(1,x)$, so we define $u(x) := U(1,x)p \in \text{Aut}_A(V)$ and $u\in C_{x_0}(X,\text{Aut}_A(V))$. Finally, if $f\in P(C(\Sigma X)\otimes A)^n)$, then we think of $f$ as a function $f:\I\times X\to A^n$ satisfying Equation \ref{eqn:functions_suspension} and $P(t,x)f(t,x) = f(t,x)$. Hence, we may define $\varphi : \I\times X\to V$ by
\[
\varphi(t,x) := U(t,x)f(t,x)
\]
and this is well-defined because
\[
p\varphi(t,x) = P(0,x)U(t,x)f(t,x) = U(t,x)P(t,x)f(t,x) = U(t,x)f(t,x) = \varphi(t,x)
\]
Furthermore, $\varphi(1,x) = U(1,x)f(1,x) = U(1,x)f(0,x)$ and $\varphi(0,x) = f(0,x)$. Hence, $\varphi \in M(u)$. It is then easy to check that the map that sends $f$ to $\varphi$ is an isomorphism from $P(C(\Sigma X)\otimes A)^n)$ to $M(u)$.
\end{proof}

We are now ready to prove the main theorem of this section. Recall that $\inj_X(A)$ is the least $n\geq 1$ such that the map $(\theta_A)_{\ast} : [X,GL_{m-1}(A)]_{\ast}\to [X,GL_m(A)]_{\ast}$ is injective for all $m\geq n$.
\begin{thm}\label{thm:csxa}
\[
gsr(C(\Sigma X)\otimes A) = \max\{gsr(A),\inj_X(A)\}
\]
\end{thm}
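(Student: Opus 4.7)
The strategy is to use the Milnor-style parametrization of projective $C(\Sigma X)\otimes A$-modules by elements $u \in C_{x_0}(X, \text{Aut}_A(V))$ developed in Lemmas \ref{lem:path_connected}--\ref{lem:module_csxa}, and translate between stable freeness of $M(u)$ on the module side and the based homotopy type of $u$ on the topological side. The two inequalities are handled by essentially mirror-image arguments.

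For the lower bound, let $n = gsr(C(\Sigma X)\otimes A)$. The bound $n \geq gsr(A)$ is immediate from Remark \ref{rem:split_epi}, since evaluation at any base point gives a split epimorphism $C(\Sigma X)\otimes A \to A$. For $n \geq \inj_X(A)$, fix $m \geq n$. Since $GL_m(A)$ is a topological group, $[X,GL_m(A)]_*$ carries a natural group structure under which $(\theta_A)_*$ is a homomorphism, so injectivity reduces to showing: any $v \in C_{x_0}(X, GL_{m-1}(A))$ with $\theta_A\circ v$ based null-homotopic is itself based null-homotopic. Lemmas \ref{lem:path_connected}, \ref{lem:trivial_module}, and \ref{lem:direct_sum} convert this hypothesis into $M(v) \oplus (C(\Sigma X)\otimes A) \cong (C(\Sigma X)\otimes A)^m$, and $m \geq n$ forces $M(v) \cong M(\iota_{A^{m-1}})$. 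Lemma \ref{lem:iso_connected} then delivers some $w \in C(X, GL_{m-1}(A))$ with $\iota_{A^{m-1}} \sim_h wvw^{-1}$ in $C_{x_0}$; conjugating this homotopy pointwise by $w^{-1}$ gives the required based null-homotopy of $v$.

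For the upper bound, fix $m \geq \max\{gsr(A), \inj_X(A)\}$ and a stably free module with $M \oplus (C(\Sigma X)\otimes A) \cong (C(\Sigma X)\otimes A)^m$. Lemma \ref{lem:module_csxa} gives $M \cong M(u)$ for some $u \in C_{x_0}(X, \text{Aut}_A(V))$. Evaluating at $(0,x_0)$ sends $M(u)$ onto its fiber $V$, so we obtain $V \oplus A \cong A^m$; since $m \geq gsr(A)$ we get $V \cong A^{m-1}$, and we may assume $u \in C_{x_0}(X, GL_{m-1}(A))$. Lemmas \ref{lem:direct_sum}--\ref{lem:trivial_module} then yield $M(\theta_A\circ u) \cong M(\iota_{A^m})$, and the same combination of Lemma \ref{lem:iso_connected} and the conjugation trick from the previous paragraph produces a based null-homotopy $\theta_A\circ u \sim_h \iota_{A^m}$. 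Now $m \geq \inj_X(A)$ yields $u \sim_h \iota_{A^{m-1}}$ based, and Lemma \ref{lem:path_connected} concludes $M \cong M(u) \cong (C(\Sigma X)\otimes A)^{m-1}$.

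The one delicate step in both halves is the passage from Lemma \ref{lem:iso_connected} (which only produces a homotopy up to conjugation by a possibly unbased $w$) to an honest based null-homotopy. I expect this to be the only place requiring genuine care; it works cleanly because the identity is fixed under conjugation, so if $H$ is a based homotopy with $H(t)(x_0)=\text{id}$, then the pointwise conjugate $w^{-1}Hw$ still satisfies $(w^{-1}Hw)(t)(x_0) = \text{id}$.
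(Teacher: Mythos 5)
Your proposal is correct and follows essentially the same route as the paper: parametrize projective modules over $C(\Sigma X)\otimes A$ via Lemmas \ref{lem:path_connected}--\ref{lem:module_csxa}, use evaluation at the base point together with $gsr(A)$ to reduce to $u\in C_{x_0}(X,GL_{m-1}(A))$, and translate stable freeness into the injectivity statement defining $\inj_X(A)$. You are in fact slightly more explicit than the paper on two points it leaves implicit, namely the reduction of injectivity of $(\theta_A)_*$ to triviality of its kernel via the group structure on $[X,GL_m(A)]_*$, and the conjugation argument upgrading the conclusion of Lemma \ref{lem:iso_connected} to a genuinely based homotopy.
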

\begin{proof}
For simplicity of notation, write $B:= C(\Sigma X)\otimes A$. Let $n \geq\max\{gsr(A),$ $\inj_X(A)\}$, and let $P$ be a projective module over $B$ such that $P\oplus B\cong B^n$. By Lemma \ref{lem:module_csxa}, $\exists$ a projective $A$-module $V$ and a map $u\in C_{x_0}(X,\text{Aut}_A(V))$ such that $P\cong M(u)$. The map $\pi : B\to A$ given by evaluation at $[(0,x_0)]\in \Sigma X$ is a ring homomorphism, so
\[
\pi_{\#}(P)\oplus A\cong A^n
\]
But $\pi_{\#}(P) \cong V$ and $gsr(A)\leq n$ so $V\cong A^{n-1}$. Hence, we may think of $u\in C_{x_0}(X,GL_{n-1}(A))$. Now note that
\[
M(u\oplus \iota_A)\cong M(\iota_{A^n})
\]
so by Lemma \ref{lem:iso_connected} and Lemma \ref{lem:trivial_module}, $u\oplus \iota_A \sim_h \iota_{A^n}$ in $C_{x_0}(X,GL_n(A))$. Since $n\geq \inj_X(A)$, it follows that $u\sim_h \iota_{A^{n-1}}$ in $C_{x_0}(X,GL_{n-1}(A))$, whence $P\cong B^{n-1}$ by Lemma \ref{lem:path_connected}. Hence, $gsr(B)\leq n$ as required. \\

For the reverse inequality, let $n \geq gsr(B)$, then by Remark \ref{rem:split_epi}, $n\geq gsr(A)$. Now suppose $u\in C_{x_0}(X,GL_{n-1}(A))$ is such that $u\oplus \iota_A \sim_h \iota_{A^n}$ in $C_{x_0}(X,GL_n(A))$, then let $P = M(u)$. By Lemmas \ref{lem:direct_sum} and \ref{lem:trivial_module},
\[
P\oplus B\cong M(u\oplus \iota_A) \cong M(\iota_{A^n}) \cong B^n
\]
By hypothesis, $P\cong B^{n-1} = M(\iota_{A^{n-1}})$. By Lemma \ref{lem:iso_connected}, $u\sim_h \iota_{A^{n-1}}$ in $C_{x_0}(X,GL_{n-1}(A))$, and so $\inj_X(A)\leq n$, completing the proof.
\end{proof}

\subsection{Connected Stable Rank}\label{subsection:csr_commutative}

Let $A$ be a C*-algebra, and $X$ a compact Hausdorff space, then we wish to determine estimates for $csr(C(X)\otimes A)$ in terms of $\dim(X)$ and other parameters that depend on $A$. To this end, we notice that if $X$ is a CW-complex of dimension atmost $n$, we may write $X = X_0\cup _{\varphi}\D^n$, where $X_0$ is a CW-complex of dimension atmost $n$, and $\varphi : \S^n\to X_0$ is the attaching map. By \cite[Lemma 1.4]{nagisa}, we have a pullback diagram
\[
\xymatrix{
C(X)\otimes A\ar[r]\ar[d] & C(X_0)\otimes A\ar[d]^{\varphi^{\ast}} \\
C(\D^n)\otimes A\ar[r]^{\gamma} & C(\S^{n-1})\otimes A
}
\]
where $\gamma$ is the restriction map. By Theorem \ref{thm:csr_pullback}, we get
\begin{align*}
csr(C(X)\otimes A)\leq & \max\{csr(C(X_0)\otimes A), csr(C(\D^n)\otimes A), \\ & \inj_0(C(\S^{n-1})\otimes A), \surj_1(C(\S^{n-1})\otimes A)\}
\end{align*}
In order to estimate $\text{inj}_0(C(\S^{n-1})\otimes A)$, we observe that
\begin{lemma}\label{lem:inj_cx}
If $X$ is a compact Hausdorff space and $A$ a unital C*-algebra, then
\[
\inj_0(C(X)\otimes A) = \inj_X(A)
\]
\end{lemma}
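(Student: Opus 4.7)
The plan is to match the injectivity condition on the left with that on the right via the natural identification $GL_m(C(X)\otimes A) = C(X, GL_m(A))$. First, I would note that this is an identification of pointed topological groups: the identity $I_m \in GL_m(C(X)\otimes A)$ corresponds to the constant map at $I_m$, and the stabilization map $\theta_A^m$ on the left corresponds to postcomposition with $\theta_A^m$ on the right. Taking $\pi_0$, we obtain $\pi_0(GL_m(C(X)\otimes A)) = [X, GL_m(A)]$ as a pointed set, with basepoint the class of the constant map at $I_m$. Hence $\inj_0(C(X)\otimes A)$ is the least $n \geq 1$ such that $(\theta_A)_* \colon [X, GL_{m-1}(A)] \to [X, GL_m(A)]$ has trivial kernel for all $m \geq n$, where the target is interpreted as the pointed set of free homotopy classes.

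The second step is to reduce this free-homotopy statement to the based analogue appearing in the definition of $\inj_X(A)$. Here I would use the standard topological-group translation trick: given a based map $f \colon X \to GL_m(A)$ and a free null-homotopy $H \colon X \times \I \to GL_m(A)$, the modified homotopy $H'(x,t) := H(x_0, t)^{-1} H(x,t)$ satisfies $H'(x_0, t) = I_m$ for all $t \in \I$ and is therefore a based null-homotopy. Applied to $f$ and to $\theta_A \circ f$ simultaneously, this shows that on based representatives the kernel of the free $(\theta_A)_*$ and the kernel of the based $(\theta_A)_* \colon [X, GL_{m-1}(A)]_* \to [X, GL_m(A)]_*$ agree.

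The main obstacle I anticipate is handling unbased representatives $f \colon X \to GL_{m-1}(A)$ that lie in the free kernel: converting $f$ to a based map by left translation $f \mapsto f(x_0)^{-1} f$ preserves the free homotopy class only when $f(x_0) \in GL_{m-1}^0(A)$, whereas the hypothesis that $\theta_A \circ f$ is freely null-homotopic directly yields only $\theta_A(f(x_0)) \in GL_m^0(A)$. Closing this gap requires the $\pi_0$ injectivity of $\theta_A$ in the relevant range; I would obtain this by pushing the null-homotopy of $\theta_A \circ f$ through the fibration $TL_m(A) \to GL_m(A) \to Lc_m(A)$ of Section \ref{subsec:stable_ranks}, in parallel with the exact-sequence argument used for Proposition \ref{prop:inj_surj_bounds} and Lemma \ref{lem:forgetful}, to produce a path from $f(x_0)$ to $I_{m-1}$ in $GL_{m-1}(A)$. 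Once this based-ification step is available in the range $m \geq \inj_X(A)$, the free and based trivial-kernel conditions coincide and yield the stated equality $\inj_0(C(X)\otimes A) = \inj_X(A)$.
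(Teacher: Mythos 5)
Your first two steps coincide with the paper's own argument: identify $GL_m(C(X)\otimes A)$ with $C(X,GL_m(A))$, so that $\inj_0(C(X)\otimes A)$ becomes an injectivity statement about $(\theta_A)_*$ on free classes $[X,GL_{m-1}(A)]\to[X,GL_m(A)]$, and use the translation trick $H'(x,t)=H(x_0,t)^{-1}H(x,t)$ to pass between free and based homotopies of based maps (this is the paper's $\widetilde H$ and $\alpha(h)(x):=h(x)h(x_0)^{-1}$). Your reduction of injectivity to trivial kernel is also legitimate, since both sets of classes are $\pi_0$'s of topological groups and $(\theta_A)_*$ is a homomorphism.

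The obstacle you flag in the last paragraph is real, but your proposed resolution does not close it, and this is a genuine gap. What you need is that $\theta_A(f(x_0))\in GL_m^0(A)$ forces $f(x_0)\in GL_{m-1}^0(A)$, i.e.\ injectivity of $\pi_0(GL_{m-1}(A))\to\pi_0(GL_m(A))$; by definition this is the condition $m\geq \inj_0(A)$, and it is not implied by $m\geq \inj_X(A)$. Pushing the free null-homotopy of $\theta_A\circ f$ through the fibration $TL_m(A)\to GL_m(A)\to Lc_m(A)$ only produces a loop in $Lc_m(A)$ based at $e_m$ whose image under $\partial:\pi_1(Lc_m(A))\to\pi_0(GL_{m-1}(A))$ is the class of $f(x_0)$; to conclude that this class is trivial one needs $\pi_1(GL_m(A))\to\pi_1(Lc_m(A))$ to be surjective, which in Proposition \ref{prop:inj_surj_bounds} is extracted from $m\geq gsr(\T A)$ --- a hypothesis of a completely different nature, not deducible from injectivity of based $X$-classes. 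Indeed, a constant map at some $u\notin GL_{m-1}^0(A)$ with $\theta_A(u)\in GL_m^0(A)$ is invisible to $[X,\cdot]_*$ but witnesses failure of free injectivity, so one always has $\inj_0(C(X)\otimes A)\geq \inj_0(A)$, and what your translation argument actually yields (for connected $X$) is $\inj_0(C(X)\otimes A)=\max\{\inj_X(A),\inj_0(A)\}$. (For comparison, the paper's own proof is terse at exactly this spot: its final homotopy joins $\alpha(f)$ to $\alpha(g)$ rather than $f$ to $g$, leaving the same $\pi_0$ issue untreated; the extra term is harmless in the paper's applications, e.g.\ $A=\C$ where $GL_m(\C)$ is connected, but it cannot be recovered from $m\geq\inj_X(A)$ in the way you propose.)
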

\begin{proof}
First note that $GL_k(C(X)\otimes A) = C(X,GL_k(A))$, so $[X,GL_k(A)] = \pi_0(GL_k(C(X)\otimes A)$. Hence, $\inj_0(C(X)\otimes A)$ is the least $m\geq 1$ such that
\[
(\theta_A)_{\ast}: [X,GL_{k-1}(A)]\to [X,GL_k(A)]
\]
is injective for all $k\geq m$. Let $x_0\in X$ be a fixed base point and $C_{x_0}(X,GL_k(A))$ be the set of all continous maps $f:X\to GL_k(A)$ such that $f(x_0) = I_n$. Now suppose $n\geq \inj_0(C(X)\otimes A), k\geq n$ and $f,g\in C_{x_0}(X,GL_{k-1}(A))$ are such that $\theta_A(f)\sim_h \theta_A(g)$ in $C(X,GL_k(A))$. Since $n\geq \inj_0(C(X)\otimes A)$, the above comments imply that there is a free homotopy $H:\I\times X\to GL_{k-1}(A)$ such that $H(0,x) = f(x)$ and $H(1,x) = g(x)$ for all $x\in X$. Then, $\widetilde{H}(t,x) := H(t,x)H(t,x_0)^{-1}$ defines a base point preserving homotopy from $f$ to $g$. Hence, $n\geq \inj_X(A)$. \\

Conversely, let $n\geq \inj_X(A), k\geq n$, and let $f,g : X\to GL_{k-1}(A)$ be two maps such that $\theta_A(f)\sim_h \theta_A(g)$ in $C(X,GL_k(A))$. Then there is a homotopy $H:\I\times X\to GL_k(A)$ connecting $\theta_A(f)$ to $\theta_A(g)$. Hence, $\widetilde{H}$, as defined above, is a base-point preserving homotopy from $\theta_A(\alpha(f))$ to $\theta_A(\alpha(g))$ where $\alpha(h)(x) := h(x)h(x_0)^{-1}$. Since $n\geq \inj_X(A)$, $\alpha(f)$ is homotopic to $\alpha(g)$ in $C_{x_0}(X,GL_{k-1}(A))$. If $G:\I\times X\to GL_{k-1}(A)$ is a base-point preserving homotopy such that $G(0,x) = \alpha(f)(x)$ and $G(1,x) = \alpha(g)(x)$ for all $x\in X$, then $\widehat{G}(t,x) := G(t,x)G(t,x_0)$ is a homotopy connecting $f$ to $g$ in $C(X,GL_{k-1}(A))$. Hence, $n\geq \inj_0(C(X)\otimes A)$. This completes the proof.
\end{proof}
In order to estimate the term $\surj_1(C(\S^{n-1})\otimes A)$ that occurs in the above inequality, we turn to the work of Thomsen \cite{thomsen}, where he defines an axiomatic homology theory that will be relevant to us. Recall \cite{schochet} that a homology theory is a sequence $\{h_n\}$ of covariant functors from an admissible category $\mathcal{D}$ of C*-algebras to abelian groups which satisfies the following axioms:
\begin{itemize}
\item Homotopy Axiom: If $\varphi_0, \varphi_1 : A\to B$ are homotopic morphisms (in the sense described in the discussion preceding Theorem \ref{thm:homotopy_dom}), then $(\varphi_0)_{\ast} = (\varphi_1)_{\ast} : h_n(A)\to h_n(B)$ for all $n\in \N$.
\item Exactness axiom: Let $0\to J\to A\to B\to 0$ be a short exact sequence in $\mathcal{D}$, then there is a map $\partial : h_n(B)\to h_{n-1}(J)$ and a long exact sequence $\ldots \to h_n(J)\to h_n(A)\to h_n(B)\xrightarrow{\partial} h_{n-1}(J)\to h_{n-1}(A)\to \ldots$. The map $\partial$ is natural with respect to morphisms of short exact sequences.
\end{itemize}
These two axioms imply that any homology theory is additive: If $0\to J\to A\to B$ is a split exact sequence in $\mathcal{D}$, then there is a natural isomorphism $h_n(A)\cong h_n(J)\oplus h_n(B)$ for all $n\in \N$. \\

Now, let $A$ be a C*-algebra (not necessarily unital), $A^+$ the C*-algebra obtained by adjoining a unit to $A$, and consider $A$ as an ideal of $A^+$. For $m\in \N$, define 
\[
GL_m(A) := \{x\in GL_m(A^+) : x - I_{(A^+)^m} \in M_m(A)\}
\]
then Thomsen proves \cite[Theorem 2.5]{thomsen} that, for a fixed $m\in \N$, the functor
\[
h_n(A) := \pi_{n+1}(GL_m(A))
\]
defines a homology theory from the category of C*-algebras to the category of abelian groups.

\begin{lemma}\label{lem:surj_restriction}
Let $D = C(\S^{n-1})\otimes A$, then
\[
\max\{\inj_0(D), \surj_1(D)\} \leq \max\{\surj_1(A), \surj_n(A), \inj_{n-1}(A)\}
\]
\end{lemma}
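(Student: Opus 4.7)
The plan is to bound $\inj_0(D)$ and $\surj_1(D)$ separately, using different structural observations about $D = C(\S^{n-1}) \otimes A$. For $\inj_0(D)$, we simply invoke Lemma \ref{lem:inj_cx} with $X = \S^{n-1}$ to conclude $\inj_0(D) = \inj_{\S^{n-1}}(A) = \inj_{n-1}(A)$, which is already one of the terms on the right-hand side.

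For $\surj_1(D)$, the idea is to exploit the split surjection $\mathrm{ev}_{x_0}: D \to A$ given by evaluation at the basepoint $x_0 \in \S^{n-1}$, whose section is the inclusion of constant functions. Passing to invertibles yields a split fibration of topological groups
\begin{equation*}
F_m \hookrightarrow GL_m(D) \xrightarrow{\mathrm{ev}_{x_0}} GL_m(A),
\end{equation*}
whose fiber is $F_m = C_{x_0}(\S^{n-1}, GL_m(A))$, the space of based continuous maps from $\S^{n-1}$ to $GL_m(A)$; in particular $\pi_j(F_m) \cong \pi_{j+n-1}(GL_m(A))$. Because the fibration is split by a continuous homomorphism, its long exact sequence of homotopy groups decomposes in each degree $j \geq 1$ into a natural isomorphism of groups
\begin{equation*}
\pi_j(GL_m(D)) \cong \pi_j(GL_m(A)) \oplus \pi_{j+n-1}(GL_m(A)).
\end{equation*}
The stabilization map $\theta$ commutes with both $\mathrm{ev}_{x_0}$ and the constant section, so it respects this decomposition and acts as $\theta_* \oplus \theta_*$ on each summand. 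Specializing to $j = 1$, surjectivity of $(\theta_D)_*$ on $\pi_1(GL_\bullet(D))$ reduces to the conjunction of surjectivity of $\theta_*$ on $\pi_1(GL_\bullet(A))$ and on $\pi_n(GL_\bullet(A))$, which yields $\surj_1(D) \leq \max\{\surj_1(A), \surj_n(A)\}$.

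Combining the two bounds gives the lemma. The main subtlety worth highlighting is that a naive parallel argument for $\inj_0(D)$ via a splitting in degree $j=0$ would force the additional term $\inj_0(A)$ on the right-hand side; this is precisely what Lemma \ref{lem:inj_cx} lets us avoid, producing the tighter three-term estimate stated here. Establishing the homotopy splitting itself is routine once one notes that $\mathrm{ev}_{x_0}$ and its section are continuous homomorphisms of topological groups, so the fiber sequence becomes an algebraically split short exact sequence in every positive degree.
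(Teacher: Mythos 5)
Your argument is correct, and it reaches the same direct sum decomposition
$\pi_1(GL_m(C(\S^{n-1})\otimes A)) \cong \pi_1(GL_m(A)) \oplus \pi_n(GL_m(A))$
that drives the paper's proof, but by a different route. The paper handles $\inj_0(D)$ exactly as you do, via Lemma \ref{lem:inj_cx}; for $\surj_1(D)$ it applies the split exact sequence of C*-algebras $0\to C_0(\R^{n-1})\otimes A\to C(\S^{n-1})\otimes A\to A\to 0$ together with Thomsen's results: additivity of the homology theory $A\mapsto \pi_{n+1}(GL_m(A))$ and the identification $\pi_n(GL_k(A))\cong \pi_1(GL_k(C_0(\R^{n-1})\otimes A))$. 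You instead work directly with the topological incarnation of the same splitting: the evaluation fibration $C_{x_0}(\S^{n-1},GL_m(A))\to C(\S^{n-1},GL_m(A))\to GL_m(A)$, split by constants, plus the smash adjunction $\pi_j(\mathrm{Map}_*(\S^{n-1},GL_m(A)))\cong\pi_{j+n-1}(GL_m(A))$. This is more elementary and self-contained (it avoids invoking Thomsen's axiomatic framework), at the cost of two small points you should make explicit: in degree $j=1$ the split short exact sequence yields a \emph{direct} sum only because $\pi_1$ of a topological group is abelian (a split extension of groups is a priori only a semidirect product), and the decomposition must be natural with respect to stabilization, which follows since $\theta_D$ is postcomposition by $\theta_A$ and hence commutes with both evaluation and the constant section — you note this, and it is precisely the compatibility also needed (implicitly) in the paper's naturality claim. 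With those remarks, your bound $\surj_1(D)\leq\max\{\surj_1(A),\surj_n(A)\}$ together with $\inj_0(D)=\inj_{n-1}(A)$ gives the lemma.
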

\begin{proof}
Let $m\geq \max\{\surj_1(A), \surj_n(A), \inj_{n-1}(A)\}$. It follows from Lemma \ref{lem:inj_cx} that $\inj_{n-1}(A) = \inj_0(D)$, so $m\geq \inj_0(D)$. We have a split exact sequence
\[
0\to C_0(\R^{n-1})\otimes A\to C(\S^{n-1})\otimes A \to A\to 0
\]
Furthermore, by \cite[Lemma 2.3]{thomsen}, 
\[
\pi_n(GL_k(A)) \cong \pi_1(GL_k(C_0(\R^{n-1})\otimes A))
\]
By additivity of the functor $A\mapsto \pi_1(GL_k(A))$, there is a natural isomorphism
\[
\pi_1(GL_k(C(\S^{n-1}\otimes A))\cong \pi_n(GL_k(A))\oplus \pi_1(GL_k(A))
\]
for $k\in \{m,m-1\}$. Since $m\geq \max\{\surj_1(A), \surj_n(A)\}$, $m\geq \surj_1(D)$ as well.
\end{proof}

\begin{thm}\label{thm:csr_finite_dim}
Let $X$ be a compact Hausdorff space of dimension atmost $n$, then
\[
csr(C(X)\otimes A)\leq \max\{csr(A), \surj_k(A), \inj_{k-1}(A) : 1\leq k\leq n\}
\]
\end{thm}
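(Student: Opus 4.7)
The plan is to proceed in two stages: first reduce the theorem to the case of finite CW complexes of dimension at most $n$, then handle that case by induction on the number of cells using the pullback estimate from Theorem \ref{thm:csr_pullback} together with the auxiliary lemmas developed in this section.

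For the reduction step, I would invoke the standard fact from dimension theory that every compact Hausdorff space $X$ of covering dimension at most $n$ can be realized as an inverse limit $X = \varprojlim X_i$ of finite polyhedra of dimension at most $n$. Dualizing yields $C(X)\otimes A = \varinjlim (C(X_i)\otimes A)$ as a C*-inductive limit, and property (7) in Section \ref{subsec:stable_ranks} gives
\[
csr(C(X)\otimes A) \leq \liminf_i\, csr(C(X_i)\otimes A).
\]
Hence it suffices to establish the bound when $X$ is a finite CW complex of dimension at most $n$.

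For that case, I would induct on the number of cells in $X$. The base case (when $X$ is a single point) is immediate since $csr(C(X)\otimes A) = csr(A)$. For the inductive step, write $X = X_0 \cup_\varphi \D^k$ where $X_0$ is a finite CW complex of dimension at most $n$ with one fewer cell than $X$, and $\varphi : \S^{k-1}\to X_0$ is the attaching map of a $k$-cell with $1\leq k\leq n$. (When $k=0$ one interprets the attachment as disjoint union with a point and applies property (1) together with the inductive hypothesis.) By \cite[Lemma 1.4]{nagisa} this produces a pullback diagram of C*-algebras in which the restriction map $\gamma : C(\D^k)\otimes A \to C(\S^{k-1})\otimes A$ is surjective, so Theorem \ref{thm:csr_pullback} yields
\begin{align*}
csr(C(X)\otimes A) \leq \max\{ & csr(C(X_0)\otimes A),\; csr(C(\D^k)\otimes A), \\
& \inj_0(C(\S^{k-1})\otimes A),\; \surj_1(C(\S^{k-1})\otimes A)\}.
\end{align*}
The first term is controlled by the inductive hypothesis; the second equals $csr(A)$ since $\D^k$ is contractible and $csr$ is a homotopy invariant (Theorem \ref{thm:homotopy_dom}); the third equals $\inj_{k-1}(A)$ by Lemma \ref{lem:inj_cx}; and the fourth is bounded by $\max\{\surj_1(A), \surj_k(A), \inj_{k-1}(A)\}$ by Lemma \ref{lem:surj_restriction}. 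Since $1\leq k\leq n$, every one of these quantities lies in the set $\{csr(A),\, \surj_j(A),\, \inj_{j-1}(A) : 1\leq j\leq n\}$, closing the induction.

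The principal technical obstacle is the opening reduction: one must carefully invoke a nontrivial result from dimension theory to approximate a compact Hausdorff space of covering dimension $\leq n$ by finite polyhedra of the same dimension. Once this is in place, the remainder is a transparent combination of Theorem \ref{thm:csr_pullback}, the homotopy invariance of $csr$, and Lemmas \ref{lem:inj_cx} and \ref{lem:surj_restriction}.
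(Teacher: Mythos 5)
Your proposal is correct and follows essentially the same route as the paper: reduce to finite CW-complexes via inverse limits and property (7), then apply the pullback bound of Theorem \ref{thm:csr_pullback} cell by cell, using homotopy invariance for the disk factor together with Lemmas \ref{lem:inj_cx} and \ref{lem:surj_restriction}. Your induction on the number of cells (with cells of arbitrary dimension $k\leq n$) is, if anything, a slightly more careful bookkeeping of the same argument the paper phrases as induction on dimension.
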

\begin{proof}
If $X$ is a compact Hausdorff space of dimension atmost $n$, then $X$ is an inverse limit of compact metric spaces $(X_i)$ such that $\dim(X_i)\leq n$ \cite{mardesic}. Since $C(X)\otimes A\cong \lim C(X_i)\otimes A$, it follows from Remark \ref{rem:properties}, (7) that $csr(C(X)\otimes A)\leq \liminf csr(C(X_i)\otimes A)$. Furthermore, if $X$ is a compact metric space of dimension atmost $n$, then $X$ is an inverse limit of finite CW-complexes $(Y_i)$, such that $\dim(Y_i)\leq n$ \cite{freudenthal}. Once again, $csr(C(X)\otimes A))\leq \liminf C(Y_i)\otimes A$. Hence, it suffices to assume that $X$ is itself a finite CW-complex with $\dim(X)\leq n$. \\

By induction, we may assume that $X = X_0\cup_{\varphi} \D^n$ where $X_0$ is a finite CW-complex of dimension atmost $(n-1)$ and $\varphi : \S^{n-1}\to X_0$ is a continuous function. As mentioned at the start of this section, it follows that
\begin{align*}
csr(C(X)\otimes A)\leq & \max\{csr(C(X_0)\otimes A), csr(C(\D^n)\otimes A), \\ & \inj_0(C(\S^{n-1})\otimes A), \surj_1(C(\S^{n-1})\otimes A)\}
\end{align*}
By homotopy invariance, $csr(C(\D^n)\otimes A) = csr(A)$, so the result now follows by induction and Lemma \ref{lem:surj_restriction}
\end{proof}

We now turn our attention to a particularly tractable class of C*-algebras. Let $\mathcal{F}$ be the class of C*-algebras $A$ such that the map $\theta^m_A : GL_{m-1}(A)\to GL_m(A)$ induces a weak homotopy equivalence for all $m\geq 2$. The following algebras are known to be in $\mathcal{F}$.
\begin{itemize}
\item\cite{jiang} If $\mathcal{Z}$ denotes the Jiang-Su algebra, then $A\otimes \mathcal{Z} \in \mathcal{F}$ for any C*-algebra $A$. In particular, if $A$ is a separable, approximately divisible C*-algebra, then $A\cong A\otimes \mathcal{Z}$, so $A\in \mathcal{F}$. 
\item\cite{rieffel2} If $A$ is an irrational rotation algebra, then $A\in \mathcal{F}$.
\item\cite{thomsen} If $\mathcal{O}_n$ denotes the Cuntz algebra, then $A\otimes \mathcal{O}_n \in \mathcal{F}$ for any C*-algebra $A$.
\item\cite{thomsen} If $A$ is an infinite dimensional simple AF-algebra, then $A\otimes B\in \mathcal{F}$ for any C*-algebra $B$.
\item\cite{zhang} If $A$ is a purely infinite, simple C*-algebra, and $p$ any non-zero projection of $A$, then $pAp\in \mathcal{F}$.
\end{itemize}
Note that $A\in \mathcal{F}$ if and only if $\pi_n(Lc_m(A)) = 0$ for all $n\geq 0,$ and $m\geq 2$. Furthermore, if $A\in \mathcal{F}$, then, for all $n\geq 0$ and $m\geq 1$,
\[
\pi_n(GL_m(A)) \cong \begin{cases}
K_1(A) &: n \text{ even } \\
K_0(A) &: n \text{ odd}
\end{cases}
\]
In particular, the natural map $GL_1(A)/GL_1^0(A)\to K_1(A)$ is an isomorphism.
\begin{thm}\label{thm:gsr_csr_cxa}
Let $A\in \mathcal{F}$, and let $X$ be a compact Hausdorff space, then
\begin{align*}
gsr(C(X)\otimes A) &= gsr(A), \text{ and } \\
csr(C(X)\otimes A) &= \begin{cases}
csr(A) &: \text{ if } csr(A)\geq 2 \\
1 \text{ or } 2 &: \text{ if } csr(A) =1
\end{cases}
\end{align*}
\end{thm}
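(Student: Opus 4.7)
The lower bounds $gsr(C(X)\otimes A)\geq gsr(A)$ and $csr(C(X)\otimes A)\geq csr(A)$ are immediate from Remark~\ref{rem:split_epi}, via the split surjection given by evaluation at any point of $X$. For the matching upper bounds, the single structural input I use is the defining property of $\mathcal{F}$: $\pi_n(Lc_m(A))=0$ for all $n\geq 0$ and $m\geq 2$, i.e., $Lc_m(A)$ is weakly contractible whenever $m\geq 2$. Combined with the tautology $Lc_m(A)=Lg_m(A)$ for $m\geq gsr(A)$, this yields $Lg_m(A)\simeq \ast$ for every $m\geq \max\{gsr(A),2\}$ (and similarly for $csr(A)$).

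For the csr estimate, fix $m\geq \max\{csr(A),2\}$ and set $B=C(X)\otimes A$. Then $Lg_m(A)=Lc_m(A)$ is weakly contractible and (as an orbit of a Banach--Lie action of $GL_m(A)$) has the homotopy type of a CW-complex, so $[X,Lg_m(A)]=\{\ast\}$. Hence $Lg_m(B)\cong C(X,Lg_m(A))$ is path-connected, which gives $csr(B)\leq \max\{csr(A),2\}$. Combined with the lower bound this yields $csr(B)=csr(A)$ when $csr(A)\geq 2$ and $csr(B)\in\{1,2\}$ when $csr(A)=1$, as claimed.

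For the gsr estimate in the case $gsr(A)\geq 2$, fix $m\geq gsr(A)$ and any $\underline{a}\colon X\to Lg_m(A)=Lc_m(A)$. Weak contractibility of the base together with the homotopy lifting property of the principal bundle $t\colon GL_m(A)\to Lc_m(A)$ permit me to lift a null-homotopy of $\underline{a}$, starting from the constant section $x\mapsto I_m$ which covers $e_m$, to a map $S\colon X\to GL_m(A)$ satisfying $S(x)e_m=\underline{a}(x)$ for every $x\in X$. Viewing $S$ as an element of $GL_m(B)$, the element $T:=S^{-1}$ satisfies $T\underline{a}=e_m$, so $GL_m(B)$ acts transitively on $Lg_m(B)$; hence $gsr(B)\leq gsr(A)$.

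The remaining case $gsr(A)=1$ requires the extra observation that $A$ is then stably finite by Remark~\ref{rem:properties}(8), hence directly finite, so $Lg_1(A)=A^\times$. Passing to $C(X,-)$ this yields $Lg_1(B)=B^\times$, so $GL_1(B)$ trivially acts transitively on $Lg_1(B)$; for $m\geq 2$ the contractibility argument of the previous paragraph still applies (since $m\geq 2\geq gsr(A)$), and we obtain $gsr(B)\leq 1=gsr(A)$. The main delicate point in this scheme is the passage from weak contractibility of $Lg_m(A)$ to the triviality of $[X,Lg_m(A)]$ for an arbitrary compact Hausdorff $X$; it is handled by invoking the CW-homotopy type of $Lg_m(A)$ (open submanifold of a Banach space, or orbit of a Banach--Lie action) together with paracompactness of $X$, after which the principal-bundle HLP is the standard one.
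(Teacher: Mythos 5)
Your proposal is correct, and its csr half is essentially the paper's own argument: both use that for $A\in\mathcal{F}$ the space $Lc_m(A)$ is weakly contractible and of CW homotopy type, hence contractible by Whitehead, and then read off connectedness of $Lg_m(C(X)\otimes A)\cong C(X,Lg_m(A))$; the lower bounds via the evaluation split epimorphism are identical. The genuine difference is in the gsr half. The paper records the slightly sharper intermediate bound $csr(C(X)\otimes A)\le\max\{2,gsr(A)\}$ (only $m\ge gsr(A)$ is needed to have $Lg_m(A)=Lc_m(A)$), so for $gsr(A)\ge 2$ the gsr equality follows at once from $gsr\le csr$, and the case $gsr(A)=1$ is settled by finiteness of $C(X)\otimes A$ together with Remark \ref{rem:properties}(8). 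Your weaker bound $\max\{2,csr(A)\}$ does not permit that shortcut, so you argue directly: using the Corach--Larotonda bundle $t:GL_m(A)\to Lc_m(A)$ over the contractible base you lift each $\underline{a}\in C(X,Lg_m(A))$ to $S\in GL_m(C(X)\otimes A)$ with $S e_m=\underline{a}$, and at level $m=1$ (when $gsr(A)=1$) you use direct finiteness to get $Lg_1=GL_1$. This is sound; the one justification to tighten is that the homotopy lifting property comes from paracompactness (indeed metrizability) of the base $Lc_m(A)$, not of $X$: a locally trivial bundle over a metrizable base is a Hurewicz fibration, or, more simply, contractibility plus paracompactness of the base trivializes the principal bundle, and composing a global section with $\underline{a}$ produces $S$ immediately. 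As for what each route buys: the paper's is shorter, extracting both ranks from a single contractibility computation plus the finiteness swindle; yours exhibits the transitivity witness explicitly (in fact with $S\in GL_m^0$, since the lift runs along a homotopy starting at the constant section $I_m$) and does not require stating the csr bound in terms of $gsr(A)$.
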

\begin{proof} We first consider the connected stable rank: Since $A \in \mathcal{F}$, $\pi_n(Lc_m(A)) = 0$ for all $n\geq 0$ and $m\geq 2$. Since $Lc_m(A)$ is an open subset of a normed linear space \cite[Section 1]{rieffel2}, it is homotopy equivalent to a CW-complex \cite[Chapter IV, Corollary 5.5]{lundell}. By Whitehead's theorem \cite[Theorem 4.5]{hatcher}, it follows that $Lc_m(A)$ is contractible. Therefore, if $m\geq \max\{2,gsr(A)\}$, then $Lg_m(A) = Lc_m(A)$ is contractible. Identifying $Lg_m(C(X)\otimes A)$ with $C(X,Lg_m(A))$, we see that $\pi_0(Lg_m(C(X)\otimes A)) = [X,Lg_m(A)]$ is trivial. Thus,
\[
csr(C(X)\otimes A)\leq \max\{2,gsr(A)\}
\]
Now the result follows from the fact that $gsr(A) \leq csr(A)\leq csr(C(X)\otimes A)$. \\

For the general stable rank: By the first part of the argument, we have
\[
gsr(A) \leq gsr(C(X)\otimes A)\leq csr(C(X)\otimes A)\leq \max\{2,gsr(A)\}
\]
If $gsr(A)\geq 2$, there is nothing to prove. If $gsr(A) = 1$, then $A$ must be stably finite, and hence $C(X)\otimes A$ is finite. Since $gsr(C(X)\otimes A)\leq 2$, it must be that $gsr(C(X)\otimes A) = 1$ by Remark \ref{rem:properties} (8). This completes the proof.
\end{proof}

\begin{ex}\label{ex:gsr_csr_trivial}
Some examples illustrate our results:
\begin{enumerate}
\item If $A\in \mathcal{F}$ and $csr(A)=1$, then it is possible that $csr(C(X)\otimes A) = 2$, depending on $X$. For instance, if $A$ is a simple, infinite dimensional, unital AF-algebra, then $csr(A) = 1$. Taking $X = \T$, we see that $K_1(\T A) \cong K_0(A)\oplus K_1(A) \neq 0$ because $A$ is stably finite. Hence, $csr(\T A)=2$ by Remark \ref{rem:properties} (9).
\item If $A$ is an irrational rotation algebra, then $tsr(A) = 1$ and $K_1(A)\neq 0$, so $gsr(A) = 1$ and $csr(A) = 2$ by Remark \ref{rem:properties}, (9) and (10). Since $A$, and hence $C(X)\otimes A$, is finite, it follows that
\[
gsr(C(X)\otimes A) =1 \text{ and } csr(C(X)\otimes A) = 2
\]
for any compact Hausdorff space $X$. This was proved by Rieffel \cite[Proposition 2.5, 2.7]{rieffel} in the case where $X = \T^k$. In fact, these were crucial in proving that $A\in \mathcal{F}$.
\item If $A$ is a Kirchberg algebra, then $A\in \mathcal{F}$ by \cite{zhang}. Furthermore, it was proved by Xue \cite{xue2} that $gsr(A) = csr(A) = 2$ if and only if $A$ has the IBN property (Otherwise $gsr(A) = csr(A) = +\infty$). So if $A$ is a Kirchberg algebra with the IBN property, we can conclude that
\[
gsr(C(X)\otimes A) = csr(C(X)\otimes A) = 2
\]
for any compact Hausdorff $X$. In particular, this is true for $A = \mathcal{O}_{\infty}$.
\item If $A$ is a C*-algebra of real rank zero, then it has been proved in \cite[Lemma 2.2]{lin} that $\inj_0(A) = 1$. Hence, it follows from Theorem \ref{thm:csxa} that $gsr(\T A) = gsr(A)$. This is precisely the argument in \cite[Proposition 3.1]{xue}.
\end{enumerate}
\end{ex}


\section{Examples and Calculations}

We now turn to a few examples that have informed this investigation. 

\subsection{Commutative C*-algebras}

If $X$ and $Y$ are two compact Hausdorff spaces and $X\vee Y$ denotes their wedge sum, then $C(X\vee Y) \cong C(X)\oplus_{\C} C(Y)$ where the maps $C(X)\to \C$ and $C(Y)\to \C$ are the evaluation maps at the common base point. Hence, we get the following satisfying corollary to Theorems \ref{thm:gsr_pullback} and \ref{thm:csr_pullback}.
\begin{cor}\label{cor:gsr_wedge}
For any two compact Hausdorff spaces $X$ and $Y$,
\begin{equation*}
\begin{split}
gsr(C(X\vee Y)) &= \max\{gsr(C(X)),gsr(C(Y))\}, \text{ and }\\
csr(C(X\vee Y)) &= \max\{csr(C(X)), csr(C(Y))\}
\end{split}
\end{equation*}
\end{cor}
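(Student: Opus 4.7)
The plan is to realize $C(X \vee Y)$ as the pullback $C(X) \oplus_{\C} C(Y)$, where the two maps $C(X), C(Y) \to \C$ are the surjective evaluations at the common basepoint, and apply Theorems \ref{thm:gsr_pullback} and \ref{thm:csr_pullback}. The lower bounds come for free from split epimorphisms: the projection $C(X \vee Y) \to C(X)$ is split by $f \mapsto (f, f(x_0))$, and $C(X \vee Y) \to C(Y)$ is split analogously, so by Remark \ref{rem:properties}(5) both $gsr$ and $csr$ of $C(X \vee Y)$ dominate those of each factor, giving $\geq$ in the claimed identities.

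For the upper bound, I first compute the two auxiliary invariants appearing in the pullback theorems for $D = \C$. Since $GL_n(\C)$ is connected for every $n$, the pointed sets $[\S^0, GL_n(\C)]_{\ast}$ are trivial, so $\inj_0(\C) = 1$. For $\surj_1(\C)$, one either checks directly that $\pi_1(GL_{n-1}(\C)) \to \pi_1(GL_n(\C))$ is surjective for $n \geq 2$ but not for $n = 1$, or invokes Proposition \ref{prop:inj_surj_bounds} together with $csr(C(\T)) = 2$; either way $\surj_1(\C) = 2$. Because $K_1(\C) = 0$, the second half of Theorem \ref{thm:gsr_pullback} yields
\[
gsr(C(X \vee Y)) \leq \max\{gsr(C(X)), gsr(C(Y)), 1\},
\]
which, combined with the lower bound, gives the claimed identity for $gsr$. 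Similarly, Theorem \ref{thm:csr_pullback} gives $csr(C(X \vee Y)) \leq \max\{csr(C(X)), csr(C(Y)), 2\}$, matching the desired maximum as soon as $\max\{csr(C(X)), csr(C(Y))\} \geq 2$.

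The main obstacle is the boundary case for $csr$ when $csr(C(X)) = csr(C(Y)) = 1$, since the pullback bound alone only delivers $\leq 2$. To close this gap, I would argue directly that $Lg_m(C(X \vee Y))$ is connected for every $m \geq 1$. Applying Lemma \ref{lem:forgetful} with $A = \C$ (using $csr(\C) = gsr(C(\T)) = 1$), the forgetful map $[Z, Lg_m(\C)]_{\ast} \to [Z, Lg_m(\C)]$ is a bijection for every compact Hausdorff $Z$ and every $m \geq 1$; in particular, the hypothesis forces $[X, Lg_m(\C)]_{\ast} = [Y, Lg_m(\C)]_{\ast} = 0$. The basepoint-evaluation map $C(X \vee Y, Lg_m(\C)) \to Lg_m(\C)$ is a Serre fibration with fiber $C_{x_0}(X, Lg_m(\C)) \times C_{y_0}(Y, Lg_m(\C))$, and since $Lg_m(\C)$ is connected, the tail of the associated long exact sequence forces $\pi_0(Lg_m(C(X \vee Y))) = 0$. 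Hence $csr(C(X \vee Y)) = 1$ in this case as well, completing the proof.
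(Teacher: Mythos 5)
Your argument coincides with the paper's for everything except the boundary case: the pullback realization $C(X\vee Y)\cong C(X)\oplus_{\C}C(Y)$, the lower bounds via the split epimorphisms and Remark \ref{rem:properties}(5), the computations $\inj_0(\C)=1$, $\surj_1(\C)=2$, $K_1(\C)=0$, and the application of Theorems \ref{thm:gsr_pullback} and \ref{thm:csr_pullback} are exactly the paper's steps, so the $gsr$ identity and the case $\max\{csr(C(X)),csr(C(Y))\}\geq 2$ are handled identically. Where you genuinely diverge is the case $csr(C(X))=csr(C(Y))=1$. The paper exploits the bound $csr(C(X\vee Y))\leq 2$ so that only $m=1$ matters, converts free classes to based classes via Lemma \ref{lem:forgetful}, and then pastes two base-point preserving null-homotopies of $f|_X$ and $f|_Y$ by hand to show $[X\vee Y,\T]_{\ast}$ (hence $[X\vee Y,\T]$) is trivial; this needs no fibration or cofibration input. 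You instead treat all $m\geq 1$ at once through the base-point evaluation map $C(X\vee Y,Lg_m(\C))\to Lg_m(\C)$, claiming it is a Serre fibration with fiber $C_{x_0}(X,Lg_m(\C))\times C_{y_0}(Y,Lg_m(\C))$. That claim is the one step you assert without proof, and it is not automatic here: the statement is for arbitrary compact Hausdorff spaces, whose basepoints need not be nondegenerate, so the standard ``evaluation at a well-pointed basepoint is a fibration'' argument does not apply verbatim. It can be repaired --- $Lg_m(\C)=\C^m\setminus\{0\}$ is an open subset of $\C^m$, hence an absolute neighborhood extensor for normal spaces, and a Borsuk-type homotopy extension argument over the compact Hausdorff pairs involved gives the homotopy lifting property against cubes --- but this point-set work should be made explicit, or avoided altogether by the paper's pasting argument, which is shorter and needs only $m=1$. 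Modulo supplying that justification, your proof is correct, and it has the mild advantage of showing directly that $Lg_m(C(X\vee Y))$ is connected for every $m\geq 1$ without first invoking the bound $csr(C(X\vee Y))\leq 2$.
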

\begin{proof}
For the general stable rank: The inclusion map $\iota: X\hookrightarrow X\vee Y$ induces a surjection $\iota^{\ast} : C(X\vee Y)\to C(X)$. Furthermore, the `pinching' map $P: X\vee Y\to X$ that pinches $Y$ to the base point has the property that $\iota^{\ast} \circ P^{\ast} = \text{id}_{C(X)}$. So it follows from Remark \ref{rem:properties}, (5) that $gsr(C(X\vee Y))\geq gsr(C(X))$. By symmetry, the same true for $Y$, so
\[
gsr(C(X\vee Y))\geq \max\{gsr(C(X)), gsr(C(Y)\}
\]
Now observe that $K_1(\C) = 0$, and $\inj_0(\C) = 1$ so the result follows from Theorem \ref{thm:gsr_pullback}. \\

For the connected stable rank: The same argument as above shows that
\begin{align*}
\max\{csr(C(X)),csr(C(Y))\} &\leq csr(C(X\vee Y)) \\
&\leq \max\{csr(C(X)),csr(C(Y)),2\}
\end{align*}
where the second inequality follows from Theorem \ref{thm:csr_pullback} and the fact that $\surj_1(\C) = 2$. Thus, if $\max\{csr(C(X)),csr(C(Y))\} \geq 2$, then the conclusion follows. Suppose $csr(C(X)) = csr(C(Y)) = 1$. We must conclude that $csr(C(X\vee Y)) = 1$. By the above inequality, we know that $csr(C(X\vee Y))\leq 2$. Hence, it suffices to show that $Lg_1(C(X\vee Y))$ is connected. However,
\[
\pi_0(Lg_1(C(X\vee Y))) = \pi_0(C(X\vee Y, Lg_1(\C)) \cong [X\vee Y, \T]
\]
Since $csr(C(X)) = csr(C(Y)) = 1$, we know that $[X,\T]$ and $[Y,\T]$ are both trivial. Since $gsr(C(\T)) = csr(\C) = 1$, it follows from Lemma \ref{lem:forgetful} that $[X,\T]_{\ast}$ and $[Y,\T]_{\ast}$ are both trivial as well. If $f:X\vee Y\to \T$ is a map based at the identity, then $f\circ \iota : X\to \T$ must be null-homotopic. Similarly, if $j :Y\hookrightarrow X\vee Y$ denotes the inclusion map, then $f\circ j$ is also null-homotopic. Furthermore, the homotopies may be chosen to preserve the common base point, so we may paste the two homotopies together to conclude that $f$ is null-homotopic. Hence, $[X\vee Y,\T]_{\ast}$ is trivial. Once again, by Lemma \ref{lem:forgetful}, we conclude that $[X\vee Y, \T]$ is also trivial. Hence, $Lg_1(C(X\vee Y))$ is connected, whence $csr(C(X\vee Y)) = 1$ as required.
\end{proof}

Our next goal is determining $gsr(C(\T^d))$. To begin with, we have the following observation.

\begin{cor}\label{cor:t_times_x}
If $X$ is a compact Hausdorff space, then
\[
gsr(C(\T\times X)) = \max\{gsr(C(X)), gsr(C(\Sigma X))\}
\]
\end{cor}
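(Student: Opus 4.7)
My plan is to exploit the identification $\T = \Sigma \S^0$, which puts $C(\T\times X)$ in exactly the form required by Theorem~\ref{thm:csxa}. Since the reduced suspension of the pointed two-point space $\S^0$ is the circle, we have
\[
C(\T\times X) = C(\T)\otimes C(X) = C(\Sigma \S^0)\otimes C(X),
\]
so applying Theorem~\ref{thm:csxa} with the pointed compact Hausdorff space $\S^0$ and the unital C*-algebra $A := C(X)$ will yield
\[
gsr(C(\T\times X)) = \max\{gsr(C(X)),\, \inj_{\S^0}(C(X))\}.
\]

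What remains is to identify $\inj_{\S^0}(C(X))$ with $gsr(C(\Sigma X))$. Based homotopy classes of maps $\S^0\to Y$ for a pointed space $Y$ are in bijection with $\pi_0(Y)$ via the connected component of the image of the non-basepoint, so by the paper's notational convention $\inj_{\S^0}(C(X))$ agrees with $\inj_0(C(X))$. Lemma~\ref{lem:inj_cx} applied with $A=\C$ then gives $\inj_0(C(X)) = \inj_X(\C)$, and Equation~\ref{eqn:gsr_sx} identifies $\inj_X(\C)$ with $gsr(C(\Sigma X))$. Stringing these identifications together produces the desired equality.

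I do not anticipate any real obstacle: the whole argument is bookkeeping once one recognises $\T = \Sigma \S^0$. In particular, neither a separate lower bound computation nor the pullback and Milnor patching machinery developed earlier in the paper is needed, since Theorem~\ref{thm:csxa} delivers the sharp value of $gsr(C(\Sigma\S^0)\otimes C(X))$ directly. The only small point requiring care is verifying that the pointing conventions in $\inj_{\S^0}$ and $\inj_0$ line up, which they do by construction.
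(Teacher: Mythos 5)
Your argument is correct and is essentially the paper's own proof: the paper likewise writes $C(\T\times X)=\T A$ with $A=C(X)$, obtains $gsr(\T A)=\max\{gsr(A),\inj_0(A)\}$ from Theorem \ref{thm:csxa} (viewing $\T=\Sigma\S^0$, so $\inj_{\S^0}=\inj_0$), and then concludes via Lemma \ref{lem:inj_cx} and Equation \ref{eqn:gsr_sx}. Your identification of the pointing conventions for $[\S^0,\cdot]_{\ast}=\pi_0$ is the right reading of the paper's notation, so no further lower-bound or pullback argument is needed.
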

\begin{proof}
Note that $C(\T \times X) = \T A$ where $A = C(X)$, so by Theorem \ref{thm:gsr_csr_cxa},
\[
gsr(C(\T\times X)) = \max\{gsr(C(X)), \inj_0(C(X))\}
\]
By Lemma \ref{lem:inj_cx}, $\inj_0(C(X)) = \inj_X(\C)$, so the result follows from Equation \ref{eqn:gsr_sx}.
\end{proof}

Recall that a space $X$ is said to homotopically dominate $Y$ if there is are maps $P:X\to Y$ and $f:Y\to X$ such that $P\circ f\simeq \text{id}_Y$. If this happens, then $C(X)$ homotopically dominates $C(Y)$, so it follows from Theorem \ref{thm:homotopy_dom} that $gsr(C(X))\geq gsr(C(Y))$.

\begin{lemma}\label{lem:td_dominates}
If $X = \prod_{i=1}^k \S^{n_i}$, then $\Sigma X$ homotopically dominates $\S^{n+1}$ where $n = \sum_{i=1}^k n_i$. In particular, $\Sigma \T^n$ homotopically dominates $\S^{n+1}$
\end{lemma}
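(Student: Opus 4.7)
The plan is to use the classical James--Milnor splitting of suspensions of products: for pointed CW-complexes $Y_1, \ldots, Y_k$,
\[
\Sigma(Y_1 \times \cdots \times Y_k) \simeq \bigvee_{\emptyset \neq I \subseteq \{1,\ldots,k\}} \Sigma\Bigl(\bigwedge_{i \in I} Y_i\Bigr).
\]
The base case is the well-known equivalence $\Sigma(Y \times Z) \simeq \Sigma Y \vee \Sigma Z \vee \Sigma(Y \wedge Z)$, which comes from the fact that the cofibration $Y \vee Z \hookrightarrow Y \times Z$ splits after one suspension (the two projections $Y \times Z \to Y$ and $Y \times Z \to Z$ supply a retraction up to homotopy of the suspension). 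The general case follows by induction on $k$.

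Applying this splitting to $Y_i = \S^{n_i}$ and using the standard fact that the smash product of spheres is a sphere, $\S^{n_1} \wedge \cdots \wedge \S^{n_k} \simeq \S^{n_1 + \cdots + n_k} = \S^n$, the wedge summand indexed by $I = \{1,2,\ldots,k\}$ is precisely $\Sigma \S^n = \S^{n+1}$. Therefore we have a homotopy equivalence
\[
\Sigma X \simeq \S^{n+1} \vee Z
\]
for some (explicit) space $Z$. Let $f : \S^{n+1} \hookrightarrow \Sigma X$ be the inclusion of this wedge summand and $P : \Sigma X \to \S^{n+1}$ the retraction onto it; then $P \circ f = \mathrm{id}_{\S^{n+1}}$, so $\Sigma X$ homotopically dominates $\S^{n+1}$.

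For the ``in particular'' statement, take $k = n$ and all $n_i = 1$, so that $X = \S^1 \times \cdots \times \S^1 = \T^n$ and $\sum_i n_i = n$; the general result then yields that $\Sigma \T^n$ homotopically dominates $\S^{n+1}$.

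The main obstacle is purely one of exposition: the James--Milnor splitting is classical, so I would simply cite a standard reference (e.g.\ Hatcher's \emph{Algebraic Topology} or James's original paper) rather than reprove it. Everything else in the argument is either formal (extracting a wedge summand always produces a homotopy domination) or an elementary fact about spheres (the smash of two spheres is a sphere). No delicate point-set or C*-algebraic issues arise, since the statement is entirely topological.
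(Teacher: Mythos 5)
Your proof is correct and takes essentially the same approach as the paper: both split $\Sigma X$ via the suspension-of-a-product splitting (the paper inducts with Hatcher's Proposition 4I.1, you invoke the full James--Milnor wedge decomposition) to exhibit $\S^{n+1}$ as a wedge summand of $\Sigma X$, and then retract onto it. The only difference is cosmetic: you get $P\circ f\simeq \mathrm{id}_{\S^{n+1}}$ from the formal wedge-summand argument, whereas the paper keeps track of the dimension of the complementary summand and checks this via a degree computation in $H_{n+1}$; your route is, if anything, slightly cleaner.
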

\begin{proof}
We claim that
\[
\Sigma X \simeq \S^{n+1}\vee M
\]
for some manifold $M$ of dimension $\leq n$. To see this, we proceed by induction on $k$. It is clearly true if $k=1$, so let $Y = \prod_{i=1}^{k-1} \S^{n_i}$ and assume $\Sigma Y \simeq \S^{\ell+1}\vee N$,
where $\ell = \sum_{i=1}^{k-1} n_i$ and $N$ is a manifold of dimension $\leq \ell$. Then by \cite[Proposition 4I.1]{hatcher},
\begin{align*}
\Sigma X = \Sigma (Y\times \S^{n_k}) &\simeq \Sigma Y \vee \S^{n_k+1}\vee \Sigma (Y\wedge \S^{n_k}) \\
&\simeq \S^{\ell+1}\vee N \vee \S^{n_k+1} \vee \Sigma^{n_k}(\S^{\ell+1}\vee N)) \\
&\simeq \S^{\ell+1}\vee N \vee \S^{n_k+1}\vee \Sigma^{n_k}(N)\vee \S^{\ell+n_k+1} \simeq M\vee \S^{n+1}
\end{align*}
where $M = \S^{\ell+1}\vee N \vee \S^{n_k+1}\vee \Sigma^{n_k}(N)$. Note that
\[
\dim(M) \leq \max\{\ell+1,\ell,n_k+1, n_k+\ell\} \leq n_k + \ell = n
\]
This proves the claim. So we get a map $P : \Sigma X\to M\vee \S^{n+1} \to \S^{n+1}$ that `pinches' $M$ to a point, and a map $f: \S^{n+1} \to \S^{n+1}\vee M \to \Sigma X$ by composing the homotopy equivalence with the natural map $\S^{n+1}\to \S^{n+1}\vee M$. Note that $P_{\ast} : H_{n+1}(\Sigma X)\to H_{n+1}(\S^{n+1})$ is an isomorphism because $\dim(M)\leq n$, and $f_{\ast} : H_{n+1}(\S^{n+1}) \to H_{n+1}(\Sigma X)$ is also an isomorphism. Hence,
\[
(P\circ f)_{\ast} : H_{n+1}(\S^{n+1}) \to H_{n+1}(\S^{n+1})
\]
is an isomorphism. Since both $P$ and $f$ are orientation-preserving, it follows that $P \circ f$ has degree 1, and so $P \circ f\simeq \text{id}_{\S^{n+1}}$ as required.
\end{proof}

The following is an answer to a question posed by Nica \cite[Problem 5.8]{nica2}. Before we begin, we observe that if $X$ is a compact Hausdorff space whose covering dimension is $\leq 4$, then Nica has shown \cite[Proposition 5.5]{nica2} that $gsr(C(X)) = 1$. The point of this next example, thus, is using the previous lemma to compare $gsr(C(\T^d))$ and $gsr(C(\S^d))$ for $d\geq 5$. \\

To put this in perspective, if $X$ is a compact Hausdorff space of covering dimension $\leq n$, then $csr(C(X))\leq \lceil n/2\rceil + 1$ by \cite[Corollary 2.5]{nistor} (See also Corollary \ref{cor:nistor}). Furthermore, Nica has shown \cite[Theorem 5.3]{nica2} that this upper bound is attained provided the top cohomology group $H^{\text{odd}}(X)$ is non-vanishing. In particular, this implies that, for all $d\geq 1$,
\[
csr(C(\T^d)) = \left\lceil \frac{d}{2}\right\rceil + 1
\]

\begin{ex}\label{ex:gsr_td}
\[
gsr(C(\T^d)) = \begin{cases}
1 &: d\leq 4 \\
\lceil \frac{d}{2}\rceil + 1 &: d>4
\end{cases}
\]
\end{ex}
\begin{proof}
For $d\leq 4$, the result follows from the preceding discussion. For $d\geq 5$, we know that
\[
gsr(C(\T^d))\leq csr(C(\T^d))\leq \left\lceil \frac{d}{2}\right\rceil + 1
\]
so it suffices to prove the reverse inequality. We proceed by induction on $d$. For $d=5$, by Corollary \ref{cor:t_times_x} and Lemma \ref{lem:td_dominates},
\[
gsr(C(\T^5)) \geq gsr(C(\Sigma \T^4))) \geq gsr(C(\S^5))
\]
and $gsr(C(\S^5)) = 4$ by Example \ref{ex:gsr_suspension}. For $d\geq 6$, by induction
\begin{align*}
gsr(C(\T^d)) &= \max\{gsr(C(\T^{d-1}), gsr(C(\Sigma \T^{d-1})\} \\
&\geq \max\left\{\left\lceil \frac{d-1}{2}\right\rceil + 1, gsr(C(\S^d))\right\}
\end{align*}
Once again the result follows from Example \ref{ex:gsr_suspension}.
\end{proof}

\subsection{NonCommutative CW-Complexes}

As observed in Subsection \ref{subsection:csr_commutative}, a commutative C*-algebras whose spectrum is a finite CW-complex can be expressed as an (iterated) pullback. Noncommutative CW-complexes (NCCW complexes), first studied by Pedersen \cite{pedersen}, are meant to generalize this idea: A NCCW complex $A_0$ of dimension $0$ is a finite dimensional C*-algebra. A NCCW complex $A_k$ of dimension $k$ is described by a pullback
\[
\xymatrix{
A_k\ar[r]\ar[d] & A_{k-1}\ar[d] \\
C(\D^k)\otimes F_k\ar[r]^{\gamma} & C(\S^{k-1})\otimes F_k
}
\]
where $F_k$ is a finite dimensional C*-algebra, $A_{k-1}$ is an NCCW complex of dimension $(k-1)$, and $\gamma$ is the restriction map. If $F$ is a finite dimensional C*-algebra, then it follows from Remark \ref{rem:properties} that $csr(F) = 1$. Hence, $csr(A_0) = 1$ and $csr(C(\D^k)\otimes F_k) = csr(F_k) = 1$ by homotopy invariance. If $D = C(\S^{k-1})\otimes F_k$, then by Lemma \ref{lem:surj_restriction},
\[
\max\{\inj_0(D), \surj_1(D)\} \leq \max\{\surj_1(F_k), \surj_k(F_k), \inj_{k-1}(F_k)\}
\]
Write $F_k = \bigoplus_{i=1}^{n_k} M_{\ell_i}(\C)$, then $\surj_1(F_k) = 2$, so computing the right hand side boils down to asking whether, for all $1\leq i\leq n_k$, the map
\begin{equation*}
\begin{split}
\pi_k(GL_{\ell_i(m-1)}(\C))\to \pi_k(GL_{\ell_i m}(\C)) &\text{ is surjective, and} \\
\pi_{k-1}(GL_{\ell_i(m-1)}(\C))\to \pi_{k-1}(GL_{\ell_i m}(\C)) &\text{ is injective}
\end{split}
\end{equation*}
By Bott periodicity, these maps are isomorphisms if $k\leq 2\ell_i(m-1)-1$ (See, for instance, \cite[Page 251-254]{james}). Furthermore, if $k = 2\ell_i(m-1)$, then both conditions are satified because the second map is an isomorphism, and $\pi_k(GL_{\ell_i m}(\C)) = 0$. So if $d_k = \min\{\ell_i : 1\leq i\leq j_k\}$, then we have
\[
\max\{\inj_0(D), \surj_1(D)\} \leq \left\lceil \frac{k}{2d_k}\right\rceil + 1
\]
The following estimate is thus a corollary of Theorem \ref{thm:csr_pullback}
\begin{thm}\label{thm:nccw_complex}
Let $A_n$ be an NCCW complex of topological dimension atmost $n$ whose structure can be described as above. Then
\[
csr(A_n) \leq \max_{1\leq k\leq n} \left\lbrace \left\lceil \frac{k}{2d_k}\right\rceil + 1\right\rbrace \leq \left\lceil \frac{n}{2}\right\rceil + 1
\]
\end{thm}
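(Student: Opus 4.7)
The plan is to prove the estimate by induction on the dimension $k$, exploiting the recursive pullback description of an NCCW complex together with the already-established estimate from Theorem \ref{thm:csr_pullback} and the computation of $\max\{\inj_0(D),\surj_1(D)\}$ carried out in the paragraph preceding the statement. Essentially, all of the hard work is done: the proof is a bookkeeping exercise.

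First, I would set up the base case: for $k=0$, $A_0$ is a finite dimensional C*-algebra, hence $tsr(A_0)=1$, so $csr(A_0)=1$, which is trivially bounded by the right-hand side. For the inductive step, assume the bound holds for $A_{k-1}$. Apply Theorem \ref{thm:csr_pullback} to the defining pullback diagram
\[
\xymatrix{
A_k\ar[r]\ar[d] & A_{k-1}\ar[d] \\
C(\D^k)\otimes F_k\ar[r]^{\gamma} & C(\S^{k-1})\otimes F_k
}
\]
noting that $\gamma$ is surjective. This gives
\[
csr(A_k)\leq \max\{csr(A_{k-1}),\, csr(C(\D^k)\otimes F_k),\, \inj_0(D),\, \surj_1(D)\},
\]
where $D=C(\S^{k-1})\otimes F_k$. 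Since $\D^k$ is contractible, $C(\D^k)\otimes F_k$ is homotopy equivalent to $F_k$, and by Theorem \ref{thm:homotopy_dom} combined with the fact that $csr(F_k)=1$ (as $F_k$ is finite dimensional), we obtain $csr(C(\D^k)\otimes F_k)=1$.

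Next I would invoke the computation already worked out in the discussion immediately preceding the theorem statement: using Lemma \ref{lem:surj_restriction} and Bott periodicity for $GL_m(\C)$, one obtains
\[
\max\{\inj_0(D),\surj_1(D)\}\leq \left\lceil \frac{k}{2d_k}\right\rceil + 1,
\]
where $d_k=\min\{\ell_i:1\leq i\leq n_k\}$ for $F_k=\bigoplus_{i=1}^{n_k}M_{\ell_i}(\C)$. Substituting this bound and the inductive hypothesis on $csr(A_{k-1})$ into the pullback estimate yields
\[
csr(A_k)\leq \max_{1\leq j\leq k}\left\{\left\lceil \frac{j}{2d_j}\right\rceil + 1\right\},
\]
completing the induction.

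Finally, the second inequality is immediate: since $d_k\geq 1$ and $k\leq n$, each term $\lceil k/(2d_k)\rceil + 1$ is bounded above by $\lceil n/2\rceil + 1$. There is no substantial obstacle here — the genuine content of the theorem lies in Theorem \ref{thm:csr_pullback}, Lemma \ref{lem:surj_restriction}, and the Bott periodicity computation, all of which are already in hand. The only thing requiring a moment's attention is verifying that the induction closes cleanly, i.e. that the max over $1\leq j\leq k-1$ combined with the single new term at level $k$ produces the max over $1\leq j\leq k$.
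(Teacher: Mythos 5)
Your proposal is correct and follows essentially the same route as the paper: the paper derives the estimate as a direct corollary of Theorem \ref{thm:csr_pullback}, using the computation $\max\{\inj_0(D),\surj_1(D)\}\leq \lceil k/(2d_k)\rceil+1$ via Lemma \ref{lem:surj_restriction} and Bott periodicity, together with $csr(A_0)=1$ and $csr(C(\D^k)\otimes F_k)=1$; your write-up merely makes the implicit induction over the iterated pullback structure explicit.
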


A special case of this theorem is that of a commutative C*-algebra whose spectrum is a finite CW-complex. As in the proof of Theorem \ref{thm:csr_finite_dim}, by passing to inductive limits we obtain yet another proof of a result due to Nistor.
\begin{cor}\cite[Corollary 2.5]{nistor}\label{cor:nistor}
If $X$ is a compact Hausdorff space of dimension atmost $n$, then $csr(C(X))\leq \lceil n/2\rceil + 1$
\end{cor}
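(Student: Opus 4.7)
The plan is to deduce the corollary from Theorem \ref{thm:nccw_complex} by viewing $C(X)$ as an iterated pullback along the cellular skeleta, first in the finite CW-complex case and then by an inductive-limit reduction.

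First I would reduce to the case of a finite CW-complex, following the argument already used in the proof of Theorem \ref{thm:csr_finite_dim}. By the theorem of Marde\v{s}i\'c, any compact Hausdorff space $X$ of covering dimension at most $n$ can be written as an inverse limit of compact metric spaces $(X_i)$ with $\dim(X_i)\leq n$; by the Freudenthal theorem each such $X_i$ is in turn an inverse limit of finite CW-complexes $(Y_{ij})$ with $\dim(Y_{ij})\leq n$. Then $C(X)$ is isomorphic to the inductive limit of the system $\{C(Y_{ij})\}$, and by Remark \ref{rem:properties}(7),
\[
csr(C(X)) \leq \liminf_{i,j} csr(C(Y_{ij})).
\]
Hence it suffices to prove the estimate under the additional assumption that $X$ is itself a finite CW-complex with $\dim(X)\leq n$.

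Next, I would observe that when $X$ is a finite CW-complex of dimension at most $n$, the algebra $C(X)$ is a commutative NCCW complex of topological dimension at most $n$ in the sense of Pedersen: the skeletal filtration $X_0\subset X_1\subset \ldots \subset X_n = X$ obtained by attaching $k$-cells gives, via \cite[Lemma 1.4]{nagisa}, the pullback diagrams
\[
\xymatrix{
C(X_k)\ar[r]\ar[d] & C(X_{k-1})\ar[d] \\
C(\D^k)\otimes F_k\ar[r]^{\gamma} & C(\S^{k-1})\otimes F_k
}
\]
where $F_k = \C^{m_k}$ is the finite-dimensional commutative C*-algebra indexed by the $k$-cells, and $C(X_0)$ is itself finite-dimensional. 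Because each $F_k$ is commutative, every simple summand has matrix size $\ell_i = 1$, so the parameter $d_k$ appearing in Theorem \ref{thm:nccw_complex} satisfies $d_k = 1$ for all $k$.

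Finally, applying Theorem \ref{thm:nccw_complex} directly yields
\[
csr(C(X)) \leq \max_{1\leq k\leq n}\left\lbrace \left\lceil \frac{k}{2d_k}\right\rceil + 1\right\rbrace = \left\lceil \frac{n}{2}\right\rceil + 1,
\]
completing the proof. There is no real obstacle here beyond unpacking the definitions; the work has been done in setting up Theorem \ref{thm:nccw_complex} and in recognizing that for commutative coefficients the matrix-size parameter $d_k$ collapses to $1$, so that the estimate of Theorem \ref{thm:nccw_complex} recovers precisely Nistor's bound.
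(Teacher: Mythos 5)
Your proposal is correct and matches the paper's own argument: the paper likewise observes that $C(X)$ for a finite CW-complex is a commutative NCCW complex (so $d_k=1$ in Theorem \ref{thm:nccw_complex}) and handles general compact Hausdorff $X$ by the same Marde\v{s}i\'c--Freudenthal inductive-limit reduction used in the proof of Theorem \ref{thm:csr_finite_dim}. Nothing essential differs.
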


\subsection*{Acknowledgements}
The author was supported by SERB (Grant No. YSS/2015/001060). The author would also like to thank Prof. Thomsen for kindly providing him a copy of \cite{thomsen}. 

\bibliographystyle{plain}
\bibliography{mybib}

\begin{thebibliography}{10}

\bibitem{brown}
Lawrence~G Brown and Gert~K Pedersen.
\newblock Limits and {$C^*$}-algebras of low rank or dimension.
\newblock {\em Journal of Operator Theory}, pages 381--417, 2009.

\bibitem{corach}
Gustavo Corach and Angel~R Larotonda.
\newblock A stabilization theorem for {B}anach algebras.
\newblock {\em Journal of Algebra}, 101(2):433--449, 1986.

\bibitem{freudenthal}
Hans Freudenthal.
\newblock Entwicklungen von {R}{\"a}umen und ihren {G}ruppen.
\newblock {\em Compositio math}, 4(145-234):25, 1937.

\bibitem{hatcher}
Allen Hatcher.
\newblock {\em Algebraic topology. 2002}, volume 606.
\newblock 2002.

\bibitem{jiang}
Xinhui Jiang.
\newblock Nonstable {K}-theory for {$\mathcal{Z}$}-stable {$C^*$}-algebras.
\newblock {\em arXiv preprint math/9707228}, 1997.

\bibitem{lin}
Huaxin Lin.
\newblock Approximation by normal elements with finite spectra in
  {$C^*$}-algebras of real rank zero.
\newblock {\em Pacific Journal of Mathematics}, 173(2):443--489, 1996.

\bibitem{james}
Albert~T Lundell.
\newblock Concise tables of {J}ames numbers and some homotopy of classical lie
  groups and associated homogeneous spaces.
\newblock In {\em Algebraic Topology Homotopy and Group Cohomology}, pages
  250--272. Springer, 1992.

\bibitem{lundell}
Albert~T Lundell and Stephen Weingram.
\newblock {\em The topology of CW complexes}.
\newblock Springer Science \& Business Media, 2012.

\bibitem{magurn}
Bruce~A Magurn.
\newblock {\em An algebraic introduction to {K}-theory}, volume~87.
\newblock Cambridge University Press, 2002.

\bibitem{mardesic}
Sibe Marde{\v{s}}i{\'c} et~al.
\newblock On covering dimension and inverse limits of compact spaces.
\newblock {\em Illinois Journal of Mathematics}, 4(2):278--291, 1960.

\bibitem{milnor}
John Milnor.
\newblock {\em Introduction to Algebraic {K}-Theory.(AM-72)}, volume~72.
\newblock Princeton University Press, 2016.

\bibitem{nagisa}
Masaru Nagisa, Hiroyuki Osaka, and N~Christopher Phillips.
\newblock Ranks of algebras of continuous {$C^*$}-algebra valued functions.
\newblock In {\em Canad. J. Math}. Citeseer, 2001.

\bibitem{nica2}
Bogdan Nica.
\newblock Homotopical stable ranks for {B}anach algebras.
\newblock {\em Journal of Functional Analysis}, 261(3):803--830, 2011.

\bibitem{nistor}
Victor Nistor.
\newblock Stable range for tensor products of extensions of {$\mathcal{K}$} by
  {$C(X)$}.
\newblock {\em Journal of Operator Theory}, pages 387--396, 1986.

\bibitem{pedersen}
Gert~K Pedersen.
\newblock Pullback and pushout constructions in {$C^*$}-algebra theory.
\newblock {\em Journal of functional analysis}, 167(2):243--344, 1999.

\bibitem{rieffel}
Marc~A Rieffel.
\newblock Dimension and {S}table rank in the {K}-theory of {$C^*$}-algebras.
\newblock {\em Proceedings of the London Mathematical Society}, 3(2):301--333,
  1983.

\bibitem{rieffel2}
Marc~A Rieffel.
\newblock The homotopy groups of the unitary groups of non-commutative tori.
\newblock {\em Journal of Operator Theory}, pages 237--254, 1987.

\bibitem{rieffel3}
Marc~A Rieffel.
\newblock Projective modules over higher-dimensional noncommutative tori.
\newblock {\em Canad. J. Math}, 40(2):257--338, 1988.

\bibitem{schochet}
Claude Schochet.
\newblock Topological methods for {$C^*$}-algebras: {III}. {A}xiomatic
  homology.
\newblock {\em Pacific Journal of Mathematics}, 114(2):399--445, 1984.

\bibitem{sheu}
AJ-L Sheu.
\newblock A cancellation theorem for projective modules over the group
  {$C^*$}-algebras of certain nilpotent {L}ie groups.
\newblock {\em Canad. J. Math.}, 39:365--427, 1987.

\bibitem{thomsen}
Klaus Thomsen.
\newblock Nonstable {K}-theory for operator algebras.
\newblock {\em K-theory}, 4(3):245--267, 1991.

\bibitem{xue2}
Yifeng Xue.
\newblock The connected stable rank of the purely infinite simple
  {$C^*$}-algebras.
\newblock {\em Proceedings of the American Mathematical Society},
  127(12):3671--3676, 1999.

\bibitem{xue}
Yifeng Xue.
\newblock The general stable rank in nonstable {K}-theory.
\newblock {\em The Rocky Mountain Journal of Mathematics}, pages 761--775,
  2000.

\bibitem{zhang}
Shuang Zhang.
\newblock Matricial structure and homotopy type of simple {$C^*$}-algebras with
  real rank zero.
\newblock {\em Journal of Operator Theory}, pages 283--312, 1991.

\end{thebibliography}
\end{document}